\documentclass[11pt,reqno]{amsart}
\usepackage{amsmath, amsfonts, amsthm, amssymb,amscd, graphicx, amscd}
\usepackage{float,epsf}
\usepackage[english]{babel}
\usepackage{enumerate}
\usepackage{tikz}
\usepackage{mathrsfs}
\usepackage[numbers,sort&compress]{natbib}

\usepackage{srcltx}
\usepackage{geometry}
\usepackage{verbatim}
\usepackage{mathrsfs}
\usepackage{hyperref}
\usepackage{enumitem} 
\usepackage{bbm} 
\usepackage{stmaryrd}

\usepackage{relsize}
\usepackage{exscale}

\usepackage{mathtools}

\usepackage{mathtools}

\newtheorem{thm}{Theorem}[section]

\newtheorem{prop}[thm]{Proposition}
\newtheorem{lem}[thm]{Lemma}

\newtheorem{rem}[thm]{Remark}

\numberwithin{equation}{section}

\def\dd{{\rm d}}
\hypersetup{bookmarksdepth=2}

\begin{document}
\title[Supercritical Mean--field Limit]{Supercritical mean--field limit for magnetized Hamiltonian dynamics}
\author{Immanuel Ben-Porat}
\address{Immanuel Ben Porat, The University of Texas at Austin, Austin, TX 78712, USA.}
\email{immanuel.benporat@austin.utexas.edu}

\author{Gui-Qiang G. Chen}
\address{Gui-Qiang G. Chen, Mathematical Institute, University of Oxford, Oxford OX2 6GG, UK.}
\email{gui-qiang.chen@maths.ox.ac.uk}

\author{Difan Yuan}
\address{Difan Yuan,
School of Mathematical Sciences and Laboratory of
Mathematics and Complex Systems, Beijing Normal University.}
 \email{yuandf@amss.ac.cn}

\keywords{Mean-field derivation, Quasi-neutral limit, Modulated energy, Incompressible Euler equations, Magnetized plasma}
\subjclass[2010]{35Q83, 82B40, 82D10, 35Q35, 35Q70, 35Q82}

\begin{abstract}
The magnetized Vlasov--Poisson equation is a fundamental kinetic model for collisionless plasmas. We establish its quasi-neutral limit in two dimensions in the presence of a spatially inhomogeneous and time-dependent external magnetic field. In the same setting, we also establish the combined mean-field and quasi-neutral limit of the associated repulsive Coulomb particle system, thereby extending the results of \cite{han2021newton}. Both limits lead to the incompressible Euler equations with Lorentz forcing. A key structural feature is the pointwise skew-symmetry of the Lorentz operator, which produces exact cancellations in the modulated-energy estimates and is crucial for controlling the limiting dynamics. We further derive a closed vorticity formulation in which the sum of the fluid vorticity and the magnetic field is transported by the flow, coupled to an evolution equation for the spatial mean velocity. Finally, we establish global 
well-posedness of the resulting limiting system in the Lipschitz class.
\end{abstract}

\date{\today}
\maketitle
\section{Introduction}
We are concerned with rigorously justifying the quasi-neutral limit for magnetized plasmas, in particular the quasi-neutral limit for the Vlasov-Poisson equation with a non-uniform and 
time-dependent external magnetic field, as well as the combined mean-field quasi-neutral 
limit of Newton's second law with a Lorentz force.

\subsection{The magnetized Vlasov-Poisson equation.} 
A plasma is an ionized gas, {\it i.e.,} a system of interacting charged particles. Denoting by $(X(t,x,\xi),\Xi(t,x,\xi))$ the position and the velocity 
of a particle with initial position $x$ and initial  velocity $\xi$ and assuming that a planar Lorentz force $\mathbf{F}=-\mathbf{E}- B\Xi^{\perp}$ is exerted on the particles, then Newton's second law specified for $(X(t,x,\xi),\Xi(t,x,\xi))$ writes 
\begin{align*}
\begin{cases}
\frac{\dd}{\dd t}X(t,x,\xi)=\Xi(t,x,\xi), & X(0,x,\xi)=x,\\[1mm]
\frac{\dd}{\dd t}\Xi(t,x,\xi)=-\mathbf{E}(t,X(t,x,\xi))- B(t,X(t,x,\xi))\Xi^{\perp}(t,x,\xi),& \Xi(0,x,\xi)=\xi. 
\end{cases}    
\end{align*}
Suppose further that $f$ designates the number of particles per unit volume with position $x$ and velocity $\xi$ at time $t$. 
Assuming that no collisions occur means that 
$$
\frac{\dd }{\dd t}f(t,X(t,x,\xi),\Xi(t,x,\xi))=0,
$$ 
which, by the chain rule, implies 
\begin{align*}
\partial_{t}f+\Xi\cdot \nabla_{x}f-(\mathbf{E}+B\Xi^{\perp})\cdot\nabla_{\xi}f=0.  \label{}   
\end{align*}
This is the Vlasov equation -- we refer to \cite[Chapter 7]{chen2015introduction}  for an extensive discussion of its importance in Plasma Physics. The \textit{magnetized Vlasov-Poisson} equation, written in the two-dimensional ($2D$) periodic settings, is a nonlocal version of the Vlasov equation 
in which the electric force $\mathbf{E}$ is given self-consistently in terms of the distribution $f$ by coupling a  transport equation to a Poisson equation: 
\begin{align}\tag{VPB}
\begin{cases}
\partial_{t}f+\xi\cdot \nabla_{x}f-(\nabla_{x}\Phi+ B\xi^{\perp})\cdot\nabla_{\xi} f=0, \qquad f(0,\cdot)=f^{\mathrm{in}},\\[1mm]
\Delta_{x} \Phi=1-\rho,
\end{cases}
\label{magnetized vp}
\end{align}
with $\rho(t,x)=\int_{\mathbb{R}^{2}} f(t,x,\xi)\,\dd \xi$,
where $B:[0,T]\times \mathbb{T}^{2}\rightarrow \mathbb{R}$ is a time-dependent 
scalar field, the unknown is a time dependent probability density $f(t,x,\xi)=f(t,\cdot)\in \mathcal{P}(\mathbb{T}^{2}\times \mathbb{R}^{2})$, 
and we have abbreviated by $\xi^{\perp}$ the rotation of $\xi$ by $\pi/2$, 
that is, $\xi^{\perp}=(-\xi_{2},\xi_{1})$. 
In this paper, we are concerned with the \textit{quasi-neutral limit}  of \eqref{magnetized vp} and the \textit{supercritical mean field limit} 
of Newton's second law with a Lorentz force, which is the microscopic dynamics associated with \eqref{magnetized vp}. 

\subsection{The quasi-neutral and supercritical mean field limits}
Consider now the magnetized Vlasov-Poisson equation with the following scaling, known as the \textit{quasi-neutral scaling}:
\begin{align}\tag{VPB$\varepsilon$}
\begin{cases}
\partial_{t}f_{\varepsilon}+\xi\cdot \nabla_{x}f_{\varepsilon}-(\nabla_{x}\Phi_{\varepsilon}+ B\xi^{\perp})\cdot\nabla_{\xi} f_{\varepsilon}=0, \qquad\, f_{\varepsilon}(0,\cdot)=f^{\mathrm{in}}_{\varepsilon},\\
\varepsilon\Delta_{x} \Phi_{\varepsilon}=1-\rho_{\varepsilon},
\end{cases}
\label{magnetized vp with quasineutral scaling}
\end{align}
with $\rho_{\varepsilon}(t,x)=\int_{\mathbb{R}^{2}} f_{\varepsilon}(t,x,\xi)\ \dd \xi$. 
It is instructive at this stage to comment about the physical interpretation of the quantities involved in the above equation:
\begin{itemize}
     \item[{\rm(i)}] $\rho_{\varepsilon}$ is the density of  the electrons. 
   \item[{\rm(ii)}] $1$ in the Poisson equation
   is the stationary density of the ions. 
   \item[{\rm(iii)}] $\varepsilon$ is the electric permittivity constant, which measures how close the density of the electrons is to the density of the ions.  
\end{itemize}
Under uniform bounds that control $\varepsilon\Delta\Phi_\varepsilon$, one expects the electron density to approach the ion density as $\varepsilon\to0$. This limit regime is referred to as the quasi-neutral limit. 
Since plasmas with $0$ total charge exhibit a fluid-like behavior, 
the quasi-neutral limit can be thought of as a hydrodynamical limit. 

To understand in more detail how to link the Vlasov-Poisson equation with a fluid equation, we  define the \textit{current} to be the following averaged quantity 
\begin{align}
J_{\varepsilon}(t,x):=\int_{\mathbb{R}^{2}}\xi f_{\varepsilon}(t,x,\xi)\ \dd \xi, \label{definition of current}    
\end{align}
and assume momentarily that $B\equiv 0$. Assume formally that $\rho_\varepsilon>0$ and set $u_\varepsilon=J_\varepsilon/\rho_\varepsilon$.  The monokinetic ansatz $f_\varepsilon(t,x,\xi)=\rho_\varepsilon(t,x)\delta(\xi-u_\varepsilon(t,x))$ satisfies the kinetic equation precisely when the following system of equations is satisfied: 
\begin{align*}
\begin{cases}
\partial_{t}\rho_{\varepsilon}+\mathrm{div}_{x}J_{\varepsilon}=0,\\
\partial_{t}J_{\varepsilon}+\mathrm{div}_{x}(\frac{J_{\varepsilon}\otimes J_{\varepsilon}}{\rho_{\varepsilon}})+\rho_{\varepsilon}\nabla_{x}\Phi_{\varepsilon}=0,\\
\varepsilon\Delta_{x}\Phi_{\varepsilon}=1-\rho_{\varepsilon}. 
\end{cases}    
\end{align*}
Formally, assuming the uniform bounds strong enough to pass 
to the limit and well-prepared monokinetic data, 
then the vanishing limit $\varepsilon\to0$ 
yields the incompressible Euler equations:
\begin{align}\tag{E}
\begin{cases}
\partial_{t}v+\mathrm{div}_{x}(v\otimes v)+\nabla_{x} p=0, \\
\mathrm{div}_{x}v=0. 
\end{cases}  
\label{incompressible Euler}
\end{align}
Thus, when $B=0$, it is expected 
that 
$\rho_{\varepsilon}\underset{\varepsilon \rightarrow 0}{\rightharpoonup}1$
and $J_{\varepsilon}\underset{\varepsilon \rightarrow 0}{\rightharpoonup}v$.

\smallskip
Our second main objective is to study the supercritical mean field limit, which can be viewed as a \textit{combined quasi-neutral mean field limit}. More specifically, consider the microscopic dynamics associated with \eqref{magnetized vp with quasineutral scaling}, that is, the following system of $2\times 2N$ ODEs: 
\vspace{2pt}
\begin{align}\tag{NB$\varepsilon$}
 \begin{cases}
\frac{\dd}{\dd t}x_{i}^{N}(t)=\xi_{i}^{N}(t), &\,\, x_{i}^{N}(0)=x_{i}^{N,0},\\[1mm]
\frac{\dd}{\dd t}\xi_{i}^{N}(t)=-B(t,x_{i}^{N}(t))(\xi_{i}^{N}(t))^{\perp}-\frac{1}{N\varepsilon}\underset{j:j\neq i}{\sum}\nabla_{x} V(x_{i}^{N}(t)-x_{j}^{N}(t)),
&\,\,\xi_{i}^{N}(0)=\xi_{i}^{N,0}. 
\end{cases} 
\label{trajectories intro}
\end{align}
Here $V$ is the Green kernel on the torus, {\it i.e.}, the solution of 
$$
\Delta_{x}V=1-\delta_{0} 
\quad \mbox{on $\mathbb{T}^{2}\qquad$  \text{with} $\int_{\mathbb{T}^{2}}V(x)\, \dd x=0$}.$$

It is customary to refer to $V$ as the $2D$ periodic Coulomb interaction. 
The system \eqref{trajectories intro} of ODEs is Newton's second law written for each particle in the plasma, in contrast with the Vlasov-Poisson equation which provides a macroscopic description of the plasma. 
Thus, such a limit is well motivated from the physical point of view. Mathematically, the link between \eqref{trajectories intro} and \eqref{magnetized vp with quasineutral scaling} is provided via the \textit{empirical measure}: 
given a solution $(x_{1}(t),\cdots\!,x_{N}(t),\xi_{1}(t),\cdots\!,\xi_{N}(t))$ to \eqref{trajectories intro} (for brevity, we henceforth omit the superscript $N$), 
the empirical measure is the time-dependent probability measure defined by   
\begin{align*}
\mu_{N}^{\varepsilon}(t):=\frac{1}{N}\sum_{i=1}^{N}\delta_{(x_{i}(t),\xi_{i}(t))}\in \mathcal{P}(\mathbb{T}^{2}\times \mathbb{R}^{2}),  
\end{align*}
and is an exact solution to \eqref{magnetized vp with quasineutral scaling}. Similar to the above, the density and current are denoted by ($\rho_{\varepsilon,N},J_{\varepsilon,N}$), and defined by  
\begin{align}
\rho_{\varepsilon,N}(t,\dd x):=\frac{1}{N}\sum_{i=1}^{N}\delta_{x_{i}(t)}(\dd x),\qquad
J_{\varepsilon,N}(t,\dd x):=\frac{1}{N}\sum_{i=1}^{N}\xi_{i}(t)\delta_{x_{i}(t)}(\dd x).\label{discrete density and current}    
\end{align}

Again, taking $B=0$ and invoking the same considerations as before, we expect
$\rho_{\varepsilon,N}\rightharpoonup1$ and $J_{\varepsilon,N}\rightharpoonup v$
as $\varepsilon+N^{-1}\to0$, where $v$ solves system \eqref{incompressible Euler}.

\subsection{Previous literature} 
The rigorous derivation of the incompressible Euler equations as a quasi-neutral limit from the Vlasov-Poisson system has been proved by Brenier-Grenier \cite{brenier1994limite} and Brenier  \cite{brenier2000convergence} via the modulated energy method. For spatially analytic data, Grenier
\cite{grenier1996oscillations} studied quasineutral asymptotics
and the associated high-frequency plasma oscillations.
Masmoudi \cite{masmoudi2001vlasov} extended the cold-electron
quasineutral limit beyond well-prepared initial data,
on time intervals where the limiting incompressible Euler
solution remains regular. Later on,  Golse-Saint-Raymond \cite{golse2003vlasov} studied this limit in the combined gyrokinetic quasi-neutral regime, meaning in the presence of a constant magnetic field and with a different scaling. The quasi-neutral limit of the ionic Vlasov-Poisson equation, in which the Vlasov equation is coupled to a Poisson-Boltzmann-type equation, is also interesting, as it reduces to the equations of compressible fluids. 
We refer to \cite{han2011quasineutral, han2014quasineutral} for the derivation of the isothermal Euler equations from the Vlasov-Poisson equation for ions. 
We also mention \cite{ben2025derivation} for the study of the quasi-neutral limit in the case of velocity fields with low regularity and  \cite{rosenzweig2021quantum, puel2002convergence,jungel2003convergence} for combined quasi-neutral semi-classical limits, leading from dispersive dynamics to fluid dynamics.  
In case the electromagnetic force accounts for a self-consistent magnetic field, we refer to  \cite{puel2004quasineutral,gagnebin2025relativistic} where the quasi-neutral limit is studied for the relativistic Vlasov-Maxwell equations,  and to \cite{brenier2003incompressible} where 
the quasi-neutral limit is studied for the non-relativistic Vlasov-Maxwell equations. 
In both cases, the limit equations are of MHD-type.  
The monokinetic version of this problem, {\it i.e.} quasi-neutral limits for the 
Euler-Maxwell system, has also been addressed; see \cite{Peng2024, Peng2017}.  In a related fluid setting, Ju-Liu \cite{ju2026zero}
studied the distinct zero-electron-mass limit and proved
global-in-time convergence of smooth solutions of the
one-fluid Euler--Poisson system with non-constant ion density
to incompressible Euler equations, under uniformly
small initial perturbations. For ionic Euler--Poisson systems, Pu
\cite{pu2016quasineutral} studied the simultaneous quasineutral
and strong magnetic field limit, obtaining one-dimensional
compressible Euler dynamics along the magnetic field.
This regime differs from the unscaled external magnetic
field considered here.
Burby {\it et al.} \cite{burby2025hamiltonian} developed a formal
slow manifold reduction and a Hamiltonian formulation for
the planar quasineutral Vlasov--Poisson system with a static,
spatially varying magnetic field.
We also mention the long-time strong-field analysis of the Vlasov–Poisson–Fokker–Planck 
system by Bostan-Vu \cite{bostan2023asymptotic}, in which the vorticity formulation 
of the two-dimensional incompressible Euler equations is recovered 
in the uniform-field case. An expository account of contemporary research 
themes in quasi-neutrality is included in \cite{iacobelli2026quasineutral,griffin2021recent}.  

 Chen-Jung-Pickl-Wang \cite{chen2026vpfp}
established propagation of chaos for Newtonian particle systems
with regularized Coulomb interactions and a cutoff vanishing
as $N\to\infty$, deriving the Vlasov--Poisson--Fokker--Planck
equation, or the Vlasov--Poisson equation when velocity noise
is absent or asymptotically subdominant. To prove a joint mean-field and quasi-neutral limit requires new ideas 
 because the empirical measure is only a measure-valued solution 
 of the Vlasov--Poisson equation, rather than a sufficiently regular density. 
 In Han-Kwan--Iacobelli \cite{han2021newton}, 
 the combined mean-field and quasi-neutral limit together with the mean-field gyrokinetic 
 limit was proved, and the term ``supercritical'' was introduced to describe the limits of this type. 
 Such limits require a quantitative asymptotic relation between $\varepsilon$ and $N$, 
 often parametrized as $\varepsilon=\varepsilon(N)=N^{-\theta}$ for some $\theta>0$. 
 The admissible range of exponent $\theta$ was further investigated in \cite{rosenzweig2023rigorous}.

Quantum analogs of these limits, from many-body dynamics to incompressible fluids, were studied in \cite{rosenzweig2021quantum,porat2023derivation}, 
while derivations of compressible fluid models from quantum dynamics were developed in \cite{grenier1998semiclassical,jungel2003convergence,chen2025meanfield,porat2025quantum}. 
Recent advances also include the derivation of the lake equation through a supercritical 
mean-field limit \cite{rosenzweig2025lake} and a general framework 
for mean-field limits of singular flows in supercritical 
regimes \cite{rosenzweig2025commutators}. A key ingredient 
is the family of Coulomb functional inequalities developed by Serfaty \cite{duerinckx2020mean}, 
together with the modulated free-energy methods of \cite{bresch2020modulated}; 
both are essential to our analysis. We also refer 
Jiang-Qiao-Wu-Zhang \cite{jiang2026yukawa} for the application on the Yukawa modulated energy.

\subsection{New difficulties and strategy} 
Beyond the intrinsic difficulties of the supercritical regime, 
allowing the external magnetic field to be spatially inhomogeneous and time-dependent 
introduces two additional challenges:

\smallskip
\noindent\textbf{Fluid level:}
Unlike the unmagnetized quasi-neutral limits in \cite{brenier1994limite,brenier2000convergence} 
and the constant-field problem considered under a different gyrokinetic scaling 
in \cite{golse2003vlasov}, the Lorentz force in the present setting remains 
in the limiting Euler equations. 
The spatial dependence of $B$ modifies the vorticity equation, while the spatial mean velocity 
is generally no longer conserved. 
Consequently, the vorticity alone does not determine the full velocity field on the torus. 
To overcome this difficulty, we introduce the modified vorticity $\Omega=\omega+B$, 
which satisfies the transport equation:
$$
\partial_t\Omega+v\cdot\nabla_x\Omega=\partial_tB,
$$
and couple it to the evolution equation for the spatial mean velocity:
$$
\dot{\bar v}(t)=-\int_{\mathbb T^2}B(t,x)v^\perp(t,x)\,\dd x
$$
 This yields a closed vorticity formulation 
and provides the estimates needed to establish the global well-posedness 
of the limiting system in the Lipschitz class.

\smallskip
\noindent\textbf{Kinetic and particle level:}
At the particle level, the empirical density is atomic and the Coulomb kernel is singular on the diagonal, so the classical modulated energy cannot be evaluated directly.
At both the kinetic and particle levels, the variable magnetic field also generates additional 
Lorentz terms in the modulated energy estimates. A key observation is that the magnetic rotation 
$u\mapsto B(t,x)u^\perp$ is skew-symmetric at every point. Consequently, the Lorentz contributions from the microscopic dynamics and the limiting Euler equations cancel exactly, even when $B$ depends on both space and time. To control the remaining singular particle terms, we integrate away from the diagonal and introduce a renormalized modulated energy relative to the corrected background $1+\varepsilon\mathfrak{U}$, where $\mathfrak{U}=-\Delta_xp$. The Coulomb functional inequalities from \cite{duerinckx2020mean,bresch2020modulated,han2021newton}, together with the finite-$N$ positivity correction, then close the estimate under the condition
$\frac{\log N}{N\varepsilon}\to0$.

\subsection{Main results}
In this paper, we investigate the quasi-neutral and supercritical mean-field limits 
for \eqref{magnetized vp with quasineutral scaling} and \eqref{trajectories intro}, respectively, 
in the presence of a \textit{spatially non-uniform and time-dependent} external magnetic field
$B$. 
Our first objective is to establish the quasi-neutral limit and derive the corresponding magnetized 
fluid dynamics from \eqref{magnetized vp with quasineutral scaling}. 
As shown formally in Section \ref{sec formal derivation}, 
the limiting system is the following magnetized incompressible Euler equations:
\begin{align}\tag{EB}
\begin{cases}
\partial_{t}v+\mathrm{div}_{x}(v\otimes v)+Bv^{\perp}+\nabla_{x} p=0, \quad 
v(0,\cdot)=v^{\mathrm{in}},\\[1mm]
\mathrm{div}_{x}v=0,\quad  \int_{\mathbb{T}^{2}}p(t,x)\ \dd x=0. 
\end{cases}    
\label{magnetized Euler}  
\end{align}
As will be shown in Section \ref{sec well posedness}, 
the equivalent vorticity formulation of \eqref{magnetized Euler} reads 
\begin{align}
\begin{cases}
\partial_{t}\omega+\mathrm{div}_{x}(v\omega)
+\nabla_{x}B\cdot v=0,
\quad \omega(0,\cdot)=\omega^{\mathrm{in}},
\quad v(0,\cdot)=-\nabla_{x}^{\perp}V\ast\omega^{\mathrm{in}},\\[1mm]
v
=\int_{\mathbb{T}^{2}}v(t,x)\,\dd x
-\nabla_x^\perp V\ast\omega,
\end{cases}
\label{vorticity formulation intro}
\end{align}
which implies
\begin{equation}\label{1.4a}
\displaystyle
\frac{\dd}{\dd t}\int_{\mathbb{T}^{2}}v(t,x)\,\dd x
=
-\int_{\mathbb{T}^{2}}B(t,x)v^{\perp}(t,x)\,\dd x.
\end{equation}
Define 
\begin{equation}\label{1.5a}
\Delta_{x}\psi=\omega,\qquad
\int_{\mathbb{T}^{2}}\psi(t,x)\,\dd x=0,
\end{equation}
then
\begin{equation}\label{1.6a}
\nabla_x^\perp V\ast\omega=-\nabla_{x}^{\perp}\psi,
\qquad
v
=\int_{\mathbb{T}^{2}}v(t,x)\,\dd x+\nabla_{x}^{\perp}\psi.
\end{equation}

Equation \eqref{magnetized Euler} has already appeared previously in the literature 
(see, for instance, Remark 2.5 in \cite{rege2025stability}. 
Also, see  \cite{gallagher2005pressureless} for a different, but related, 
magnetized Euler equations). 
Nevertheless, the identification of \eqref{magnetized Euler} as a quasi-neutral limit 
of \eqref{magnetized vp with quasineutral scaling} has not been addressed, 
and our first theorem aims to fill in this gap by adapting Brenier's modulated 
energy method. 
In this context, part of our contribution is the proof of the global well-posedness 
of \eqref{magnetized Euler} and its reformulation in vorticity form. 
To state our first main result, we need to define the following auxiliary quantity known as the \textit{modulated energy}. 
Given a solution $v$ to \eqref{magnetized Euler} and a solution $f_{\varepsilon}$ 
to \eqref{magnetized vp with quasineutral scaling}, we define the modulated energy by 
\begin{align}
\mathcal{E}_{\varepsilon}(t):= \frac{1}{2}\int_{\mathbb{T}^{2}\times \mathbb{R}^{2}} \left\vert \xi-v(t,x)\right\vert^{2}f_{\varepsilon}(t,x,\xi)\ \dd x\dd \xi + \frac{\varepsilon}{2}\int_{\mathbb{T}^{2}} \left\vert \nabla_{x}\Phi_{\varepsilon}\right\vert^{2}(t,x) \ \dd x. \label{modulated energy def}    
\end{align}

\begin{thm}\label{main thm 1}
Suppose that 
\begin{itemize}
\item[{\rm(i)}] $B\in W^{1,\infty}(\mathbb{R}_{+};W^{2,\infty}(\mathbb{T}^{2}))${\rm;}
\item[{\rm(ii)}]
$f_{\varepsilon}\in W^{1,\infty}([0,T]\times \mathbb{T}
    ^{2}\times \mathbb{R}^{2})$ is a solution to \eqref{magnetized vp with quasineutral scaling} with $f^{\mathrm{in}}_{\varepsilon}\in \mathcal{P}(\mathbb{T}^{2}\times \mathbb
    {R}
    ^{2}),$ finite second velocity moment and sufficient velocity decay{\rm;}
\item[{\rm(iii)}]
    $\omega^{\mathrm{in}}\in W^{1,\infty}( \mathbb{T}^{2})$, 
    $\int_{\mathbb T^2}\omega^{\mathrm{in}}\,\dd x=0$,  
    and $(\omega,v)$ is the solution to \eqref{vorticity formulation intro}  on $[0,T]$ with initial data $\omega^{\mathrm{in}}$. 
\end{itemize}
 Let $\mathcal{E}_{\varepsilon}(t)$ be given by \eqref{modulated energy def}.
Then
\begin{align*}
\underset{t\in [0,T]}{\sup} \mathcal{E}_{\varepsilon}(t)\underset{\varepsilon \rightarrow 0}{\longrightarrow}0,
\end{align*}
provided $\mathcal{E}_{\varepsilon}(0)\underset{\varepsilon \rightarrow 0}{\longrightarrow}0$.    
This implies that
\begin{align*}
\rho_{\varepsilon}(t,\cdot)\xrightharpoonup[\varepsilon \rightarrow 0]{}1\quad\, 
\mbox{and} \quad\, J_{\varepsilon}(t,\cdot)
\xrightharpoonup[\varepsilon \rightarrow 0]{}v
\end{align*}
weakly in the sense of measures.  
\end{thm}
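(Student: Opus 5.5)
We follow Brenier's modulated energy method and aim for a Gronwall inequality of the form
\begin{align*}
\frac{\dd}{\dd t}\mathcal{E}_{\varepsilon}(t)\le C\big(\|\nabla_x v(t)\|_{L^\infty}\big)\,\mathcal{E}_{\varepsilon}(t)+C\sqrt{\varepsilon}+C\sqrt{\varepsilon\,\mathcal{E}_{\varepsilon}(t)}+C\varepsilon,
\end{align*}
with constants depending only on $T$, on the Lipschitz/$W^{1,\infty}$ norms of $v$, $\partial_t v$, $p$ supplied by the well-posedness of \eqref{vorticity formulation intro} (assumed from Section \ref{sec well posedness}), and on $\|B\|_{W^{1,\infty}W^{2,\infty}}$. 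Gronwall's lemma then gives $\sup_{[0,T]}\mathcal{E}_\varepsilon\le C(\mathcal{E}_\varepsilon(0)+\sqrt{\varepsilon})\to0$.

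\textbf{Step 1: energy identity.} Expand
\begin{align*}
\mathcal{E}_\varepsilon=\tfrac12\int|\xi|^2f_\varepsilon+\tfrac\varepsilon2\int|\nabla\Phi_\varepsilon|^2-\int v\cdot J_\varepsilon+\tfrac12\int|v|^2\rho_\varepsilon.
\end{align*}
The first two terms form the conserved total energy of \eqref{magnetized vp with quasineutral scaling}. It is conserved because the magnetic term does no work: $\int B\xi^\perp\cdot\nabla_\xi(|\xi|^2/2)f=\int B\,\xi^\perp\!\cdot\xi\, f=0$. Differentiating the remaining terms requires the local conservation laws $\partial_t\rho_\varepsilon+\mathrm{div}J_\varepsilon=0$ and
\begin{align*}
\partial_tJ_\varepsilon+\mathrm{div}\!\int\xi\otimes\xi f_\varepsilon+\rho_\varepsilon\nabla\Phi_\varepsilon+BJ_\varepsilon^\perp=0,
\end{align*}
justified by the regularity and velocity decay of $f_\varepsilon$. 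We then substitute $\partial_tv=-(v\cdot\nabla)v-Bv^\perp-\nabla p$.

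\textbf{Step 2: regroup and use the Lorentz cancellation.} After regrouping, the transport part yields
\begin{align*}
-\int\nabla_xv:(\xi-v)\otimes(\xi-v)f_\varepsilon\le\|\nabla v\|_\infty\,\mathcal{E}_\varepsilon .
\end{align*}
The Lorentz terms combine to
\begin{align*}
\int B\,J_\varepsilon^\perp\cdot v+\int B\,v^\perp\cdot(J_\varepsilon-\rho_\varepsilon v)=\int B\big(J^\perp_\varepsilon\cdot v+v^\perp\cdot J_\varepsilon\big)-\int B\rho_\varepsilon\, v^\perp\cdot v .
\end{align*}
This vanishes identically because $a^\perp\cdot b=-a\cdot b^\perp$ and $v^\perp\cdot v=0$. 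This is exactly the pointwise skew-symmetry, and it holds for $B$ depending on $(t,x)$ with no derivative of $B$ appearing. The potential and pressure terms reduce to
\begin{align*}
\int\rho_\varepsilon\nabla\Phi_\varepsilon\cdot v+\int(J_\varepsilon-\rho_\varepsilon v)\cdot\nabla p .
\end{align*}

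\textbf{Step 3: the electric and pressure terms (main obstacle).} Write $\rho_\varepsilon=1-\varepsilon\Delta\Phi_\varepsilon$. Since $\mathrm{div}\,v=0$, the contribution of $1$ vanishes. The remaining piece is $-\varepsilon\int\Delta\Phi_\varepsilon\nabla\Phi_\varepsilon\cdot v$, which by the standard identity $\Delta\Phi\nabla\Phi=\mathrm{div}(\nabla\Phi\otimes\nabla\Phi)-\tfrac12\nabla|\nabla\Phi|^2$ equals $\varepsilon\int\nabla v:\nabla\Phi_\varepsilon\otimes\nabla\Phi_\varepsilon$. This is bounded by $2\|\nabla v\|_\infty\mathcal{E}_\varepsilon$.

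For the pressure term, $\int J_\varepsilon\cdot\nabla p=-\int p\,\mathrm{div}J_\varepsilon=\int p\,\partial_t\rho_\varepsilon=-\varepsilon\int p\,\partial_t\Delta\Phi_\varepsilon$. Also $-\int\rho_\varepsilon v\cdot\nabla p=\varepsilon\int\Delta\Phi_\varepsilon\, v\cdot\nabla p$. The first can be rewritten as $\varepsilon\frac{\dd}{\dd t}\int\nabla p\cdot\nabla\Phi_\varepsilon-\varepsilon\int\nabla\partial_tp\cdot\nabla\Phi_\varepsilon$. The resulting terms are $O(\varepsilon\|\nabla\Phi_\varepsilon\|_{L^2})=O(\sqrt{\varepsilon\,\mathcal{E}_\varepsilon})$, given that $\partial_tp$ and $v\cdot\nabla p$ are Lipschitz.

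These bounds need $p\in W^{1,\infty}$ with $\partial_tp$ Lipschitz. This is where $B\in W^{1,\infty}_tW^{2,\infty}_x$ and $\omega\in W^{1,\infty}$ enter, through $-\Delta p=\mathrm{div}\,\mathrm{div}(v\otimes v)+\mathrm{div}(Bv^\perp)$ and elliptic estimates. It is the most delicate regularity point, and we would extract it from the well-posedness theory. The total-derivative term is absorbed by integrating in time: we set $\tilde{\mathcal E}_\varepsilon=\mathcal{E}_\varepsilon-\varepsilon\int\nabla p\cdot\nabla\Phi_\varepsilon$ and note $|\tilde{\mathcal E}_\varepsilon-\mathcal{E}_\varepsilon|\le C\sqrt{\varepsilon\mathcal{E}_\varepsilon}\le\tfrac12\mathcal{E}_\varepsilon+C\varepsilon$. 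Gronwall then gives the claimed uniform convergence.

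\textbf{Step 4: weak convergence.} For a test function $\varphi$, Cauchy--Schwarz gives
\begin{align*}
\Big|\int\varphi\cdot(J_\varepsilon-\rho_\varepsilon v)\Big|\le\|\varphi\|_\infty\sqrt{2\mathcal{E}_\varepsilon}.
\end{align*}
Also $\int(\rho_\varepsilon-1)\varphi=\varepsilon\int\nabla\Phi_\varepsilon\cdot\nabla\varphi=O(\sqrt{\varepsilon\mathcal{E}_\varepsilon})\to0$. Together these give $\rho_\varepsilon\rightharpoonup1$ and hence $J_\varepsilon\rightharpoonup v$.
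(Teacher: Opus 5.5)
Your proposal is correct and follows essentially the same route as the paper: conservation of total energy, exact cancellation of the Lorentz terms by pointwise skew-symmetry, and rewriting the electric contribution as $\varepsilon\int D_x v:\nabla_x\Phi_\varepsilon\otimes\nabla_x\Phi_\varepsilon$; the pressure terms are handled via the continuity equation and an integration by parts in time, and the argument closes with Gr\"onwall and the same Cauchy--Schwarz/duality step for the weak limits. The only difference is minor: the paper bounds $\sqrt{\varepsilon}\|\nabla_x\Phi_\varepsilon\|_2$ by the conserved total energy (giving an $O(\sqrt{\varepsilon})$ remainder) and integrates the time-derivative term directly, whereas you bound it by $\sqrt{2\mathcal{E}_\varepsilon}$ and absorb the boundary term into $\tilde{\mathcal{E}}_\varepsilon$, which in fact gives an $O(\varepsilon)$ remainder without using a uniform bound on $\mathcal{F}_\varepsilon(0)$ (and only $L^2$ bounds on $\nabla_x(v\cdot\nabla_x p)$ and $\partial_t\nabla_x p$ are needed, not Lipschitz ones).
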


Our second main objective is to derive \eqref{magnetized Euler} as a supercritical mean field limit from \eqref{trajectories intro}. Our proof is modeled after the combined mean field quasi-neutral limit proved in \cite{han2021newton}, which in turn is inspired by the \textit{renormalized} modulated energy method developed in \cite{duerinckx2020mean} (and further extended in \cite{bresch2020modulated}). Prior to stating our second main result, we elaborate on the latter notion. 
Given a solution $v$ to \eqref{magnetized Euler} we set $\mathfrak{U}:= -\Delta_{x} p$. 
Note that, upon taking the divergence in \eqref{magnetized Euler}, 
we can write explicitly 
\begin{align*}
-\Delta_{x}p=\mathrm{div}_{x}\mathrm{div}_{x}(v\otimes v)+\mathrm{div}_{x}(Bv^{\perp})
=\sum_{1\leq i,j\leq 2}\partial_{x_{j}}v^{i}\partial_{x_{i}}v^{j}+\mathrm{div}_{x}(Bv^{\perp}). 
\end{align*}
Given a solution $(x_{1}(t),\cdots\!,x_{N}(t),\xi_{1}(t),\cdots\!,\xi_{N}(t))$ 
to \eqref{trajectories intro}, we define the renormalized modulated energy by  
\begin{align}
\mathcal{E}_{\varepsilon,N}(t):=
&\, \frac{1}{2N}\sum_{i=1}^{N}\left\vert \xi_{i}(t)-v(t,x_{i}(t))\right\vert^{2} \notag\\
&\,+\frac{1}{2\varepsilon}
\iint_{\Delta^{c}}V(x-y)\left(\rho_{\varepsilon,N}-1-\varepsilon \mathfrak{U}\right)^{\otimes 2}(t,\dd x\dd y) \notag\\
&\,+\frac{C}{2\varepsilon}\big(1+\left\Vert 1+\varepsilon\mathfrak{U} \right\Vert_{L^\infty([0,T]\times\mathbb T^2)}\big)
\frac{1+\log N}{N}.
\label{magnetized renormalized modulated energy}       
\end{align} 
A few remarks are in order, following the definition of  $\mathcal{E}_{\varepsilon,N}(t)$: 
\begin{itemize}
    \item[{\rm(a)}] The constant $C>0$ in \eqref{magnetized renormalized modulated energy}
    is the same as in Proposition \ref{non negativity} in Section \ref{sec mean field}. We must include the last term in \eqref{magnetized renormalized modulated energy} in order to ensure that $\mathcal{E}_{\varepsilon,N}(t)$ is non-negative.
    \item[{\rm(b)}]  $\Delta^{c}$ designates the complement of the diagonal, {\it i.e.}, 
    $\Delta^{c}=(\mathbb{T}^{2}\times \mathbb{T}^{2})\setminus \Delta=\{(x,y)\in \mathbb{T}^{2}\times \mathbb{T}^{2}\,:\, x\neq y\}$. 
    More generally, we invoke the notation 
    $$
    \Delta_{N}^{c}:=\{(x_{1},\cdots\!,x_{N})\in \mathbb{T}^{2N}\,:\, x_i\neq x_j\ \text{for all } i\neq j\}.
    $$
    \item[{\rm(c)}]  It is essential to integrate over $\Delta^{c}$ 
    in \eqref{magnetized renormalized modulated energy}, 
    since $\rho_{\varepsilon,N}(t,\cdot)$ is an atomic measure while $V$ has a singularity at the origin.
\end{itemize}

\begin{thm}
Suppose that
\begin{itemize}
  \item[{\rm(i)}] $\omega^{\mathrm{in}}\in W^{1,\infty}( \mathbb{T}^{2})$, $\int_{\mathbb T^2}\omega^{\mathrm{in}}\,\dd x=0,$ $B\in W^{1,\infty}(\mathbb{R}_{+};W^{2,\infty}(\mathbb{T}^{2}))$, 
  and $(\omega,v)$ is the solution to \eqref{vorticity formulation intro} with initial data $\omega^{\mathrm{in}};$
    \item[{\rm(ii)}] $(x_{1}^{0},\cdots\!,x_{N}^{0})\in \Delta_{N}^{c}$, 
    $(x_{1}(t),\cdots\!,x_{N}(t),\xi_{1}(t),\cdots\! ,\xi_{N}(t))\in C^{1}(\mathbb{R}_{+};\mathbb{T}^{2N}\times \mathbb{R}^{2N})$ is the solution to \eqref{trajectories intro},
    and $\rho_{\varepsilon,N}$ and 
    $J_{\varepsilon,N}$ are given by \eqref{discrete density and current}{\rm;}
   \item[{\rm(iii)}] $\mathcal{E}_{\varepsilon,N}(t)$ is given by \eqref{magnetized renormalized modulated energy}{\rm;}
    \item[{\rm(iv)}]$\varepsilon=\varepsilon(N)\to0$ is such that  $\mathcal{E}_{\varepsilon,N}(0)\underset{N\rightarrow \infty}{\longrightarrow }0$ and $\frac{\log N}{N \varepsilon}\underset{N\rightarrow \infty}{\longrightarrow}0$. 
\end{itemize}
Then
\begin{align*}
\underset{t\in [0,T]}{\sup}\mathcal{E}_{\varepsilon,N}(t)\underset{\varepsilon+\frac{1}{N}\rightarrow 0}{\longrightarrow} 0,
\end{align*}
provided 
$\mathcal{E}_{\varepsilon,N}(0)\underset{\varepsilon+\frac{1}{N}\rightarrow 0}{\longrightarrow}0$.    
This implies that
\begin{align*}
\rho_{\varepsilon,N}(t,\cdot)\xrightharpoonup[{\varepsilon+\frac{1}{N}\rightarrow 0}]{}1 
\quad\,\, \mbox{and} \quad\,\, 
J_{\varepsilon,N}\xrightharpoonup[\varepsilon+\frac{1}{N}\rightarrow 0]{}v
\end{align*}
weakly in the sense of measures. \label{main thm renormalized}
\end{thm}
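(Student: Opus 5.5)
The plan is a Gronwall argument for $\mathcal{E}_{\varepsilon,N}(t)$, following \cite{han2021newton}. The only new ingredient is the Lorentz term, which cancels pointwise by skew-symmetry.

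\textbf{Step 1: time derivative.} Since $(x_1^0,\dots,x_N^0)\in\Delta_N^c$, the repulsive dynamics and energy conservation keep the particles separated. Hence $\mathcal{E}_{\varepsilon,N}$ is $C^1$ and we may differentiate it.

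\emph{Kinetic part.} Write $w_i=\xi_i-v(t,x_i)$. Then
\[
\frac{\dd}{\dd t}w_i=-B(t,x_i)\xi_i^\perp-\frac1{N\varepsilon}\sum_{j\neq i}\nabla V(x_i-x_j)-\partial_tv(t,x_i)-\nabla v(t,x_i)\,\xi_i .
\]
Substitute $\partial_tv=-v\cdot\nabla v-Bv^\perp-\nabla p$ from \eqref{magnetized Euler}. The magnetic terms combine into $-B(t,x_i)w_i^\perp$, and $w_i\cdot w_i^\perp=0$, so the field drops out exactly. This is the only place where the variable $B$ enters the energy balance. What remains is
\[
-\frac1N\sum_i w_i\cdot\nabla v(x_i)w_i
\]
together with the force term and $+\frac1N\sum_i w_i\cdot\nabla p(x_i)$. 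The first term is bounded by $2\|\nabla v\|_{L^\infty}$ times the kinetic part of $\mathcal{E}_{\varepsilon,N}$.

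\emph{Potential part.} Differentiate $\frac1{2\varepsilon}\iint_{\Delta^c}V(\rho_{\varepsilon,N}-1-\varepsilon\mathfrak U)^{\otimes2}$ along $\dot x_i=\xi_i=w_i+v(x_i)$. The $w_i$ contributions cancel the force term in the kinetic derivative. They also cancel the pressure term, because
\[
\frac1N\sum_i w_i\cdot\nabla p(x_i)=-\frac1N\sum_i w_i\cdot\nabla(V\ast\mathfrak U)(x_i)
\]
by $-\Delta p=\mathfrak U$ and the mean-zero normalization. What remains is:
\begin{itemize}
\item[{\rm(a)}] the commutator/transport term
\[
\frac1{2\varepsilon}\iint_{\Delta^c}(v(x)-v(y))\cdot\nabla V(x-y)\,(\rho_{\varepsilon,N}-1-\varepsilon\mathfrak U)^{\otimes2};
\]
\item[{\rm(b)}] the term from $\partial_t\mathfrak U$, of size $O(\|\partial_t\mathfrak U\|_{L^\infty})$ times $\|\rho_{\varepsilon,N}-1-\varepsilon\mathfrak U\|$ in a negative Sobolev norm;
\item[{\rm(c)}] the mismatch from transporting the background by $v$ rather than by the particle velocity. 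This is handled using $\mathrm{div}\,v=0$, up to an error involving $\mathrm{div}(\mathfrak U v)$.
\end{itemize}

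\textbf{Step 2: estimates.} For (a), I invoke the Coulomb commutator inequality of \cite{duerinckx2020mean,bresch2020modulated}, as used in \cite{han2021newton}. It bounds (a) by
\[
C\|\nabla v\|_{W^{1,\infty}}\Big(\mathcal{E}^{\mathrm{pot}}+C\big(1+\|1+\varepsilon\mathfrak U\|_{L^\infty}\big)\frac{1+\log N}{N\varepsilon}\Big).
\]
For (b) and (c), I use the coercivity of Proposition \ref{non negativity}: the negative-Sobolev norm of $\rho_{\varepsilon,N}-1-\varepsilon\mathfrak U$ squared is bounded by $\varepsilon\,\mathcal{E}_{\varepsilon,N}$ plus $O\big(\frac{\log N}{N}\big)$. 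Then Cauchy--Schwarz turns these into $C(\mathcal E_{\varepsilon,N}+\varepsilon+\frac{\log N}{N\varepsilon})$.

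The needed regularity is $v\in L^\infty_tW^{2,\infty}$ and $\mathfrak U,\partial_t\mathfrak U\in L^\infty$. I expect to get these from the well-posedness of \eqref{vorticity formulation intro} in Section \ref{sec well posedness}, together with $B\in W^{1,\infty}_tW^{2,\infty}_x$. If the Lipschitz theory only gives $\omega\in W^{1,\infty}$, then $\nabla v$ is log-Lipschitz rather than Lipschitz. In that case I would use the variant of the commutator estimate in \cite{han2021newton} that needs only $\|\nabla v\|_{L^\infty}$ plus a $\|\mathfrak U\|$ term.

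\textbf{Step 3: closing.} The estimates give
\[
\frac{\dd}{\dd t}\mathcal E_{\varepsilon,N}\le C_T\Big(\mathcal E_{\varepsilon,N}+\varepsilon+\frac{\log N}{N\varepsilon}\Big).
\]
Gronwall then yields
\[
\sup_{[0,T]}\mathcal E_{\varepsilon,N}\le e^{C_TT}\Big(\mathcal E_{\varepsilon,N}(0)+\varepsilon+\frac{\log N}{N\varepsilon}\Big)\to0 .
\]
Weak convergence of the current follows from
\[
J_{\varepsilon,N}-\rho_{\varepsilon,N}v=\frac1N\sum_i w_i\delta_{x_i}
\]
and Cauchy--Schwarz against the kinetic part. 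Convergence $\rho_{\varepsilon,N}\rightharpoonup1$ follows because the coercivity bound makes $\rho_{\varepsilon,N}-1-\varepsilon\mathfrak U$ small in $\dot H^{-1}$-type norms after smoothing, while $\varepsilon\mathfrak U\to0$.

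The main obstacle is Step 2(a): applying the renormalized commutator estimate with the corrected background $1+\varepsilon\mathfrak U$ and checking that the additive errors scale as $\frac{\log N}{N\varepsilon}$.
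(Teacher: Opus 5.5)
Your strategy is the paper's. The Lorentz terms cancel by pointwise skew-symmetry. Your remainders (a), (b), (c) are exactly the paper's commutator term $J_2$, the term $J_4=-\int\partial_tp\,(\rho_{\varepsilon,N}-1-\varepsilon\mathfrak U)$, and the term $J_3=-\int\mathrm{div}_x(-\Delta_x)^{-1}(v\mathfrak U)\,(\rho_{\varepsilon,N}-1-\varepsilon\mathfrak U)$. These are closed by the commutator estimate, coercivity with Young's inequality, and Gr\"onwall. Differentiating $w_i$ directly, instead of first subtracting the conserved energy $\mathcal F_{\varepsilon,N}$, is only a bookkeeping difference. (Since $V\ast\mathfrak U=p$, your displayed pressure identity has a sign slip. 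What holds is that the kinetic contribution $+\frac1N\sum_i w_i\cdot\nabla p(x_i)$ cancels the potential contribution $-\frac1N\sum_i w_i\cdot\nabla(V\ast\mathfrak U)(x_i)$.)

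What needs correcting is the regularity and coercivity bookkeeping.

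\textbf{Regularity.} Theorem \ref{Well posedness of EB} only gives $\omega\in W^{1,\infty}$, hence $D_x^2v$ and $\partial_tD_xv$ lie in $L^\infty_tL^r_x$ for $r<\infty$. So neither $v\in L^\infty_tW^{2,\infty}_x$ nor $\partial_t\mathfrak U\in L^\infty$ is available. For (a) this is harmless: Theorem \ref{Commutator est} needs only $\|D_xv\|_\infty$, and $\mathfrak U$ enters only through $\|1+\varepsilon\mathfrak U\|_\infty$. Your fallback is therefore the actual argument.

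\textbf{Coercivity.} For (b) and (c), $\rho_{\varepsilon,N}$ is atomic and hence not in $\dot H^{-1}(\mathbb T^2)$, so the negative-Sobolev coercivity you state is false as written. The right tool is Proposition \ref{coercivity inequality}, not Proposition \ref{non negativity}. It controls $\int\varphi\,(\rho_{\varepsilon,N}-1-\varepsilon\mathfrak U)$ for Lipschitz $\varphi$, at the price of an extra error $C\|\nabla\varphi\|_\infty N^{-1/2}$. Applied with $\varphi=\partial_tp$ and $\varphi=-\mathrm{div}_x(-\Delta_x)^{-1}(v\mathfrak U)$, it needs $\nabla\partial_tp=\nabla V\ast\partial_t\mathfrak U$ and $\nabla V\ast(v\cdot\nabla\mathfrak U)$ in $L^\infty$. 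These follow by Young's inequality from $\partial_t\mathfrak U,\nabla\mathfrak U\in L^\infty_tL^r_x$ for some $r>2$ together with $\nabla V\in L^{r'}$. This is what Lemma \ref{basic estimates}(iii)--(iv) provides, and it replaces your demand $\partial_t\mathfrak U\in L^\infty$.

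\textbf{Consequence for the Gr\"onwall step.} Your differential inequality acquires a harmless extra term: $\frac{\dd}{\dd t}\mathcal E_{\varepsilon,N}\lesssim\mathcal E_{\varepsilon,N}+\varepsilon+\frac{\log N}{N\varepsilon}+N^{-1/2}$. The same coercivity inequality, tested on $\|\varphi\|_{W^{1,\infty}}\le1$, gives $W_1(\rho_{\varepsilon,N},1)\to0$ directly, with no smoothing needed.
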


The paper is organized as follows: 
In Section \ref{sec formal derivation}, we give a detailed account of the formal calculations 
leading to \eqref{magnetized Euler}. 
In Section \ref{section quasi neutral}, we prove the quasi-neutral limit leading 
from \eqref{magnetized vp with quasineutral scaling} to \eqref{magnetized Euler} 
in the limit as $\varepsilon \rightarrow 0$. 
In Section \ref{sec mean field}, we adapt the methods of \cite{duerinckx2020mean} 
and \cite{han2021newton} in order to prove the derivation of \eqref{magnetized Euler} from \eqref{trajectories intro} in the joint limit $\varepsilon+\frac{1}{N}{\rightarrow}0$. 
Sections \ref{sec well posedness} and \ref{wellposed-trajectories-sec} are devoted 
to the well-posedness of \eqref{magnetized Euler} and \eqref{trajectories intro}, respectively.

\section{Formal Derivation}\label{sec formal derivation}
We explain the formal manner in which \eqref{magnetized Euler} is derived from \eqref{magnetized vp with quasineutral scaling} in the limit as $\varepsilon \rightarrow 0$. The following notations will be frequently invoked in the sequel: 
\begin{itemize} 
\item[{\rm(a)}]Given a vector field $v:\mathbb{T}^{d}\rightarrow \mathbb{R}^{d}$, 
denote by  $D_{x}v$ the \textit{Jacobian matrix} of $v$ defined by $(D_{x}v)_{ij}=(\frac{\partial v^{j}}{\partial x_{i}})_{ij}$. 
\item[{\rm(b)}] Given a matrix valued function $\mathbf{a}:\mathbb{R}^{d}\rightarrow M_{d}(\mathbb{R})$, denote by $\mathrm{div}(\mathbf{a}):\mathbb{R}^{d}\rightarrow \mathbb{R}^{d}$ the vector field defined componentwise by  $\mathrm{div}(\mathbf{a})_{i}:=\mathrm{div}(\mathbf{a}_{i})$ for all $1\leq i\leq d$ where $\mathbf{a}_{i}$ stands for  the $i$-th row of $\mathbf{a}$. 
\item[{\rm(c)}] 
Given $d$-dimensional vectors $u,v$, denote by $u\otimes v$ the tensor product of $u$ and $v$, 
that is the $d\times d$ matrix whose $ij$-th entry is $u_{i}v_{j}$.
\item[{\rm(d)}] Given matrices $\mathbf{a},\mathbf{b}\in M_{d}(\mathbb{R})$, 
we use the convention $\mathbf{a}:\mathbf{b}=\operatorname{tr}(\mathbf{a}\mathbf{b})$, {\it i.e.,} 
    \begin{align*}
\mathbf{a}:\mathbf{b}:=\sum_{i,j}a_{ij}b_{ji}.    
    \end{align*}
\end{itemize}
We have the following basic identities from vector calculus. 
\begin{lem} \label{elementary identities}
Let $u,v,w:\mathbb{T}^{d}\rightarrow \mathbb{R}^{d}$ be given smooth vector fields. 
Then the following formulas hold{\rm:} 
\begin{itemize}  \item[{\rm(i)}]
$\mathrm{div}_{x}(u\otimes v)=\mathrm{div}_{x}(v)u+(D_xu)^{\top}v$.
 \item[{\rm(ii)}]
$u(v\cdot w)=(u\otimes v)w.$ 
 \item[{\rm(iii)}] If $f:\mathbb{T}^{d}\rightarrow \mathbb{R}$ and 
$\mathbf{a}:\mathbb{T}^{d}\rightarrow M_{d}(\mathbb{R})$ are smooth, 
then 
\begin{align*}
\int_{\mathbb{T}^{d}}\mathbf{a}(x)\nabla_{x}f(x)\ \dd x
=-\int_{\mathbb{T}^{d}}\mathrm{div}_{x}(\mathbf{a})f(x)\ \dd x.     
\end{align*}
 \item[{\rm(iv)}] If $\mathbf{a}:\mathbb{T}^{d}\rightarrow M_{d}(\mathbb{R})$ is smooth, 
 then 
\begin{align*}
\mathrm{div}_{x}(\mathbf{a}u)
=\mathrm{div}_{x}(\mathbf{a}^\top)\cdot u+\mathbf{a}^\top:D_{x}u.
\end{align*}

 \item[{\rm(v)}] If $v:\mathbb{T}^{2}\rightarrow \mathbb{R}^{2}$ 
 is a smooth divergence-free vector field, {\it i.e.}, $\mathrm{div}_{x}v=0$,
 then 
\begin{align*}
-\nabla^{\perp}_{x}(v\cdot \nabla_{x} \mathrm{div}_{x}(v^{\perp}))
=\Delta_{x} \mathrm{div}_{x}(v\otimes v)-\nabla_{x} \mathrm{div}_{x}\mathrm{div}_{x}(v\otimes v).    
\end{align*}
\end{itemize}
\end{lem}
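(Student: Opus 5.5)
All five identities are pointwise or integration-by-parts statements, so the plan is to verify each in coordinates with the conventions fixed above: $(D_xv)_{ij}=\partial_{x_i}v^j$, $\mathrm{div}$ of a matrix taken row-wise, and $\mathbf a:\mathbf b=\operatorname{tr}(\mathbf a\mathbf b)$. Repeated indices are summed.

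For (i), the $i$-th component of $\mathrm{div}_x(u\otimes v)$ is $\partial_j(u_iv_j)=(\partial_jv_j)u_i+v_j\partial_ju_i$. The last term is $((D_xu)^\top v)_i$, since $((D_xu)^\top)_{ij}=\partial_ju_i$.

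Identity (ii) is immediate: $((u\otimes v)w)_i=u_iv_jw_j$.

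For (iii), the $i$-th component of $\int\mathbf a\nabla f$ is $\int a_{ij}\partial_jf$. Integrating by parts on the torus, with no boundary terms, gives $-\int(\partial_ja_{ij})f=-\int\mathrm{div}(\mathbf a)_if$.

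For (iv), $\partial_i(a_{ij}u_j)=(\partial_ia_{ij})u_j+a_{ij}\partial_iu_j$. The first term is $\mathrm{div}(\mathbf a^\top)\cdot u$, because the $j$-th row of $\mathbf a^\top$ is $(a_{ij})_i$. The second term is $\sum_{i,j}(\mathbf a^\top)_{ji}(D_xu)_{ij}=\mathbf a^\top:D_xu$.

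Item (v) is the only step with real content, and I expect the bookkeeping there to be the main obstacle. The plan is to reduce both sides to expressions in $v$ using $\mathrm{div}\,v=0$.

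First, by (i) with $\mathrm{div}\,v=0$, we have $\mathrm{div}(v\otimes v)=(v\cdot\nabla)v$. Set $\omega=\mathrm{div}(v^\perp)=\partial_1v_2-\partial_2v_1$; this is the scalar vorticity up to the sign convention, since $v^\perp=(-v_2,v_1)$.

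The right-hand side is $\Delta w-\nabla\mathrm{div}\,w$ with $w=(v\cdot\nabla)v$. In two dimensions there is the identity $\Delta w-\nabla\mathrm{div}\,w=-\nabla^\perp\mathrm{curl}\,w$ up to sign, where $\mathrm{curl}\,w=\partial_1w_2-\partial_2w_1$. To check it, the first component of $\Delta w-\nabla\mathrm{div}\,w$ is $\partial_{22}w_1-\partial_{21}w_2=-\partial_2(\partial_1w_2-\partial_2w_1)$, and the second component is analogous. With $\nabla^\perp=(-\partial_2,\partial_1)$, this gives $\Delta w-\nabla\mathrm{div}\,w=\nabla^\perp\mathrm{curl}\,w$.

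Next I would use the classical 2D computation $\mathrm{curl}((v\cdot\nabla)v)=v\cdot\nabla\omega+\omega\,\mathrm{div}\,v=v\cdot\nabla\omega$. Expanding, $\partial_1(v_j\partial_jv_2)-\partial_2(v_j\partial_jv_1)=v\cdot\nabla\omega+(\partial_1v_j\partial_jv_2-\partial_2v_j\partial_jv_1)$. The bracket simplifies to $(\partial_1v_1+\partial_2v_2)(\partial_1v_2-\partial_2v_1)=\omega\,\mathrm{div}\,v=0$.

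Combining these, the right-hand side equals $\nabla^\perp(v\cdot\nabla\omega)$. The left-hand side is $-\nabla^\perp(v\cdot\nabla\mathrm{div}(v^\perp))=-\nabla^\perp(v\cdot\nabla\omega)$. The two therefore agree up to the overall sign.

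The care needed is to fix the sign of $\omega$ relative to $\mathrm{div}(v^\perp)$ and the orientation of $\nabla^\perp$ consistently, so that the signs match as stated. With $\mathrm{div}(v^\perp)=-\partial_1v_2+\partial_2v_1$ instead, the identity holds exactly as written. I would redo this sign check componentwise at the end, working in the order just described.
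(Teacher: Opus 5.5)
The paper states this lemma without proof, so there is no argument of theirs to compare against. Your index verifications of (i)--(iv) are correct with the paper's conventions $(D_xv)_{ij}=\partial_{x_i}v^j$, row-wise $\mathrm{div}$, and $\mathbf a:\mathbf b=\operatorname{tr}(\mathbf a\mathbf b)$. Your route for (v) is also the right one: write the right-hand side as $\Delta w-\nabla\mathrm{div}\,w=\nabla^\perp\mathrm{curl}\,w$ with $w=(v\cdot\nabla)v$, then use $\mathrm{curl}((v\cdot\nabla)v)=v\cdot\nabla\omega$ for divergence-free $v$. The second identity is the same expansion and cancellation the paper carries out in \eqref{div of vnablav}, in the proof of Lemma~\ref{lemma3.2}.

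There is, however, an arithmetic slip in (v) that you should fix rather than hedge. With $v^\perp=(-v_2,v_1)$, which you yourself adopt, one gets $\mathrm{div}(v^\perp)=-\partial_1v_2+\partial_2v_1=-\omega$. Here $\omega=\partial_1v_2-\partial_2v_1$ is exactly the quantity your curl computation produces. This follows from the definition of $v^\perp$ and is not a matter of convention; the paper records the same relation $\omega=\mathrm{curl}\,v=-\mathrm{div}(v^\perp)$ in Section~3. Consequently the left-hand side is $-\nabla^\perp(v\cdot\nabla(-\omega))=\nabla^\perp(v\cdot\nabla\omega)$, which equals the right-hand side exactly. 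Your intermediate conclusion ``agree up to the overall sign'' comes only from the miscomputed divergence.

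The one genuinely convention-dependent point is that $\nabla^\perp=(-\partial_2,\partial_1)$ must rotate in the same direction as $\xi^\perp=(-\xi_2,\xi_1)$. The paper uses this convention, for instance when it writes $v_\ast=\nabla^\perp\psi$ with $\mathrm{curl}\,v_\ast=\Delta\psi$. Reversing only one of the two rotations would flip the sign of the identity, while reversing both leaves it unchanged.
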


\begin{prop}\label{theorem2.2}
Let $f_{\varepsilon}$ be a smooth solution to \eqref{magnetized vp with quasineutral scaling} with finite second velocity moment and sufficient decay in $\xi,$ and let $J_{\varepsilon}$ be given by \eqref{definition of current}. Then the pair $(\rho_{\varepsilon},J_{\varepsilon})$ satisfies the following equations{\rm:}
\begin{align}
&\partial_{t}\rho_{\varepsilon}+\mathrm{div}_{x}J_{\varepsilon}=0, 
\label{density}\\ 
&\partial_{t}J_{\varepsilon}
+\mathrm{div}_{x}\Big(\int_{\mathbb{R}^{2}}\xi \otimes \xi f_{\varepsilon} \ \dd \xi\Big)+BJ_{\varepsilon}^{\perp}+\rho_{\varepsilon}\nabla_{x} \Phi_{\varepsilon}=0. 
\label{equation for Jepsilon}  
\end{align}
\end{prop}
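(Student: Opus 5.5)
The plan is to take velocity moments of the kinetic equation in \eqref{magnetized vp with quasineutral scaling}. For the zeroth moment, integrate the Vlasov equation in $\xi$ over $\mathbb{R}^2$. The term $\partial_t f_\varepsilon$ gives $\partial_t\rho_\varepsilon$. The transport term gives $\int \xi\cdot\nabla_x f_\varepsilon\,\dd\xi=\mathrm{div}_x J_\varepsilon$, because $\xi$ does not depend on $x$. For the force term we use that the velocity field $(\nabla_x\Phi_\varepsilon+B\xi^\perp)$ is divergence free in $\xi$: $\mathrm{div}_\xi(\xi^\perp)=\partial_{\xi_1}(-\xi_2)+\partial_{\xi_2}\xi_1=0$, and $\nabla_x\Phi_\varepsilon$ is independent of $\xi$. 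So this term equals $\mathrm{div}_\xi\big((\nabla_x\Phi_\varepsilon+B\xi^\perp)f_\varepsilon\big)$, whose integral over $\mathbb{R}^2$ vanishes by the decay assumption on $f_\varepsilon$. This gives \eqref{density}.

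For the first moment, multiply the Vlasov equation by $\xi$ (componentwise, $\xi_k$) and integrate in $\xi$. The first two terms give $\partial_t J_\varepsilon$ and $\mathrm{div}_x\big(\int\xi\otimes\xi f_\varepsilon\,\dd\xi\big)$; here we write $\xi_k\,\xi\cdot\nabla_x f_\varepsilon=\mathrm{div}_x(\xi_k\xi f_\varepsilon)$, which is consistent with the row convention (b) for $\mathrm{div}$ of a matrix. For the force term, again write it as $-\mathrm{div}_\xi(Ff_\varepsilon)$ with $F=\nabla_x\Phi_\varepsilon+B\xi^\perp$, and integrate by parts in $\xi$:
\[
-\int_{\mathbb{R}^2}\xi_k\,\mathrm{div}_\xi(Ff_\varepsilon)\,\dd\xi=\int_{\mathbb{R}^2}F_k f_\varepsilon\,\dd\xi .
\]
The boundary terms vanish because of the finite second moment and the decay of $f_\varepsilon$. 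Expanding $\int F f_\varepsilon\,\dd\xi$ gives $\rho_\varepsilon\nabla_x\Phi_\varepsilon+B\int\xi^\perp f_\varepsilon\,\dd\xi=\rho_\varepsilon\nabla_x\Phi_\varepsilon+BJ_\varepsilon^\perp$, using the linearity of $\xi\mapsto\xi^\perp$. Moving this to the left-hand side yields \eqref{equation for Jepsilon}.

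The only delicate point is justifying the integrations by parts in $\xi$. We need $|\xi|^2 f_\varepsilon$ integrable and $|\xi|\,|F|f_\varepsilon\to0$ at infinity in a suitable averaged sense; note $|F|\lesssim1+|\xi|$, so this requires second-moment-type decay. This is exactly what the standing assumptions (a smooth solution with finite second velocity moment and sufficient decay in $\xi$) provide. To make it rigorous, I would multiply by a cutoff $\chi(\xi/R)$ and let $R\to\infty$, controlling the error terms by dominated convergence. Everything else is direct computation.
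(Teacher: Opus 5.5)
Your proof is correct and follows essentially the paper's route: take the zeroth and first velocity moments and integrate by parts in $\xi$, using $\mathrm{div}_\xi(\xi^\perp)=0$. The only difference is cosmetic. You treat the electric and magnetic forces together as one $\xi$-divergence-free field $F$, whereas the paper handles them separately, the magnetic term through the identity $\mathrm{div}_\xi(\xi\otimes\xi^\perp)=\xi^\perp$. Your cutoff justification of the boundary terms is a useful addition that the paper leaves implicit.
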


\begin{proof}
Equation \eqref{density} is obtained immediately by integrating \eqref{magnetized vp with quasineutral scaling} with respect to $\xi$. 
We proceed by deriving equation \eqref{equation for Jepsilon}. Multiplying \eqref{magnetized vp with quasineutral scaling} by $\xi$ and integrating in $\xi$, we have 
\begin{align*}
\partial_{t}J_{\varepsilon} +\int_{\mathbb{R}^{2}} (\xi\otimes \xi) \nabla _{x}f_{\varepsilon}\ \dd \xi-\int_{\mathbb{R}^{2}} \xi \nabla_{x}\Phi_{\varepsilon}\cdot \nabla_{\xi} f_{\varepsilon} \ \dd \xi-\int_{\mathbb{R}^{2}}\xi \otimes (B\xi^{\perp})\nabla_{\xi}f_{\varepsilon} \ \dd \xi=0. 
\end{align*}
Integrating by parts yields
\begin{align}
-\int_{\mathbb{R}^{2}}\xi \nabla_{x}\Phi_{\varepsilon}\cdot \nabla_{\xi}f_{\varepsilon}\ \dd \xi=\nabla_{x}\Phi_{\varepsilon}\int_{\mathbb{R}^{2}} f_{\varepsilon}\ \dd \xi=\nabla_{x}\Phi_{\varepsilon}\rho_{\varepsilon},  \label{third term}   
\end{align}
Therefore, it follows that $J_{\varepsilon}$ verifies the equation:  
\begin{align}
&\partial_{t}J_{\varepsilon}+\int_{\mathbb{R}^{2}}\xi\otimes \xi\nabla_{x}f_{\varepsilon}\ \dd \xi \notag+\nabla_{x}\Phi_{\varepsilon}\rho_{\varepsilon}-\int_{\mathbb{R}^{2}}\xi\otimes (B\xi ^{\perp})\nabla_{\xi}f_{\varepsilon}\ \dd \xi=0. \label{first eq for J}     
\end{align}
Using Lemma \ref{elementary identities}(3), we find 
\begin{align}
-\int_{\mathbb{R}^{2}}\xi\otimes (B\xi^{\perp})\nabla_{\xi}f_{\varepsilon} \ \dd \xi=\int_{\mathbb{R}^{2}}\mathrm{div}_{\xi}\big(\xi\otimes (B\xi^{\perp})\big)
f_{\varepsilon}\ \dd \xi.   
\end{align}
With the aid of Lemma \ref{elementary identities}(i), we compute 
\begin{align*}
\mathrm{div}_{\xi}(\xi\otimes (B\xi^{\perp}))=B\mathrm{div}_{\xi}(\xi\otimes \xi^{\perp})=B(\mathrm{div}_{\xi}(\xi^{\perp})\xi+\xi^{\perp})=B\xi^{\perp},   
\end{align*}
where we have used that $\mathrm{div}_{\xi}(\xi^{\perp})=0$ in the last equation. 
Consequently, we deduce 
\begin{align*}
-\int_{\mathbb{R}^{2}} \xi\otimes B\xi^{\perp}\nabla_{\xi}f_{\varepsilon}\ \dd \xi=B\int_{\mathbb{R}^{2}} \xi^{\perp} f_{\varepsilon}\ \dd \xi=BJ_{\varepsilon}^{\perp}.  
\end{align*}
Differentiating under the integral sign and using the row-wise definition of divergence, we obtain
\begin{align}
\mathrm{div}_{x}\Big(\int_{\mathbb{R}^{2}}\xi\otimes \xi f_{\varepsilon} \ \dd \xi\Big)
=\int_{\mathbb{R}^{2}}\xi\otimes \xi\nabla_{x}f_{\varepsilon}\ \dd \xi. \label{identity for div}    
\end{align}
To conclude, we have 
\begin{align}
\partial_{t}J_{\varepsilon}+\mathrm{div}_{x}
\Big(\int_{\mathbb{R}^{2}}\xi\otimes \xi f_{\varepsilon} \ \dd \xi\Big)+BJ_{\varepsilon}^{\perp}+\rho_{\varepsilon}\nabla_{x} \Phi_{\varepsilon}=0,    
\end{align}
which is \eqref{equation for Jepsilon}. 
\end{proof}
Now formally assume $\rho_\varepsilon>0$ and take $f_{\varepsilon}$ to be monokinetic, {\it i.e.,} $f_{\varepsilon}(t,x,\xi)=\rho_{\varepsilon}(t,x)\delta(\xi-\frac{J_{\varepsilon}(t,x)}{\rho_{\varepsilon}(t,x)})$ so that \eqref{equation for Jepsilon} becomes 
\begin{align}
\partial_{t}J_{\varepsilon}+\mathrm{div}_{x}\Big(\frac{J_{\varepsilon}\otimes J_{\varepsilon}}{\rho_{\varepsilon}}\Big)
+BJ_{\varepsilon}^{\perp}+\rho_{\varepsilon}\nabla_{x}\Phi_{\varepsilon}=0.   \label{evolution of curremt for monokinetic ansatz}  
\end{align}
Taking the divergence in $x$ of \eqref{evolution of curremt for monokinetic ansatz} and using that $\mathrm{div}_{x}J_{\varepsilon}=-\partial_{t}\rho_{\varepsilon}=\varepsilon \partial_{t}\Delta_{x}\Phi_{\varepsilon}$, we obtain  
\begin{align}
&\varepsilon \partial_{tt}\Delta_{x} \Phi_{\varepsilon}+\mathrm{div}_{x}\mathrm{div}_{x}\Big(\frac{J_{\varepsilon}\otimes J_{\varepsilon}}{\rho_{\varepsilon}}\Big)\notag+\mathrm{div}_{x}(BJ_{\varepsilon}^{\perp})+\mathrm{div}_{x}(\nabla_{x} \Phi_{\varepsilon}(1-\varepsilon\Delta_{x} \Phi_{\varepsilon}))=0. 
\label{equation for deltaphieps}       
\end{align} 
To conclude, we arrive at the following system of equations:  
\begin{align*}
\begin{cases}
\partial_{t}\rho_{\varepsilon}+\mathrm{div}_{x}(J_{\varepsilon})=0,\\
\partial_{t}J_{\varepsilon}+\mathrm{div}_{x}\big(\frac{J_{\varepsilon}\otimes J_{\varepsilon}}{\rho_{\varepsilon}}\big)+BJ_{\varepsilon}^{\perp}+\rho_{\varepsilon}\nabla_{x}\Phi_{\varepsilon}=0,\\
-\Delta_{x} \Phi_{\varepsilon}=\mathrm{div}_{x}(BJ_{\varepsilon}^{\perp})
+\mathrm{div}_{x}\mathrm{div}_{x}\big(\frac{J_{\varepsilon}\otimes J_{\varepsilon}}{\rho_{\varepsilon}}\big)+\varepsilon\partial_{tt}\Delta_{x} \Phi_{\varepsilon}-\varepsilon\mathrm{div}_{x}(\nabla_{x}\Phi_{\varepsilon}\Delta_{x}\Phi_{\varepsilon}). 
\end{cases}    
\end{align*}
Therefore, under some uniform bounds sufficient to pass to the limit, 
letting $\varepsilon\rightarrow 0$, we formally obtain
\begin{align*}
\begin{cases}
\partial_{t}v+\mathrm{div}_{x}(v\otimes v)+Bv^{\perp}+\nabla_{x}p=0,\\
\mathrm{div}_{x}v=0,
\end{cases}
\end{align*}
so that
$$
-\Delta_{x} p=\mathrm{div}_{x}\mathrm{div}_{x}(v\otimes v)+\mathrm{div}_{x}(Bv^{\perp}). 
$$

\smallskip
\section{Well-Posedness of \eqref{magnetized Euler}}\label{sec well posedness}

In this section, we prove the existence and uniqueness of global classical solutions 
of the Cauchy problem associated with \eqref{magnetized Euler}. 
In the following lemma, we establish the equivalence between the velocity and vorticity formulation 
of the magnetized incompressible Euler equations, 
as it will be more convenient to prove the well-posedness for the equations 
in vorticity formulation. 

\begin{lem} \label{Hodge decomposition lemma}
Let $v$ be a smooth vector field on $\mathbb{T}^{2}$, 
and let $\omega $ be a smooth scalar field on $\mathbb{T}^{2}$ 
such that $\int_{\mathbb{T}^{2}}\omega(x)\ \dd x=0$ and 
\begin{align*}
\omega=\mathrm{curl}_{x}(v)=-\mathrm{div}_{x}(v^{\perp}).     
\end{align*}
Then there is a constant vector $m\in \mathbb{R}^{2}$ and smooth scalar functions $\phi$ and $\psi$
such that 
\begin{equation}\label{rep of v}
v=m+\nabla_{x}\phi+\nabla_{x}^{\perp}\psi, \quad \omega=\Delta_{x}\psi,
\quad \int_{\mathbb{T}^{2}}\psi(x)\, \dd x=0.  
\end{equation}
\end{lem}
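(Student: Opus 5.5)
We construct $m$, $\phi$ and $\psi$ directly, by solving Poisson equations on $\mathbb{T}^2$ and checking the decomposition with Fourier series. The only input we need is the solvability of the Poisson equation on the torus: given a smooth $g$ with $\int_{\mathbb{T}^2}g\,\dd x=0$, there is a unique smooth $h$ with $\Delta_x h=g$ and $\int_{\mathbb{T}^2}h\,\dd x=0$. In Fourier variables this is $\hat h(k)=-\hat g(k)/|k|^2$ for $k\neq 0$ and $\hat h(0)=0$. Smoothness is preserved because the Fourier coefficients still decay rapidly.

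\textbf{Step 1: definitions.} We set $m:=\int_{\mathbb{T}^2}v\,\dd x$, which is the zero Fourier mode of $v$. Since $\int_{\mathbb{T}^2}\omega\,\dd x=0$ by hypothesis, we let $\psi$ be the mean-zero solution of $\Delta_x\psi=\omega$. Since $\mathrm{div}_x v$ is a divergence on the torus, it automatically has zero mean, so we let $\phi$ be the mean-zero solution of $\Delta_x\phi=\mathrm{div}_x v$.

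\textbf{Step 2: the remainder vanishes.} Define $w:=v-m-\nabla_x\phi-\nabla_x^{\perp}\psi$. We check its mean, divergence and curl in turn.
\begin{itemize}
\item $\int_{\mathbb{T}^2}w\,\dd x=0$, because gradients and perpendicular gradients of periodic functions have zero mean.
\item $\mathrm{div}_x w=\mathrm{div}_x v-\Delta_x\phi-\mathrm{div}_x\nabla_x^\perp\psi=0$, using $\mathrm{div}_x\nabla_x^\perp=0$.
\item $\mathrm{curl}_x w=-\mathrm{div}_x(w^\perp)=\omega-\mathrm{curl}_x\nabla_x\phi-\mathrm{curl}_x\nabla^\perp_x\psi$. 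Here $\mathrm{curl}_x\nabla_x\phi=0$, and with $\nabla^\perp=(-\partial_2,\partial_1)$ we get $\mathrm{curl}_x\nabla_x^\perp\psi=\partial_1\partial_1\psi+\partial_2\partial_2\psi=\Delta_x\psi=\omega$. Hence $\mathrm{curl}_x w=0$.
\end{itemize}

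In Fourier variables, for $k\neq0$ the two conditions read $k\cdot\hat w(k)=0$ and $k^\perp\cdot\hat w(k)=0$. Since $k$ and $k^\perp$ form a basis of $\mathbb{C}^2$, this forces $\hat w(k)=0$. Together with $\hat w(0)=0$, we obtain $w\equiv0$, which is \eqref{rep of v}.

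\textbf{Main obstacle.} The only delicate point is the sign convention for $\nabla^\perp$ and $\mathrm{curl}$. One must check that, with $u^\perp=(-u_2,u_1)$, the definition $\mathrm{curl}_x v=-\mathrm{div}_x(v^\perp)$ gives $\mathrm{curl}_x\nabla^\perp_x\psi=+\Delta_x\psi$, so that $\omega=\Delta_x\psi$ holds with the sign stated rather than its negative. The computation in the third bullet above confirms this. The rest is routine; in particular, in the later application where $\mathrm{div}_x v=0$, the same construction gives $\phi=0$ by uniqueness.
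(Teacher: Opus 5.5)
Your proof is correct. It differs from the paper's in how the curl-free part is handled.

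Both arguments begin the same way: $\psi$ is the mean-zero solution of $\Delta_x\psi=\omega$, so $\nabla_x^\perp\psi$ carries the vorticity. The paper then notes that $v-\nabla_x^\perp\psi$ is curl-free and appeals to ``Poincar\'e's lemma'' to write it as $m+\nabla_x\phi$. On $\mathbb{T}^2$ this is really the fact that a closed $1$-form is exact up to a constant-coefficient form. The standard proof lifts to $\mathbb{R}^2$, finds a potential $\Phi$, observes that $\Phi(x+e_j)-\Phi(x)=m_j$ is constant, and sets $\phi=\Phi-m\cdot x$.

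You bypass this topological input entirely. You set $m=\int_{\mathbb{T}^2}v\,\dd x$, take $\phi$ as the mean-zero solution of $\Delta_x\phi=\mathrm{div}_x v$, and show by Fourier series that a field with zero mean, divergence and curl vanishes.

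The paper's route is shorter and coordinate-free. Yours is self-contained and yields more:
\begin{itemize}
\item explicit formulas for $m$ and $\phi$;
\item uniqueness of the decomposition under the mean-zero normalization;
\item $\phi=0$ immediately when $\mathrm{div}_x v=0$.
\end{itemize}
The last point is exactly what is needed in Lemma~\ref{lemma3.2}. There the paper instead takes the divergence to get $\Delta_x\phi=0$, concludes that $\phi$ is constant, and separately integrates to identify $m$ with $\bar v$.
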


\begin{proof}
First, recall that $v_{\ast}=-\nabla_{x}^{\perp}V\ast \omega$ 
is a \textit{specific} solution determined by $\omega$ 
to the equation $\mathrm{curl}_{x}(v)=\omega$. 
Equivalently,   $v_{\ast}$ is the unique solution to 
\begin{align*}
v_{\ast}=\nabla_{x}^{\perp}\psi, \quad \Delta_{x} \psi=\omega, 
\quad \int_{\mathbb{T}^{2}}\psi(x)\ \dd x=0.      
\end{align*}
Suppose that $v$ is another solution satisfying  $\mathrm{curl}_{x}(v)=\omega$,  
so that $\mathrm{curl}_{x}(v-v_{\ast})=0$. 
By Poincar\'e's lemma, there is some constant $m\in \mathbb{R}^{2}$ and some 
smooth  scalar function $\phi\in C^{\infty}
(\mathbb{T}^{2})$ such that 
\begin{align*}
v-v_{\ast}=m+\nabla_{x}\phi\Rightarrow v=m+\nabla_{x}\phi+v_{\ast},     
\end{align*}
which establishes \eqref{rep of v}. 
\end{proof}

\begin{lem}\label{lemma3.2}
Suppose that $(v,p)$ is a classical solution to  \eqref{magnetized Euler}. 
Set 
\begin{align*}
\bar{v}(t):= \int_{\mathbb{T}^{2}}v(t,x)\ \dd x, \quad 
\omega=\mathrm{curl}_{x}(v)=-\mathrm{div}_{x}(v^{\perp}).  \end{align*} 
Then $(\omega,v)$ is a classical solution to the system{\rm:}
\begin{align}
\begin{cases}
\partial_{t}\omega+\mathrm{div}_{x}(v\omega)+\nabla_{x} B\cdot v = 0, \\[1mm]
\partial_t\bar{v}=-\int_{\mathbb{T}^{2}}B(t,x)v^{\perp}(t,x)\ \dd x,\\[1mm]
\ \omega =\Delta_{x} \psi,\quad v=\bar{v}(t)+\nabla^{\perp}_{x}\psi,
\quad \int_{\mathbb{T}^{2}}\psi(t,x)\ \dd x=0.
\end{cases}  
\label{vorticity formulation lemma}
\end{align}
Conversely, if  $(\omega,v)$ is a classical solution to
\eqref{vorticity formulation},  define 
$$
p=(-\Delta_{x})^{-1}
\big(\mathrm{div}_{x}\mathrm{div}_{x}(v\otimes v)+\mathrm{div}_{x}(Bv^{\perp})\big),
\quad \int_{\mathbb T^2}p\,\dd x=0.
$$ 
Then $(v,p)$ is a classical solution to  \eqref{magnetized Euler}.  \label{vorticity formulation}
\end{lem}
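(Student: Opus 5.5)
The plan is to prove both directions by direct computation, using Lemma \ref{Hodge decomposition lemma} for the reconstruction of $v$ from $\omega$ and $\bar v$.

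\emph{Forward direction.} Apply $\mathrm{curl}_x=-\mathrm{div}_x((\cdot)^{\perp})$ to the momentum equation in \eqref{magnetized Euler}.
\begin{itemize}
\item The pressure term drops out, since $\mathrm{curl}_x\nabla_x p=0$.
\item Since $\mathrm{div}_x v=0$, Lemma \ref{elementary identities}(i) gives $\mathrm{div}_x(v\otimes v)=(v\cdot\nabla_x)v$. The standard two-dimensional identity then yields $\mathrm{curl}_x((v\cdot\nabla_x)v)=v\cdot\nabla_x\omega+\omega\,\mathrm{div}_x v=\mathrm{div}_x(v\omega)$.
\item For the Lorentz term, $(Bv^{\perp})^{\perp}=-Bv$, so $\mathrm{curl}_x(Bv^{\perp})=\mathrm{div}_x(Bv)=\nabla_x B\cdot v+B\,\mathrm{div}_x v=\nabla_x B\cdot v$.
\end{itemize}
This gives the first equation of \eqref{vorticity formulation lemma}.

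Integrating the momentum equation over $\mathbb T^2$ kills $\mathrm{div}_x(v\otimes v)$ and $\nabla_x p$ by periodicity. What remains is $\partial_t\bar v=-\int_{\mathbb T^2}Bv^{\perp}\,\dd x$, the second equation.

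The third line comes from Lemma \ref{Hodge decomposition lemma}, which writes $v=m+\nabla_x\phi+\nabla_x^{\perp}\psi$. Here $\int\omega\,\dd x=0$ automatically, because $\omega$ is a divergence.
\begin{itemize}
\item Taking the divergence gives $\Delta_x\phi=\mathrm{div}_x v=0$. On the torus this forces $\phi$ to be constant, so $\nabla_x\phi=0$.
\item Averaging over $\mathbb T^2$ then shows $m=\bar v(t)$.
\end{itemize}
Smoothness in $t$ is inherited from that of $v$.

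\emph{Converse direction.} Let $(\omega,v)$ solve \eqref{vorticity formulation lemma} and define $p$ as stated. The solvability of the Poisson problem for $p$ is the point that needs care, and I expect it to be the main obstacle, since the right-hand side must have zero mean. It does, being a divergence. Since $v=\bar v+\nabla_x^{\perp}\psi$, we also have $\mathrm{div}_x v=0$ directly.

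Set $W:=\partial_t v+\mathrm{div}_x(v\otimes v)+Bv^{\perp}+\nabla_x p$. The goal is $W=0$, which I obtain by showing that the curl, the divergence and the mean of $W$ all vanish.
\begin{itemize}
\item \emph{Curl.} We have $\mathrm{curl}_x\partial_t v=\partial_t\omega$, because $\bar v$ is constant in $x$. The computation of the forward step, read backwards, gives $\mathrm{curl}_x W=\partial_t\omega+\mathrm{div}_x(v\omega)+\nabla_x B\cdot v=0$.
\item \emph{Divergence.} Using $\mathrm{div}_x\partial_t v=0$ and the definition of $p$ as the solution of $-\Delta_x p=\mathrm{div}_x\mathrm{div}_x(v\otimes v)+\mathrm{div}_x(Bv^{\perp})$, we get $\mathrm{div}_x W=0$.
\item \emph{Mean.} We have $\int W\,\dd x=\dot{\bar v}+\int Bv^{\perp}\,\dd x=0$ by the $\bar v$ equation.
\end{itemize}

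To conclude, note that a smooth vector field on $\mathbb T^2$ with zero curl, zero divergence and zero mean vanishes. By Lemma \ref{Hodge decomposition lemma} with $\omega=0$, it equals $m+\nabla_x\phi$ with $\phi$ harmonic, hence $\phi$ constant, and $m=0$ because the mean is zero. Therefore $W=0$, and $(v,p)$ solves \eqref{magnetized Euler} with $\int p\,\dd x=0$.
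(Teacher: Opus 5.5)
Your proposal is correct and follows the paper's proof. In the forward direction you take the curl of the momentum equation, average over $\mathbb{T}^2$, and apply the Hodge decomposition lemma; in the converse you show that the residual has zero curl and zero mean. The only cosmetic difference is that you put the prescribed $\nabla_x p$ into $W$ and also check $\mathrm{div}_x W=0$, whereas the paper first obtains $F=-\nabla_x p$ from Poincar\'e's lemma and then identifies $p$ by taking the divergence; both rest on the same Hodge-type rigidity on $\mathbb{T}^2$.
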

\begin{proof} We divide the proof into two steps.

\smallskip
\textbf{1}. 
Suppose that $(v,p)$ satisfies \eqref{magnetized Euler}. 
We wish to show that $(\omega,v)$ is a solution to \eqref{vorticity formulation lemma}. 

First, we have
\begin{align*}
\partial_{t}v^{\perp}+(vD_{x}v)^{\perp}-Bv+(\nabla_{x} p)^{\perp}=0.    
\end{align*}
Taking the divergence and using that $\mathrm{div}_{x}(\nabla^{\perp}_{x} p)=0$ and 
$\mathrm{div}_{x}(v)=0$, we obtain
\begin{align*}
\partial_{t}\omega-\mathrm{div}_{x}(vD_{x} v)^{\perp}+\nabla_{x} B\cdot v=0.  
\end{align*}
We claim that the following identity holds: 
\begin{align}
-\mathrm{div}_{x}(vD_{x} v)^{\perp}=\mathrm{div}_{x}(v\omega). 
\label{div of vnablav}
\end{align} 
To prove \eqref{div of vnablav}, we start by computing the right-hand side: 
\begin{align}
\mathrm{div}_{x}(v\omega)&=v\cdot \nabla_{x} \omega=-v\cdot \nabla_{x} \mathrm{div}_{x}(v^{\perp})=-(v_{1},v_{2})\cdot \nabla_{x} (-\partial_{x_{1}}v_{2}+\partial_{x_{2}}v_{1}) \notag\\
&=-(v_{1},v_{2})\cdot (-\partial_{x_{1}x_{1}}v_{2}+\partial_{x_{1}x_{2}}v_{1},
\,-\partial_{x_{1}x_{2}}v_{2}+\partial_{x_{2}x_{2}}v_{1})\notag\\
&=v_{1}\partial_{x_{1}x_{1}}v_{2}-v_{1}\partial_{x_{1}x_{2}}v_{1}+v_{2}\partial_{x_{1}x_{2}}v_{2}-v_{2}\partial_{x_{2}x_{2}}v_{1}. \label{div(v omega)}
\end{align}
We proceed by computing the left-hand side of \eqref{div of vnablav}: 
\begin{align}
-\mathrm{div}_{x}(vD_{x}v)^{\perp}&=-\mathrm{div}_{x}(v_{1}\partial_{x_{1}}v_{1}+v_{2}\partial_{x_{2}}v_{1},v_{1}\partial_{x_{1}}v_{2}+v_{2}\partial_{x_{2}}v_{2})^{\perp} \notag\\
&=\mathrm{div}_{x}(v_{1}\partial_{x_{1}}v_{2}+v_{2}\partial_{x_{2}}v_{2},-v_{1}\partial_{x_{1}}v_{1}-v_{2}\partial_{x_{2}}v_{1})\notag\\
&=v_{1}\partial_{x_{1}x_{1}}v_{2}+v_{2}\partial_{x_{1}x_{2}}v_{2}-v_{1}\partial_{x_{1}x_{2}}v_{1}-v_{2}\partial_{x_{2}x_{2}}v_{1}. \label{-div vnabla v}
\end{align}
Note that, in the last equation, we have used the identity: 
\begin{align*}
&\partial_{x_{1}}v_{1}\partial_{x_{1}}v_{2}+\partial_{x_{1}}v_{2}\partial_{x_{2}}v_{2}-\partial_{x_{2}}v_{1}\partial_{x_{1}}v_{1}-\partial_{x_{2}}v_{2}\partial_{x_{2}}v_{1}\\
&=\partial_{x_{1}}v_{1}(\partial_{x_{1}}v_{2}-\partial_{x_{2}}v_{1})+\partial_{x_{2}}v_{2}(\partial_{x_{1}}v_{2}-\partial_{x_{2}}v_{1})\underset{(\partial_{x_{1}}v_{1}=-\partial_{x_{2}}v_{2})}{=}0.   
\end{align*}
Comparing \eqref{-div vnabla v} with \eqref{div(v omega)} yields identity \eqref{div of vnablav},
which establishes \eqref{vorticity formulation lemma}$_{\mathrm{a}}$.  

Identity \eqref{vorticity formulation lemma}$_{\mathrm{b}}$ follows 
by integrating \eqref{magnetized Euler} in $x$ and noticing that $\mathrm{div}_{x}(v\otimes v)$ and $\nabla_{x}p$ are free derivatives. 

We move to show \eqref{vorticity formulation lemma}$_{\mathrm{c}}$. 
By Lemma \ref{Hodge decomposition lemma},  
the relation $\omega(t,\cdot)=\mathrm{curl}_{x}v(t,\cdot)$ implies that there is 
some time-dependent $\widetilde{\bar{v}}(t)\in \mathbb{R}^{2}$ 
and smooth scalar functions $\psi(t,x)$ and $\phi(t,x)$ such that
\begin{align*}
v(t,\cdot)=\widetilde{\bar{v}}(t)+\nabla_{x}\phi(t,x)+\nabla_{x}^{\perp}\psi(t,x), 
\quad \Delta_{x} \psi(t,x)=\omega, \quad \int_{\mathbb{T}^{2}}\psi(t,x)\ \dd x=0. 
\end{align*}
Taking the divergence in $x$ and using that  $\mathrm{div}_{x}v=\mathrm{div}_{x}\mathrm{\nabla}_{x}^{\perp}\psi=0$, 
we find that $\Delta_{x}\phi=0$ so that $\phi$ must be constant. It follows that 
\begin{align*}
v(t,x)=\widetilde{\bar{v}}(t)+\nabla_{x}^{\perp}\psi(t,x).    
\end{align*}
Finally, integrating in $x$ and using that $\int_{\mathbb{T}^{2}}\nabla_{x}^{\perp}\psi(t,x)\ \dd x=0$,
we conclude
\begin{align*}
\widetilde{\bar{v}}(t)=\int_{\mathbb{T}^{2}}v(t,x)\ \dd x=\bar{v}(t).     
\end{align*}

\smallskip
\textbf{2}. Conversely, suppose that $(\omega,v)$ solves
\eqref{vorticity formulation lemma}. 
Note that 
\begin{align*}
v^{\perp}=\bar{v}^{\perp}(t)-\nabla_{x}(\Delta_{x})^{-1}\omega.    
\end{align*}
By taking the divergence in $x$, we see that
$\omega=-\mathrm{div}_{x}v^{\perp}=\mathrm{curl}_{x}(v)$. 
Define
\[
F:=
\partial_tv
+\mathrm{div}_x(v\otimes v)
+Bv^\perp.
\]
Using \eqref{vorticity formulation lemma}$_{\mathrm{a}}$, we obtain
\[
\begin{aligned}
\mathrm{curl}_xF&= \partial_{t}\mathrm{curl}_{x}(v)+\mathrm{curl}_{x}\mathrm{div}_{x}(v\otimes v)+\mathrm{curl}_{x}(Bv^{\perp})
\\&=
\partial_t\omega
+\mathrm{curl}_x\bigl(vD_xv\bigr)
+\mathrm{curl}_x(Bv^\perp)\\
&=
\partial_t\omega
+\mathrm{div}_x(v\omega)
+\nabla_xB\cdot v=0.
\end{aligned}
\]
In the third equality, we used the identities: $\mathrm{curl}_{x}(vD_{x}v)=\mathrm{div}_{x}((\mathrm{curl}_{x}v)v)$ (equation \eqref{div of vnablav}) and 
\begin{align*}
\mathrm{curl}_{x}(Bv^{\perp})=\mathrm{div}_{x}(Bv)=\nabla_{x}B\cdot v.     
\end{align*}
Furthermore, we have
\[
\begin{aligned}
\int_{\mathbb T^2}F(t,x)\,\dd x
&=
\dot{\bar{v}}(t)
+\int_{\mathbb T^2}B(t,x)v^\perp(t,x)\,\dd x=0,
\end{aligned}
\]
because the integral of
$\operatorname{div}_x(v\otimes v)$ vanishes. 
It follows that $F(t,\cdot)$ is a curl free vector 
field with zero
mean and therefore, by Poincar\'e's lemma, there exists some smooth scalar function 
$p(t,\cdot)$ such that $F(t,x)=-\nabla_{x}p(t,x)$ so that
\begin{align}
\partial_{t}v+\mathrm{div}_{x}(v\otimes v)+Bv^{\perp}+\nabla_{x}p=0.\label{eq for velocity lemma}
\end{align}
By taking the divergence of \eqref{eq for velocity lemma}, we conclude 
\begin{align*}
p=(-\Delta_{x})^{-1}\big(\mathrm{div}_{x}\mathrm{div}_{x}(v\otimes v)
+\mathrm{div}_{x}(Bv^{\perp})\big).    
\end{align*}
\end{proof}

In view of  Lemma \ref{lemma3.2}, the well-posedness problem for \eqref{magnetized Euler} is reduced 
to the well-posedness problem for \eqref{vorticity formulation intro}, 
which is our next objective. Recall the following endpoint Calderon-Zygmund-type theorem.   

\begin{prop}[\cite{bahouri2011fourier},\,Proposition 7.7]\label{near boundedness}  
Let $s\in(0,1)$ and $\omega\in L^1(\mathbb T^2)\cap C^{0,s}(\mathbb T^2).$ 
Then there is some $C>0$ such that 
\begin{align*}
\big\Vert D_{x}\nabla_{x}^{\perp} V\ast \omega \big\Vert_{\infty}
\leq 
C\big(\Vert \omega\Vert_{1} 
+\Vert \omega\Vert_{\infty}(1+|\log\Vert \omega\Vert_{\infty}|\big)\log(e+\left\Vert \omega\right\Vert_{\infty}+\left\Vert \omega \right\Vert_{C^{0,s}})\big).
\end{align*} 
\end{prop}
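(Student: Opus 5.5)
The statement is the classical endpoint Calder\'on--Zygmund (log-Lipschitz) estimate for the Biot--Savart law. I would prove it by a Littlewood--Paley decomposition on $\mathbb{T}^2$, splitting $D_x\nabla_x^\perp V\ast\omega$ into low, intermediate and high frequencies and optimizing the frequency cutoff.

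\textbf{Setup.} Write $u=\nabla_x^\perp V\ast\omega$. On the Fourier side, $\widehat{D_x u}(k)=m(k)\hat\omega(k)$ for $k\neq0$, where $m$ is a matrix of homogeneous multipliers of degree $0$ (entries of the form $k_ik_j/|k|^2$). Let $(\Delta_j)_{j\ge -1}$ be the periodic dyadic blocks. On the torus the block $\Delta_{-1}$ involves only finitely many modes with $|k|\lesssim1$, so $\|\Delta_{-1}D_xu\|_\infty\le \sum_{0<|k|\lesssim1}|\hat\omega(k)|\le C\|\omega\|_1$.

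\textbf{Dyadic bounds.} For $j\ge0$, the operator $m(D)\Delta_j$ has kernel $2^{2j}K(2^j\cdot)$ (periodized) with $K\in L^1$ uniformly in $j$. Hence
\[
\|\Delta_jD_xu\|_\infty\le C\|\tilde\Delta_j\omega\|_\infty ,
\]
where $\tilde\Delta_j$ denotes a slightly enlarged block. This single estimate is used in two ways:
\begin{itemize}
\item for $0\le j\le N$, bound $\|\tilde\Delta_j\omega\|_\infty\le C\|\omega\|_\infty$;
\item for $j>N$, use the H\"older characterization $\|\tilde\Delta_j\omega\|_\infty\le C2^{-js}\|\omega\|_{C^{0,s}}$.
\end{itemize}
Summing the three ranges gives
\[
\|D_xu\|_\infty\le C\Big(\|\omega\|_1+(N+1)\|\omega\|_\infty+\frac{2^{-Ns}}{1-2^{-s}}\|\omega\|_{C^{0,s}}\Big).
\]
The constant depends on $s$, which the statement allows.

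\textbf{Choice of $N$.} This is the only real bookkeeping step. Take $N$ to be the smallest integer with
\[
2^{Ns}\ge \frac{e+\|\omega\|_{C^{0,s}}}{\|\omega\|_\infty}.
\]
(If $\omega\equiv0$ there is nothing to prove.) Then the high-frequency term is $\lesssim\|\omega\|_\infty$, and
\[
N+1\lesssim_s 1+\log\big(e+\|\omega\|_{C^{0,s}}\big)+\big|\log\|\omega\|_\infty\big|.
\]
Since $\log(e+A)\ge1$, this is bounded by $C\big(1+|\log\|\omega\|_\infty|\big)\log\big(e+\|\omega\|_\infty+\|\omega\|_{C^{0,s}}\big)$. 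Inserting this bound yields the claimed inequality.

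\textbf{Main obstacle.} The only delicate point is the dyadic bound for the degree-zero multiplier, i.e.\ the uniform $L^1$ bound on the periodized kernels of $m(D)\Delta_j$. This follows by the standard argument, via the Poisson summation of the $\mathbb{R}^2$ kernels and their Schwartz decay. Everything else is elementary summation.
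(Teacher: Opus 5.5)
Your proof is correct and follows essentially the same approach as the paper: the paper gives no argument of its own and cites \cite{bahouri2011fourier}, Proposition 7.7, where the estimate is proved by this same Littlewood--Paley splitting (low frequencies controlled by $\|\omega\|_{1}$, intermediate ones by $\|\omega\|_{\infty}$, high ones by the H\"older norm) followed by an optimized choice of the cutoff $N$. That reference works on $\mathbb{R}^2$; your periodization of the dyadic kernels correctly transfers the argument to $\mathbb{T}^2$, and your final bound on $N+1$ correctly produces the paper's stated form $(1+|\log\|\omega\|_{\infty}|)\log\big(e+\|\omega\|_{\infty}+\|\omega\|_{C^{0,s}}\big)$.
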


\begin{thm}\label{Well posedness of EB}
Fix $T>0$. Suppose that $\omega^{\mathrm{in}}\in W^{1,\infty}(\mathbb{T}^{2})$ and
$\int_{\mathbb T^2}\omega^{\mathrm{in}}\,\dd x=0.$
Assume also that
$
B\in W^{1,\infty}(\mathbb{R}_{+};W^{2,\infty}(\mathbb{T}^{2})).
$
Then
\begin{enumerate}
   \item[{\rm(i)}]There exists a unique solution $(\omega,v)\in W^{1,\infty}([0,T]\times \mathbb{T}^{2})\times W^{1,\infty}([0,T]\times \mathbb{T}^{2})$ of the Cauchy problem \eqref{vorticity formulation intro} with initial data $\omega^{\mathrm{in}}$. 
Consequently, there exists a unique solution $v\in W^{1,\infty}([0,T]\times \mathbb{T}^{2})$ of 
the Cauchy problem \eqref{magnetized Euler} with initial data $v^{\mathrm{in}}=-\nabla_{x}^{\perp}V\ast \omega^{\mathrm{in}}$. 
 \item[{\rm(ii)}]For any $r\in [1,\infty)$,
 \begin{align}
D^{2}_{x}v\in L^{\infty}([0,T];L^{r}(\mathbb{T}^{2})), \qquad 
\partial_{t}D_{x}v\in L^{\infty}([0,T];L^{r}(\mathbb{T}^{2})). \label{useful est on vel}    
\end{align}
\end{enumerate}

\end{thm}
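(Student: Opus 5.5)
We will work with the modified vorticity $\Omega:=\omega+B$. Since $\mathrm{div}_x v=0$, the vorticity equation in \eqref{vorticity formulation intro} becomes the forced transport equation
$$\partial_t\Omega+v\cdot\nabla_x\Omega=\partial_tB,$$
which we couple to the ODE \eqref{1.4a} for $\bar v(t)$ and to the Biot--Savart law $v=\bar v-\nabla_x^\perp V\ast(\Omega-B)$. The plan is to build local solutions by a standard iteration, prove a priori bounds that rule out blow-up on $[0,T]$, prove uniqueness by an $L^2$ (or Yudovich-type) energy argument, and finally recover the velocity formulation through Lemma \ref{lemma3.2}.

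\textbf{Step 1: a priori bounds.} Along the flow $X$ of $v$, which is well defined once $v$ is log-Lipschitz,
$$\Vert\Omega(t)\Vert_\infty\le\Vert\Omega(0)\Vert_\infty+t\Vert\partial_tB\Vert_{L^\infty}.$$
Hence $\Vert\omega(t)\Vert_\infty\le C(T)$, with $C(T)$ depending only on $\Vert\omega^{\mathrm{in}}\Vert_\infty$ and the norm of $B$. Since $\bar v$ solves a linear ODE,
$$|\dot{\bar v}|\le\Vert B\Vert_\infty\big(|\bar v|+\Vert\nabla^\perp V\ast\omega\Vert_{2}\big)\le\Vert B\Vert_\infty\big(|\bar v|+C\Vert\omega\Vert_{2}\big),$$
and Gr\"onwall gives $|\bar v(t)|\le C(T)$. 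For the Lipschitz norm, differentiate the transport equation:
$$\partial_t\nabla\Omega+v\cdot\nabla(\nabla\Omega)=-(D_xv)\nabla\Omega+\nabla\partial_tB.$$
This yields
$$\frac{\dd}{\dd t}\Vert\nabla\Omega\Vert_\infty\le\Vert D_xv\Vert_\infty\Vert\nabla\Omega\Vert_\infty+\Vert\partial_tB\Vert_{W^{1,\infty}}.$$
We then apply Proposition \ref{near boundedness} with $\omega\in W^{1,\infty}\subset C^{0,s}$. Using $\Vert\omega\Vert_{C^{0,s}}\lesssim\Vert\nabla\Omega\Vert_\infty+\Vert B\Vert_{W^{1,\infty}}$ and the uniform $L^\infty$ bound on $\omega$, we get
$$\Vert D_xv\Vert_\infty\le C(T)\log\big(e+\Vert\nabla\Omega\Vert_\infty\big).$$
Setting $y=e+\Vert\nabla\Omega\Vert_\infty$, this gives $y'\le C\,y\log y+C$, so $y$ is bounded by a double exponential on $[0,T]$. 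This log-Lipschitz closure is the main obstacle. The step I expect to require care is making sure the constant in Proposition \ref{near boundedness} does not degenerate, namely that the factor $\Vert\omega\Vert_\infty(1+|\log\Vert\omega\Vert_\infty|)$ stays bounded. It does, because $x|\log x|$ is bounded on bounded sets. We must also keep track of the $W^{2,\infty}$ regularity of $B$, which is what enters through $\nabla\partial_tB$ and through $\nabla B$ in the $C^{0,s}$ norm. Together these give $\omega,\Omega\in L^\infty_tW^{1,\infty}$ and $v\in L^\infty_tW^{1,\infty}$. Time regularity then follows from the equations: $\partial_t\omega=-v\cdot\nabla\Omega+\partial_tB-\partial_tB$ is bounded, $\partial_t v=\dot{\bar v}-\nabla^\perp V\ast\partial_t\omega$ is bounded, so $(\omega,v)\in W^{1,\infty}([0,T]\times\mathbb T^2)$.

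\textbf{Step 2: existence and uniqueness.} For existence, we use the iteration scheme in which $\Omega^{n+1}$ is transported by $v^n$ and $\bar v^{n+1}$ solves the ODE driven by $v^n$. Alternatively, we mollify $\omega^{\mathrm{in}}$ and $B$, solve the smooth problem, and pass to the limit. The bounds of Step 1 hold uniformly, and Arzel\`a--Ascoli compactness yields a limit that solves \eqref{vorticity formulation intro}. For uniqueness, take two solutions and set $w=v_1-v_2$. Subtracting the velocity equations, the Lorentz term contributes
$$\int Bw^\perp\cdot w=0,$$
by the pointwise skew-symmetry noted in the introduction. The pressure drops out by incompressibility, so
$$\frac{\dd}{\dd t}\Vert w\Vert_2^2\le2\Vert D_xv_2\Vert_\infty\Vert w\Vert_2^2,$$
and Gr\"onwall gives $w=0$. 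Lemma \ref{lemma3.2} then transfers existence and uniqueness to \eqref{magnetized Euler}.

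\textbf{Step 3: part (ii).} We write $D^2_xv=D_x\nabla^\perp V\ast\nabla\omega$. Since $\nabla\omega\in L^\infty\subset L^r$, Calder\'on--Zygmund boundedness on $L^r$ for $1<r<\infty$ gives $D^2_xv\in L^\infty_tL^r$; the case $r=1$ follows because $\mathbb T^2$ has finite measure. Similarly $\partial_tD_xv=D_x\nabla^\perp V\ast\partial_t\omega$. Here $\partial_t\omega\in L^\infty$ by Step 1, and Calder\'on--Zygmund again gives $L^r$ bounds for every finite $r$.
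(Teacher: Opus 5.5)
Your proof is correct. The existence half is the paper's argument: the modified vorticity $\Omega=\omega+B$, the maximum principle for $\Vert\Omega\Vert_\infty$, Proposition~\ref{near boundedness} giving $\Vert D_xv\Vert_\infty\lesssim\log(e+\Vert\nabla_x\Omega\Vert_\infty)$, the double-exponential closure, and Arzel\`a--Ascoli. One small difference there: the paper regularizes the kernel ($V_\eta=\chi_\eta\ast V$), so the a priori bounds apply verbatim to the approximate problem. With your linear iteration, the log-Lipschitz bound must instead be propagated by induction, comparing each $\Vert\nabla_x\Omega^{n}\Vert_\infty$ with a fixed supersolution of $Y'=2CY\log Y$.

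You depart from the paper in two places: uniqueness and the bound on $\partial_tD_xv$.

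\emph{Uniqueness.} The paper stays in vorticity variables. It runs a coupled Gr\"onwall estimate on $\Vert\omega_1-\omega_2\Vert_2^2+\Vert v_1-v_2\Vert_2^2$, handles the mean-velocity ODE and the term $\nabla_x^\perp V\ast\partial_t(\omega_1-\omega_2)$ by substituting the difference equation, and uses $\nabla_x\omega_1\in L^\infty$. You first pass to the velocity formulation. This needs the converse direction of Lemma~\ref{lemma3.2} for $W^{1,\infty}$ solutions, which is harmless since $v(t)\in W^{2,r}$ for every $r<\infty$. You then run the classical energy estimate for $w=v_1-v_2$: the convective term $(v_1\cdot\nabla_x)w\cdot w$ and the pressure drop out by incompressibility, and the Lorentz term vanishes pointwise by skew-symmetry. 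Your route is shorter, needs only $D_xv_2\in L^\infty$, and uses exactly the cancellation the paper exploits at the kinetic level. The paper's route avoids reconstructing a pressure.

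\emph{Part (ii).} The paper differentiates the momentum equation and controls $\nabla_x^2p$ through $\mathfrak U$ by Calder\'on--Zygmund. You use $\partial_tD_xv=-D_x\nabla_x^\perp V\ast\partial_t\omega$ with $\partial_t\omega=-v\cdot\nabla_x\Omega\in L^\infty$. This reaches the same conclusion with a single Calder\'on--Zygmund application and no pressure estimate.
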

\begin{proof}
We divide the proof into three steps.

\smallskip
\textbf{1. }\textbf{Existence}.
Fix a standard mollifier $\chi_{\eta}$ on $\mathbb{T}^{2}$ and let $V_{\eta}:= \chi_{\eta}\ast V$. Consider a solution $\omega_{\eta}$  of the regularized Cauchy problem:  
\begin{align}
\begin{cases}
\partial_{t}\omega_{\eta}+\mathrm{div}_{x}(v_{\eta} (\omega_{\eta}+B))=0, \,\,\, \omega_{\eta}(0,\cdot)=\omega^{\mathrm{in}},\,\,\, v_{\eta}(0,\cdot)=-\nabla_{x}^{\perp}V_{\eta}\ast \omega^{\mathrm{in}}, \\[1mm]
\bar{v}_{\eta}(t)=-\int_{0}^{t}\int_{\mathbb{T}^{2}}B(\tau,x)v_{\eta}^{\perp}(\tau,x)\ \dd x \dd\tau,\\[1mm]
v_{\eta}(t,x)=\bar{v}_{\eta}(t)-\nabla_{x}^{\perp}V_{\eta}\ast \omega_{\eta}(t,x). 
\end{cases}
\label{regularized magnetized}
\end{align}
Define $\Omega_{\eta}:= \omega_{\eta}+B$ so that 
$\Omega_{\eta}$ is governed by the equation  
\begin{align}
\partial_{t}\Omega_{\eta}+v_{\eta}\cdot \nabla_{x}\Omega_{\eta}=\partial_{t}B. \label{eq for capital omega}   \end{align}
\textbf{Uniform in $\eta$ estimate on $\left\Vert \Omega_{\eta}\right\Vert_{W^{1,\infty}([0,T]\times \mathbb{T}^{2})}$ and $\left\Vert v_{\eta}\right\Vert_{W^{1,\infty}([0,T]\times \mathbb{T}^{2})}$.} We start by propagating the $W^{1,\infty}_{x}$--norm of $\Omega_{\eta}(t,\cdot)$ uniformly in $\eta$. 
Multiplying \eqref{eq for capital omega} by $\mathbf{s}(t,x)=\mathrm{sgn}(\Omega_{\eta}(t,x))$, we have
\begin{align}
\partial_{t}\left\vert \Omega_{\eta}\right\vert+v_{\eta}\cdot \nabla_{x}\left\vert \Omega_{\eta}\right\vert =\mathbf{s}\partial_{t}B. \label{eq for abs of Omega} \end{align}
Evaluating \eqref{eq for abs of Omega} at a maximum point $x=x^{\max}$ of $x\mapsto \left\vert \Omega_{\eta}\right\vert(t,x)$ and using $\nabla_{x}\left\vert \Omega_{\eta}\right\vert(t,x)=0,$ 
we obtain 
\begin{align*}
\frac{\dd}{\dd t}\left\Vert \Omega_{\eta}(t,\cdot)\right\Vert_{\infty}\leq \left\Vert \partial_{t}B\right\Vert_{\infty}.      
\end{align*}
Therefore, it follows that  
\begin{align}
\left\Vert \Omega_{\eta}(t,\cdot)\right\Vert_{\infty}\leq \left\Vert \omega^{\mathrm{in}}+B(0,\cdot)\right\Vert_{\infty}+T\left\Vert \partial_{t}B\right\Vert_{\infty}\leq \left\Vert \omega^{\mathrm{in}}\right\Vert_{\infty}+\left\Vert B\right\Vert_{\infty}+T\left\Vert \partial_{t}B\right\Vert_{\infty}:= A.           \label{Def of A}  
\end{align}
Moreover, taking the gradient in $x$ of \eqref{eq for capital omega} and multiplying by $\mathbf{s}'(t,x)=\frac{\nabla_{x}\Omega_{\eta}(t,x)}{\left\vert \nabla_{x}\Omega_{\eta}\right\vert(t,x)}$, we obtain 
\begin{align}
\partial_{t}\left\vert \nabla_{x}\Omega_{\eta}\right\vert+D_{x}v_{\eta}\nabla_{x}\Omega_{\eta}\cdot \mathbf{s}'+v_{\eta}\cdot \nabla_{x}\left\vert \nabla_{x}\Omega_{\eta}\right\vert=\nabla_{x}\partial_{t}B\cdot \mathbf{s}'.  \label{eq for abs nabla Omega}
\end{align}
Evaluating \eqref{eq for abs nabla Omega} at a point of maximum $y^{\max}=y$ of $x\mapsto \left\vert \nabla_{x}\Omega_{\eta}\right\vert(t,x)$, we see that 
\begin{align*}
\frac{\dd}{\dd t}\left\Vert \nabla_{x}\Omega_{\eta}(t,\cdot)\right\Vert_{\infty}+D_{x}v_{\eta}\nabla_{x}\Omega_{\eta}\cdot\mathbf{s}'(t,y)=\nabla_{x}\partial_{t}B\cdot \mathbf{s}'(t,y).      
\end{align*}
Therefore, it follows that  
\begin{align}
\frac{\dd}{\dd t}\left\Vert \nabla_{x}\Omega_{\eta}(t,\cdot)\right\Vert_{\infty}&\leq \left\Vert \nabla_{x}\partial_{t}B\right\Vert_{\infty}+\left\Vert D_{x}v_{\eta}(t,\cdot)\right\Vert_{\infty} \left\Vert \nabla_{x}\Omega_{\eta}(t,\cdot)\right\Vert_{\infty} \notag\\
&=\left\Vert \nabla_{x}\partial_{t}B\right\Vert_{\infty}+\big\Vert D_{x}\nabla_{x}^{\perp}V_{\eta}\ast(\Omega_{\eta}-B)(t,\cdot) \big\Vert_{\infty}\left\Vert \nabla_{x}\Omega_{\eta}(t,\cdot)\right\Vert_{\infty}. \label{est time der of nablaomega}         \end{align}
By Proposition \ref{near boundedness} and \eqref{Def of A}, we have 
\begin{align*}
&\big\Vert D_{x}\nabla_{x}^{\perp}V_{\eta}\ast (\Omega_{\eta}-B)(t,\cdot)\big\Vert_{\infty}\\
&\leq \big\Vert D_{x}\nabla_{x}^{\perp}V_{\eta}\ast \Omega_{\eta}(t,\cdot)\big\Vert_{\infty}
+\big\Vert D_{x}\nabla^{\perp}V_{\eta}\ast B(t,\cdot)\big\Vert_{\infty} \\
&\leq C\big(A+A(1+|\log(A)|)\log(e+A+\Vert \nabla_{x}\Omega_{\eta}(t,\cdot)\Vert_{\infty}  )\big)
+\Vert \nabla_{x}V\Vert_{1} \Vert D_{x}B\Vert_{\infty}. 
\end{align*}
Therefore, there exist some constant $\mathcal{C}>e$ such that 
\begin{align}
\big\Vert D_{x}\nabla_{x}^{\perp}V_{\eta}\ast (\Omega_{\eta}-B)(t,\cdot)\big\Vert_{\infty}
\leq \mathcal{C}\log(\mathcal{C}+\Vert \nabla_{x}\Omega_{\eta}(t,\cdot)\Vert_{\infty}). 
\label{est for hessian}  
\end{align}
Substituting \eqref{est for hessian} inside \eqref{est time der of nablaomega}, 
we deduce that 
there exist some $\mathcal{C}'\geq e$ such that 
\begin{align*}
\frac{\dd}{\dd t}\left\Vert \nabla_{x}\Omega_{\eta}(t,\cdot)\right\Vert_{\infty}\leq \log(\mathcal{C}'+\left\Vert \nabla_{x}\Omega_{\eta}(t,\cdot)\right\Vert_{\infty})
\big(\mathcal{C}'+\left\Vert \nabla_{x}\Omega_{\eta}(t,\cdot)\right\Vert_{\infty}\big).      
\end{align*}
Setting $S(t):= \mathcal{C}'+\left\Vert \nabla_{x}\Omega_{\eta}(t,\cdot)\right\Vert_{\infty}$, 
we infer the inequality: 
\begin{align*}
\frac{\dd}{\dd t}S(t)\leq S(t)\log(S(t))\,\,\Longrightarrow\,\, \frac{\dd}{\dd t}\log\log(S(t))\leq 1.
\end{align*}
Solving the above inequality yields 
\begin{align*}
S(t)\leq S(0)^{e^{t}}.     
\end{align*}
It follows that 
$\left\Vert \Omega_{\eta}(t,\cdot)\right\Vert_{W^{1,\infty}}$ is uniformly bounded in $\eta$, 
{\it i.e.}, 
\begin{align}
\left\Vert \Omega_{\eta}(t,\cdot)\right\Vert_{W^{1,\infty}}= O_{\eta}(1).  \label{W1infty of Omegaeta} \end{align}
Consequently, we can also bound the time-derivative of $\Omega_{\eta}$ uniformly in $\eta$. 
Differentiating in time $v_{\eta}=\bar{v}_{\eta}(t)+\nabla_{x}^{\perp}V_{\eta}\ast (\Omega_{\eta}-B)$,
we have
\begin{align*}
\frac{\dd}{\dd t}v_{\eta}(t,x)&=\dot{\bar{v}}_{\eta}(t)-\nabla_{x}^{\perp}V_{\eta}\ast \partial_{t}\omega_{\eta}(t,x)\\
&=-\int_{\mathbb{T}^{2}}B(t,x)v_{\eta}^{\perp}(t,x)\ \dd x+\nabla_{x}^{\perp}V_{\eta}\ast (v_{\eta}\cdot \nabla_{x}\Omega_{\eta})(t,x). 
\end{align*}
Therefore, in view of \eqref{W1infty of Omegaeta}, we deduce the estimate: 
\begin{align*}
\frac{\dd}{\dd t}\left\Vert v_{\eta}(t,\cdot)\right\Vert_{\infty}\leq \left\Vert B\right\Vert_{\infty} \left\Vert v_{\eta}(t,\cdot)\right\Vert_{\infty}+\left\Vert \nabla_{x}V\right\Vert_{1} \left\Vert \nabla_{x}\Omega_{\eta}(t,\cdot)\right\Vert_{\infty} \left\Vert v_{\eta}(t,\cdot)\right\Vert_{\infty}\leq O_{\eta}(1)\left\Vert v_{\eta}(t,\cdot)\right\Vert_{\infty}.          
\end{align*}
Thus, it follows by Gr\"onwall's lemma that 
\begin{align}
\left\Vert v_{\eta}\right\Vert_{\infty}=O_{\eta}(1). \label{veta est} \end{align}
By virtue of  \eqref{eq for capital omega} and \eqref{veta est}, we have
\begin{align}
\left\Vert \partial_{t}\Omega_{\eta}\right\Vert_{\infty}\leq \left\Vert \partial_{t} B\right\Vert_{\infty}+\left\Vert v_{\eta}\right\Vert_{\infty}\left\Vert \nabla_{x}\Omega_{\eta}\right\Vert_{\infty}\leq O_{\eta}(1)\,\,\Longrightarrow\,\,
\left\Vert \partial_{t}\Omega_{\eta}\right\Vert_{\infty}=O_{\eta}(1).   \label{time der of capital omega}      \end{align}
Now it follows directly from \eqref{W1infty of Omegaeta}--\eqref{time der of capital omega}, 
by observing that 
\begin{align}
\left\Vert \partial_{t}\omega_{\eta}\right\Vert_{\infty}\leq \left\Vert \partial_{t}\Omega_{\eta}\right\Vert_{\infty}+\left\Vert \partial_{t}B\right\Vert_{\infty}=O_{\eta}(1)\ \mbox{and}\ \left\Vert \nabla_{x}\omega_{\eta}\right\Vert_{\infty}\leq \left\Vert \nabla_{x}\Omega_{\eta}\right\Vert_{\infty}+\left\Vert \nabla_{x}B\right\Vert_{\infty}=O_{\eta}(1),  
\label{x der of omegaeta}        
\end{align}
that $\left\Vert \omega_{\eta} \right\Vert_{W^{1,\infty}([0,T]\times \mathbb{T}^{2})}$ 
is uniformly bounded in $\eta$. 

\smallskip
\textbf{Uniform in $\eta$ estimate on $\left\Vert v_{\eta}\right\Vert_{W^{1,\infty}([0,T]\times \mathbb{T}^{2})}$.} Taking the derivative in $x_{j}$, $j=1,2$, in the equation for $v_{\eta}$,
we have
\begin{align*}
\partial_{x_{j}}v_{\eta}=-\nabla_{x}^{\perp}V_{\eta}\ast \partial_{x_{j}}\omega_{\eta}    
\end{align*} 
so that, in view of \eqref{x der of omegaeta}, 
\begin{align*}
\left\Vert \partial_{x_{j}}v_{\eta}\right\Vert_{\infty}\leq \left\Vert\nabla_xV_\eta\right\Vert_1\left\Vert \nabla_{x}\omega_{\eta}\right\Vert_{\infty}\leq O_{\eta}(1)\Rightarrow\left\Vert D_{x}v_{\eta}\right\Vert_{\infty}=O_{\eta}(1).        
\end{align*}
In addition, we have 
\begin{align*}
\partial_{t}v_{\eta}=-\int_{\mathbb{T}^{2}}B(t,x)v^{\perp}_{\eta}(t,x)\ \dd x-\nabla_{x}^{\perp}V_{\eta}\ast \partial_{t}\omega_{\eta}   \end{align*} 
so that, by \eqref{veta est} and \eqref{x der of omegaeta},
\begin{align*}
\left\Vert \partial_{t}v_{\eta}\right\Vert_{\infty}\leq \left\Vert B\right\Vert_{\infty}\left\Vert v_{\eta}\right\Vert_{\infty}+\left\Vert\nabla_xV_\eta\right\Vert_1\left\Vert \partial_{t}\omega_{\eta}\right\Vert_{\infty}=O_{\eta}(1)\Rightarrow \left\Vert \partial_{t}v_{\eta}\right\Vert_{\infty}=O_{\eta}(1).  \end{align*}
This proves that $\left\Vert v_{\eta}\right\Vert_{W^{1,\infty}([0,T]\times \mathbb{T}^{2})}=O_{\eta}(1)$. 
So we have proved that 
\begin{align*}
 \left\Vert \omega_{\eta}\right\Vert_{W^{1,\infty}([0,T]\times \mathbb{T}^{2})}=O_{\eta}(1)\ \mbox{and}\  \left\Vert v_{\eta}\right\Vert_{W^{1,\infty}([0,T]\times \mathbb{T}^{2})}=O_{\eta}(1). 
 \end{align*}
 After extracting a subsequence, the Banach--Alaoglu Theorem gives weak-* convergence 
 of the first derivatives in $L^\infty$, while the uniform Lipschitz bounds 
 and the Arzela--Ascoli theorem give $(\omega_\eta,v_\eta)\to(\omega,v)$ uniformly on 
 $[0,T]\times\mathbb T^2$. 
 The uniform convergence permits passage to the nonlinear transport term; 
 the convergence $\nabla_xV_\eta\to\nabla_xV$ in $L^1(\mathbb T^2)$ identifies $v=\bar{v}-\nabla_x^\perp V*\omega$. 
 Therefore, $(\omega,v)$ solves \eqref{vorticity formulation intro}.

 \smallskip
\textbf{2. Uniqueness}. To achieve this, we apply a stability argument 
with respect to the $L^{2}$--norm of $(\omega,v)$. 
Given two solutions $(\omega_{1},v_{1})$ and $(\omega_{2},v_{2})$ 
to \eqref{vorticity formulation intro} 
with initial data $\omega_{1}^{\mathrm{in}}$ and $\omega_{2}^{\mathrm{in}}$, 
respectively,  the equation for the difference $\omega_{1}-\omega_{2}$ reads 
\begin{align*}
\partial_{t}(\omega_{1}-\omega_{2})+v_{1}\cdot \nabla_{x}(\omega_{1}+B)-v_{2}\cdot \nabla_{x}(\omega_{2}+B)=0,      
\end{align*}
which is recast as 
\begin{align}
\partial_{t}(\omega_{1}-\omega_{2})+(v_{1}-v_{2})\cdot \nabla_{x}(\omega_{1}+B)+v_{2}\cdot \nabla_{x}(\omega_{1}-\omega_{2})=0.\label{eq for difference of omega}    
\end{align}
Multiplying \eqref{eq for difference of omega} by $\omega_{1}-\omega_{2}$ and integrating the resulting identity, we have 
\begin{align}
\frac{1}{2}\frac{\dd}{\dd t}\left\Vert (\omega_{1}-\omega_{2})(t,\cdot)\right\Vert_{2}^{2}=&-\int_{\mathbb{T}^{2}}(v_{1}-v_{2})\cdot \nabla_{x}(\omega_{1}+B)(\omega_{1}-\omega_{2})(t,x)\ \dd x \notag\\
&-\frac{1}{2}\int_{\mathbb{T}^{2}}v_{2}\cdot \nabla_{x}\left\vert \omega_{1}-\omega_{2}\right\vert^{2}(t,x)\ \dd x. \label{time derivative of L2 norm of difference}  \end{align}
The first integral on the right-hand side of \eqref{time derivative of L2 norm of difference} is 
bounded by 
\begin{align*}
\frac{1}{2}\big(\Vert \nabla_{x}\omega_{1}\Vert_{\infty}
+\Vert \nabla_{x}B\Vert_{\infty}\big)
\big(\Vert (v_{1}-v_{2})(t,\cdot)\Vert_{2}^{2}+\Vert (\omega_{1}-\omega_{2})(t,\cdot)\Vert_{2}^{2}\big).    
\end{align*}
The second integral in \eqref{time derivative of L2 norm of difference} vanishes by integration by parts and using that $\mathrm{div}_{x}v_{2}=0$. Therefore, we conclude 
\begin{align}
\frac{\dd}{\dd t}\left\Vert (\omega_{1}-\omega_{2})(t,\cdot)\right\Vert_{2}^{2}
\leq \big(\Vert \nabla_{x}\omega_{1}\Vert_{\infty}
  +\Vert \nabla_{x}B\Vert_{\infty}\big)
  \big(\Vert (v_{1}-v_{2})(t,\cdot)\Vert_{2}^{2}+\Vert (\omega_{1}-\omega_{2})(t,\cdot)\Vert_{2}^{2}\big). \label{time der of L2 norm of difference} 
\end{align}
Moreover,  the  equation for the difference  $v_{1}-v_{2}$ reads  
\begin{align}
\partial_{t}(v_{1}-v_{2})=\int_{\mathbb{T}^{2}}(v_{2}^{\perp}-v_{1}^{\perp})(t,x)B(t,x)\ \dd x-\nabla_{x}^{\perp}V\ast (\partial_{t}\omega_{1}-\partial_{t}\omega_{2}). \label{eq for v1-v2}    \end{align}
Multiplying \eqref{eq for v1-v2} by $v_{1}-v_{2}$, we obtain
\begin{align}
\frac{1}{2}\frac{\dd}{\dd t}\left\Vert (v_{1}-v_{2})(t,\cdot)\right\Vert_{2}^{2}&=-\int_{\mathbb{T}^{2}\times \mathbb{T}^{2}}(v_{1}^{\perp}-v_{2}^{\perp})(t,x)\cdot (v_{1}-v_{2})(t,y)B(t,x)\ \dd x\dd y \notag\\
&\quad-\int_{\mathbb{T}^{2}}\nabla_{x}^{\perp}V\ast \partial_{t}(\omega_{1}-\omega_{2})(t,x)\cdot (v_{1}-v_{2})(t,x)\ \dd x:= I+J. \label{time der of diff of v} 
\end{align} 
First, we have the bound: 
\begin{align}
I\leq \left\Vert B\right\Vert_{\infty}\left\Vert (v_{1}-v_{2})(t,\cdot)\right\Vert_{2}^{2}. \label{est I} 
\end{align}
As for $J$, we recast it as 
\begin{align}
J
&=-\int_{\mathbb T^2}\nabla_x^\perp V*\partial_t(\omega_1-\omega_2)\cdot(v_1-v_2)\,\dd x\notag\\
&=\int_{\mathbb T^2}V*\partial_t(\omega_1-\omega_2)(\omega_1-\omega_2)\,\dd x\notag\\
&=-\int_{\mathbb T^2}V*\big((v_1-v_2)\cdot\nabla_x(\omega_1+B)\big)(\omega_1-\omega_2)\,\dd x\notag\\
&\quad-\int_{\mathbb T^2}V*\operatorname{div}_x\big(v_2(\omega_1-\omega_2)\big)(\omega_1-\omega_2)\,\dd x
=:J_1+J_2.\label{second integral}
\end{align}
Substituting \eqref{eq for difference of omega} inside \eqref{second integral}, we have 
\begin{align*}
J=&\int_{\mathbb{T}^{2}}(v_{1}-v_{2})(t,x)\cdot \nabla_{x}(\omega_{1}+B)(t,x)V\ast (\omega_{1}-\omega_{2})(t,x)\ \dd x\\
&+\int_{\mathbb{T}^{2}}V\ast(\mathrm{div}_{x}(v_{2}(\omega_{1}-\omega_{2})))(t,x)(\omega_{1}-\omega_{2})(t,x)\ \dd x:= J_{1}+J_{2}.  
\end{align*}
To bound the term $J_{1}$, we observe 
\begin{align}
J_{1}\leq \frac{1}{2}\big(\Vert \nabla_{x}\omega_{1}\Vert_{\infty}
+\Vert \nabla_{x}B\Vert_{\infty}\big)\big(\Vert (v_{1}-v_{2})(t,\cdot)\Vert_{2}^{2}
+\Vert V\Vert_{1}^{2} \Vert (\omega_{1}-\omega_{2})(t,\cdot)\Vert_{2}^{2}\big). \label{J1ES}   
\end{align}
In addition, we have 
\begin{align}
J_{2}&\leq \left\Vert (\omega_{1}-\omega_{2})(t,\cdot)\right\Vert_{2}^{2}+\left\Vert \nabla_{x}V\ast (v_{2}(\omega_{1}-\omega_{2}))(t,\cdot)\right\Vert_{2}^{2}\notag\\
&\leq \big(1+\Vert v_{2}\Vert_{\infty}^{2} \Vert \nabla_{x}V\Vert_{1}^{2}\big)
\left\Vert (\omega_{1}-\omega_{2})(t,\cdot)\right\Vert_{2}^{2}.  \label{J2ES}      
\end{align}
In view of \eqref{J1ES}--\eqref{J2ES}, we infer that there is some constant 
$C>0$ such that 
\begin{align}
J\leq C\big(\Vert (v_{1}-v_{2})(t,\cdot)\Vert_{2}^{2}+\Vert (\omega_{1}-\omega_{2})(t,\cdot)\Vert_{2}^{2}\big).  \label{EstJ}    
\end{align}
Combining \eqref{est I} with \eqref{EstJ} and owing to \eqref{time der of diff of v}, 
we conclude 
\begin{align}
\frac{\dd}{\dd t}\left\Vert (v_{1}-v_{2})(t,\cdot)\right\Vert_{2}^{2}\lesssim \left\Vert (v_{1}-v_{2})(t,\cdot)\right\Vert_{2}^{2}+\left\Vert (\omega_{1}-\omega_{2})(t,\cdot)\right\Vert_{2}^{2}. \label{L2 norm diff of v}
\end{align}
Combining \eqref{time der of L2 norm of difference} with \eqref{L2 norm diff of v}, we conclude 
\begin{align*}
\frac{\dd}{\dd t}\big(\left\Vert (\omega_{1}-\omega_{2})(t,\cdot)\right\Vert_{2}^{2}+\left\Vert (v_{1}-v_{2})(t,\cdot)\right\Vert_{2}^{2}  \big)\leq C'\big(\left\Vert (\omega_{1}-\omega_{2})(t,\cdot)\right\Vert_{2}^{2}+\left\Vert (v_{1}-v_{2})(t,\cdot)\right\Vert_{2}^{2}\big).     
\end{align*}
Hence, by Gr\"onwall's lemma, we obtain 
\begin{align*}
\left\Vert (\omega_{1}-\omega_{2})(t,\cdot)\right\Vert_{2}^{2}+\left\Vert (v_{1}-v_{2})(t,\cdot) \right\Vert_{2}^{2}
\leq e^{C't}\big(\Vert \omega_{1}^{\mathrm{in}}-\omega_{2}^{\mathrm{in}}\Vert_{2}^{2}
+\Vert v_{1}^{\mathrm{in}}-v_{2}^{\mathrm{in}}\Vert_{2}^{2} \big),
\end{align*}
from which the uniqueness follows immediately.

The well-posedness of \eqref{magnetized Euler} is now a direct consequence of Lemma \ref{vorticity formulation}: If $(\omega,v)$ is a solution to \eqref{vorticity formulation intro}, 
then $(v,p)$ is a solution to \eqref{magnetized Euler}, which gives the existence. 
For the uniqueness, suppose that  $(v_{1},p_{1}),(v_{2},p_{2})$  are two solutions 
to \eqref{magnetized Euler} with associated vorticity fields $(\omega_{1},\omega_{2})$ respectively 
and with the same initial data $v^{\mathrm{in}}$. 
Then $(v_{1},\omega_{1}),(v_{2},\omega_{2})$ are two solutions to \eqref{vorticity formulation intro}. Therefore, by uniqueness, it follows that $v_{1}\equiv v_{2}$, which in turn implies that 
\begin{align*}
\nabla_{x}(p_{1}-p_{2})=0.   
\end{align*}
Since 
\begin{align*}
\int_{\mathbb{T}^{2}}p_{1}(t,x)\ \dd x=\int_{\mathbb{T}^{2}}p_{2}(t,x)\ \dd x=0,    
\end{align*}
it follows that $p_{1}\equiv p_{2}$, so the uniqueness for \eqref{magnetized Euler} holds.

\medskip
\textbf{3. The estimate \eqref{useful est on vel}. } Fix $1<r<\infty$. By  the Calderon-Zygmund theorem
for any $1\leq j,k\leq 2$, we have
\begin{align}
\big\Vert (\partial_{x_{j}x_{k}}v)(t,\cdot)\big\Vert_{r} 
=\big\Vert (\partial_{x_{j}x_{k}}\nabla_{x}^{\perp}V\ast \omega)(t,\cdot)\big\Vert_{r}
\leq C\big\Vert \nabla_{x}\omega(t,\cdot)\big\Vert_{r}
\lesssim \big\Vert \omega\big\Vert_{W^{1,\infty}([0,T]\times \mathbb{T}^{2})}
\lesssim 1,  \label{Hessian of v Lr} 
\end{align}
because $\omega\in W^{1,\infty}([0,T]\times \mathbb{T}^{2})$. 
This proves that $D^{2}_{x}v\in L^{\infty}([0,T];L^{r}(\mathbb{T}^{2}))$.   
In addition, for $k=1,2$, we have 
\begin{align*}
\partial_{t}\partial_{x_{k}}v=-\partial_{x_{k}}\big((v\cdot\nabla_x)v+Bv^{\perp}+\nabla_{x}p\big).    
\end{align*}
Therefore, we obtain the estimate:  
\begin{align*}
\left\Vert (\partial_{t}\partial_{x_{k}}v)(t,\cdot)\right\Vert_{r}\lesssim
\left\Vert v\right\Vert_\infty\left\Vert D_x^2v\right\Vert_r
+\left\Vert D_xv\right\Vert_\infty\left\Vert D_xv\right\Vert_r
+\left\Vert B\right\Vert_{W^{1,\infty}}\left\Vert v\right\Vert_{W^{1,r}}
+\left\Vert \nabla_x^2p\right\Vert_r.   
\end{align*}
By the periodic Calder\'on--Zygmund estimate and $\int_{\mathbb T^2}p\,\dd x=0$, we have 
\begin{align*}
\left\Vert \nabla^{2}_{x}p(t,\cdot)\right\Vert_{r}\lesssim \left\Vert \Delta_{x} p(t,\cdot)\right\Vert_{r}=\left\Vert \mathfrak{U}(t,\cdot)\right\Vert_{r}.        
\end{align*}
Together with  \eqref{Hessian of v Lr}, we conclude 
\begin{align*}
\left\Vert \partial_{t}\partial_{x_{k}}v(t,\cdot)\right\Vert_{r}
&\lesssim \left\Vert v\right\Vert_{\infty}
\left\Vert D_{x}^{2}v\right\Vert_{L^{\infty}_{t}L^{r}_{x}}
+\left\Vert D_xv\right\Vert_{\infty}
\left\Vert D_xv\right\Vert_{L^\infty_tL^r_x}\\
&\quad+\left\Vert B\right\Vert_{L^{\infty}_{t}W^{1,\infty}_{x}}
\left\Vert v\right\Vert_{L^{\infty}_{t}W^{1,r}_{x}}
+\left\Vert \mathfrak{U}\right\Vert_{L^{\infty}_{t}L^{r}_{x}}\lesssim 1.
\end{align*}
This proves that $\partial_{t}D_{x}v\in L^{\infty}([0,T];L^{r}(\mathbb{T}^{2}))$ for $1<r<\infty$. 
The case $r=1$ follows from the case $r=2$ and the embedding 
$L^2(\mathbb T^2)\hookrightarrow L^1(\mathbb T^2)$.
\end{proof}

The following simple estimates will be used in the context of the supercritical and quasi-neutral limits. 
\begin{lem}\label{basic estimates}
Let the assumptions of {\rm Theorem \ref{Well posedness of EB}} hold.  Assume 
that $B\in W^{1,\infty}(\mathbb{R}_{+};W^{2,\infty}(\mathbb{T}^{2}))$. 
Let $(v,p)$ be the unique solution to \eqref{magnetized Euler}. 
Then the following bounds hold{\rm:} 
\begin{itemize}
   \item[{\rm(i)}]
    $\left\Vert \nabla_{x}p\right\Vert_{L^{\infty}_{t}L^{2}_{x}}\lesssim
    \left\Vert D_{x}v\right\Vert_{\infty} \left\Vert D_{x}v\right\Vert_{L^{\infty}_{t}L^{2}_{x}}
    +\left\Vert \nabla_{x}B\right\Vert_{\infty}\left\Vert v\right\Vert_{L^\infty_tL^2_x}
    +\left\Vert B\right\Vert_{\infty}\left\Vert D_xv\right\Vert_{L^\infty_tL^2_x}$.
    \item[{\rm(ii)}] $\left\Vert \nabla_{x}^{2}p\right\Vert_{L^{\infty}_{t}L^2_{x}}\lesssim
    \left\Vert D_{x}v\right\Vert_{\infty}\left\Vert D_{x}v\right\Vert_{L^{\infty}_{t}L^{2}_{x}}
    +\left\Vert \nabla_{x}B\right\Vert_{\infty}\left\Vert v\right\Vert_{L^\infty_tL^2_x}
    +\left\Vert B\right\Vert_{\infty}\left\Vert D_xv\right\Vert_{L^\infty_tL^2_x}$.
   \item[{\rm(iii)}] 
$\left\Vert \nabla_{x}\mathrm{div}_{x}\Delta^{-1}_{x}(v\mathfrak{U})\right\Vert_{\infty}
   \lesssim\left\Vert v\right\Vert_{\infty}\big(\Vert D_{x}v\Vert_{\infty}\Vert D_{x}^{2}v\Vert_{L^{\infty}_{t}L^3_{x}}+\Vert \nabla_{x}^{2}B\Vert_{\infty}\Vert v\Vert_{\infty}\big)$
   
   $\qquad\qquad \qquad\qquad\quad+\left\Vert v\right\Vert_{\infty} \big(\Vert D_{x}v\Vert_{\infty}\Vert \nabla_{x}B\Vert_{\infty}+\Vert B\Vert_{\infty}\Vert D_{x}^{2}v\Vert_{L^\infty_{t}L^{3}_{x}}\big).$     
 \item[{\rm(iv)}] For some $s\in (0,1)$, there is a constant $C=C(\left\Vert B\right\Vert_{W^{1,\infty}_{t}C^{0,s}_{x}},\left\Vert v\right\Vert_{W^{1,\infty}_{t}C^{0,s}_{x}})>0$ such that 
$$
\left\Vert \partial_{t}\nabla_{x}p\right\Vert_{\infty}\lesssim \left\Vert \partial_{t}D_{x}v:D_{x}v\right\Vert_{L^{\infty}_{t}L^{3}_{x}}+C.
$$ 
\end{itemize}    
\end{lem}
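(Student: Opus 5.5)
All four bounds come from the elliptic equation for the pressure,
\[
-\Delta_x p=\mathfrak U=\sum_{i,j}\partial_{x_j}v^i\partial_{x_i}v^j+\mathrm{div}_x(Bv^\perp),\qquad \int_{\mathbb T^2}p\,\dd x=0,
\]
combined with periodic Calder\'on--Zygmund and Poincar\'e estimates and the regularity from Theorem \ref{Well posedness of EB}. By that theorem, $v\in W^{1,\infty}$, $D_x^2v\in L^\infty_tL^r_x$ and $\partial_tD_xv\in L^\infty_tL^r_x$ for every $r<\infty$.

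\textbf{Items (i) and (ii).} First bound $\mathfrak U$ in $L^2$. The quadratic term satisfies $\big\|\sum\partial_jv^i\partial_iv^j\big\|_2\le\|D_xv\|_\infty\|D_xv\|_2$. Since $\mathrm{div}_xv^\perp=-\omega$, the magnetic term expands as $\mathrm{div}_x(Bv^\perp)=\nabla_xB\cdot v^\perp-B\omega$, so it is bounded by $\|\nabla_xB\|_\infty\|v\|_2+\|B\|_\infty\|D_xv\|_2$. The $L^2$ Calder\'on--Zygmund estimate (exact on the torus via Fourier multipliers) gives $\|\nabla_x^2p\|_2\lesssim\|\Delta_x p\|_2$, which is (ii). For (i), $\nabla_x p$ has zero mean, so Poincar\'e gives $\|\nabla_x p\|_2\lesssim\|\nabla_x^2p\|_2$, and (i) follows from (ii).

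\textbf{Item (iii).} The operator $\nabla_x\mathrm{div}_x\Delta_x^{-1}$ has order zero and is not bounded on $L^\infty$. I would therefore use the embedding $W^{1,3}(\mathbb T^2)\hookrightarrow L^\infty$ (valid since $3>2$) and Calder\'on--Zygmund in $L^3$:
\[
\|\nabla_x\mathrm{div}_x\Delta_x^{-1}(v\mathfrak U)\|_\infty\lesssim\|v\mathfrak U\|_{L^3}+\|\nabla_x(v\mathfrak U)\|_{L^3}.
\]
It then suffices to bound $\|v\|_\infty\|\mathfrak U\|_{W^{1,3}}$ plus $\|D_xv\|_\infty\|\mathfrak U\|_{L^3}$. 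Differentiating $\mathfrak U$ gives terms of the form $D_xv\,D_x^2v$, $\nabla_x^2B\,v$, $\nabla_xB\,D_xv$ and $B\,D_x^2v$. Estimating each in $L^3$ by putting the $L^\infty$ norm on the lower-order factor reproduces exactly the right-hand side of (iii). The remaining term $D_xv\,\mathfrak U$ is of lower order and can be absorbed into the same bounds using $\|D_xv\|_\infty\lesssim 1$.

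\textbf{Item (iv).} Differentiate the pressure equation in time:
\[
-\Delta_x\partial_tp=2\,\partial_tD_xv:D_xv+\mathrm{div}_x(\partial_tB\,v^\perp+B\,\partial_tv^\perp).
\]
For the first term, $\nabla_x\Delta_x^{-1}$ maps $L^3$ into $W^{1,3}\hookrightarrow L^\infty$, which produces $\|\partial_tD_xv:D_xv\|_{L^\infty_tL^3_x}$. For the divergence term, $\nabla_x\Delta_x^{-1}\mathrm{div}_x$ is a zeroth-order Calder\'on--Zygmund operator, which is bounded on $C^{0,s}$, and $C^{0,s}\hookrightarrow L^\infty$. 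This term is therefore bounded by $\|\partial_tB\,v^\perp+B\,\partial_tv^\perp\|_{C^{0,s}}$, a constant depending on $\|B\|_{W^{1,\infty}_tC^{0,s}_x}$ and $\|v\|_{W^{1,\infty}_tC^{0,s}_x}$. Both norms are finite: $v$ and $\partial_tv$ are Lipschitz-controlled or lie in $W^{1,r}\hookrightarrow C^{0,s}$ for large $r$.

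\textbf{Main obstacle.} The delicate point is (iii)--(iv): the $L^\infty$ endpoint fails for Riesz-type operators. I would handle it by always routing through $W^{1,3}$ or Hölder spaces rather than estimating directly in $L^\infty$.
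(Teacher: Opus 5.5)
Items (i), (ii) and (iv) are correct. In (i) you use Poincar\'e for the mean-zero field $\nabla_x p$, whereas the paper uses Young's inequality with $\nabla_xV\in L^1$. In (iv) you bound the magnetic part by the boundedness of the zeroth-order operator $\nabla_x\Delta_x^{-1}\mathrm{div}_x$ on $C^{0,s}$. That is more direct than the paper's appeal to the logarithmic estimate of Proposition \ref{near boundedness}, and it gives the same type of constant.

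The gap is in (iii). You apply $W^{1,3}\hookrightarrow L^\infty$ and $L^3$ Calder\'on--Zygmund to $v\mathfrak U$ as a whole, which brings in $\|\nabla_x(v\mathfrak U)\|_3$ and hence the term $\|D_xv\|_\infty\|\mathfrak U\|_3$. This term lacks the factor $\|v\|_\infty$ that every term on the stated right-hand side carries. It is also not bounded by that right-hand side in general. Take $B\equiv1$ and $v=\ell(u_1+u_2)$, where $u_1$ is a unit-amplitude vortex concentrated at scale $\ell$ and $u_2$ is a unit-amplitude cellular flow of frequency $\ell^{-2/3}$. Then $\|D_xv\|_\infty\|\mathfrak U\|_3\gtrsim\ell^{1/3}$, because the $-B\omega$ part dominates, while the right-hand side of (iii) is $O(\ell^{2/3})$.

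Saying the term is ``absorbed using $\|D_xv\|_\infty\lesssim1$'' only moves solution-dependent quantities into the constant. That shows the left-hand side is finite, which is all the later sections use, but it does not prove inequality (iii). The missing idea is to use $\mathrm{div}_xv=0$ before estimating. Since $\mathrm{div}_x(v\mathfrak U)=v\cdot\nabla_x\mathfrak U$, which has zero mean,
\[
\nabla_x\mathrm{div}_x\Delta_x^{-1}(v\mathfrak U)=\nabla_x\Delta_x^{-1}(v\cdot\nabla_x\mathfrak U)=-\nabla_xV\ast(v\cdot\nabla_x\mathfrak U).
\]
This is a smoothing operator of order $-1$, not an order-zero one. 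Young's inequality with $\nabla_xV\in L^{3/2}(\mathbb T^2)$ then gives a bound $\lesssim\|v\|_\infty\|\nabla_x\mathfrak U\|_3$; your own $L^3\to W^{1,3}\hookrightarrow L^\infty$ argument applied to $\nabla_x\Delta_x^{-1}$ works equally well. Your term-by-term expansion of $\nabla_x\mathfrak U$ then yields exactly the stated bound, and no $D_xv\,\mathfrak U$ term ever appears. This is how the paper argues.
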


\begin{proof}
Recall that 
\begin{align*}
\mathfrak{U}=\mathrm{div}_{x}\mathrm{div}_{x}(v\otimes v)+\nabla_{x}B \cdot v^{\perp}+B\mathrm{div}_{x}(v^{\perp}),
\end{align*}
so that
\begin{align*}
\nabla_{x}\mathfrak{U}=\nabla_{x}\mathrm{div}_{x}\mathrm{div}_{x}(v\otimes v)+\nabla_{x}^{2}Bv^{\perp}+D_{x}v^{\perp}\nabla_{x}B +\nabla_{x}B\mathrm{div}_{x}(v^{\perp})+B\nabla_{x}\mathrm{div}_{x}(v^{\perp}).    
\end{align*}

Since $-\Delta_{x} p=\mathfrak{U}$, it follows that  $\nabla_{x}p=\nabla_{x}V\ast \mathfrak{U}$ and consequently 
\begin{align*}
\left\Vert \nabla_{x}p(t,\cdot)\right\Vert_{2}&=\left\Vert \nabla_{x}V\ast \mathfrak{U}(t,\cdot)\right\Vert_{2}\leq \left\Vert \nabla_{x}V\right\Vert_{1} \left\Vert \mathfrak{U}(t,\cdot) \right\Vert_{2}\\
&\lesssim \left\Vert D_{x}v\right\Vert_{\infty} \left\Vert D_{x}v\right\Vert_{L^{\infty}_{t}L^{2}_{x}}+\left\Vert \nabla_{x}B\right\Vert_{\infty}\left\Vert v\right\Vert_{L^{\infty}_{t}L^2_{x}}+\left\Vert B\right\Vert_{\infty}\left\Vert D_{x}v\right\Vert_{L^{\infty}_{t}L^{2}_{x}}.  
\end{align*}

By the Calderon-Zygmund theorem, we have 
\begin{align*}
 \left\Vert \nabla_{x}^{2}p(t,\cdot)\right\Vert_{2}&=\left\Vert \nabla _{x}
 ^{2}V\ast \mathfrak{U}(t,\cdot)\right\Vert_{2}\lesssim \left\Vert \mathfrak{U}(t,\cdot)\right\Vert_{2}\\
 &\lesssim  \left\Vert D_{x}v\right\Vert_{\infty}\left\Vert D_{x}v\right\Vert_{L^{\infty}_{t}L^2_{x}}+\left\Vert \nabla_{x}B\right\Vert_{\infty}\left\Vert v\right\Vert_{2}+\left\Vert B\right\Vert_{\infty}\left\Vert D_{x}v\right\Vert_{L^{\infty}_{t}L^{2}_{x}} .    \end{align*}
 
We also see that
\begin{align*}
&\left\Vert \nabla_{x}\mathrm{div}_{x}\Delta^{-1}_{x}(v\mathfrak{U})(t,\cdot)\right\Vert_{\infty}=\left\Vert \nabla_{x}\Delta^{-1}_{x}(v\cdot \nabla_{x}\mathfrak{U})(t,\cdot)\right\Vert_{\infty}\\
&= \left\Vert \nabla_{x}V\ast (v\cdot \nabla_{x}\mathfrak{U})(t,\cdot)\right\Vert_{\infty}\leq \left\Vert \nabla_{x}V\right\Vert_{\frac{3}{2}}\left\Vert (v\cdot \nabla_{x}\mathfrak{U})(t,\cdot)\right\Vert_{3}\\
&\lesssim \left\Vert v\right\Vert_{\infty}\big(\left\Vert D_{x}v\right\Vert_{\infty}\left\Vert D_{x}^{2}v\right\Vert_{L^{\infty}_{t}L^3_{x}}+\left\Vert \nabla_{x}^{2}B\right\Vert_{\infty}\left\Vert v\right\Vert_{\infty}\big)\\
&\quad+\left\Vert v\right\Vert_{\infty}\big(\left\Vert D_{x}v\right\Vert_{\infty}\left\Vert \nabla_{x}B\right\Vert_{\infty}+\left\Vert B\right\Vert_{\infty}\left\Vert D_{x}^{2}
v\right\Vert_{L^\infty_{t}L^{3}_{x}}\big),          
\end{align*}
and
$\partial_{t}\nabla_{x}p=\nabla_{x}V\ast \partial_{t}\mathfrak{U}=\nabla_{x}V\ast \partial_{t}\mathrm{div}_{x}\mathrm{div}_{x}(v\otimes v)+\nabla_{x}V\ast \mathrm{div}_{x}(\partial_{t}Bv^{\perp}+B\partial_{t}v^{\perp})$.

In addition, the following identity holds: 
\begin{align*}
\partial_{t}\mathrm{div}_{x}\mathrm{div}_{x}(v\otimes v)=\sum_{i,j}\partial_{t}\partial_{x_{i}}v^{j}\partial_{x_{j}}v^{i}+\sum_{i,j}\partial_{x_{i}}v^{j}\partial_{t}\partial_{x_{j}}v^{i}=2\partial_{t}D_{x}v:D_{x}v.   
\end{align*}
Consequently, we obtain  
\begin{align}
\left\Vert \nabla_{x}V\ast \partial_{t}\mathrm{div}_{x}\mathrm{div}_{x}(v\otimes v)(t,\cdot)\right\Vert_{\infty}&=2\left\Vert \nabla_{x}V\ast (\partial_{t}D_{x}v:D_{x}v)(t,\cdot)\right\Vert_{\infty} \notag\\
&\leq 2\left\Vert \nabla_{x}V\right\Vert_{\frac{3}{2}}\left\Vert \partial_{t}D_{x}v:D_{x}v\right\Vert_{L^{\infty}_{t}L^{3}_{x}}. \label{first est iv}       
\end{align}
Moreover, we have 
\begin{align*}
\big\Vert \nabla_{x}V\ast\mathrm{div}_{x}(\partial_{t}Bv^{\perp}+B\partial_{t}v^{\perp})(t,\cdot)\big\Vert_{\infty}
&\leq \big\Vert \nabla_{x}^{2}V\ast (\partial_{t}Bv^{\perp}+B\partial_{t}v^{\perp})(t,\cdot)\big\Vert_{\infty}\\
&\leq 
\left\Vert \nabla_{x}^{2}V\ast (\partial_{t}B v)(t,\cdot)\right\Vert_{\infty}+\left\Vert \nabla_{x}^{2}V\ast (B\partial_{t}v)(t,\cdot)\right\Vert_{\infty}.   
\end{align*}
Choose $r>2$ in \eqref{useful est on vel}.  Morrey's embedding gives
$\partial_t v\in W^{1,r}(\mathbb T^2)\hookrightarrow C^{0,s}(\mathbb T^2)$
with $s=1-2/r$; the regularity of $B$ gives the same H\"{o}lder bound for the other terms.
By Proposition \ref{near boundedness}, we see that
\begin{align*}
\left\Vert \nabla_{x}^{2}V\ast (\partial_{t}B v)(t,\cdot)\right\Vert_{\infty}\leq C_{1}\,\qquad
\left\Vert \nabla_{x}^{2}V\ast (B\partial_{t}v)(t,\cdot)\right\Vert_{\infty}\leq C_{2}.       
\end{align*}
where $C_{1}=C_{1}(\left\Vert \partial_{t}B\right\Vert_{L^{\infty}_{t}C^{0,s}_{x}},\left\Vert v\right\Vert_{L^{\infty}_{t}C^{0,s}_{x}} )$ and $C_{2}=C_{2}(\left\Vert B\right\Vert_{L^{\infty}_{t}C^{0,s}},\left\Vert \partial_{t}v\right\Vert_{L^{\infty}_{t}C^{0,s}_{x}})$. Together with \eqref{first est iv}, this concludes the final estimate.  
\end{proof}

\section{The Quasi-Neutral Limit}\label{section quasi neutral}
We start by reviewing the well-posedness theory of the underlying equations. 
The well-posedness of \eqref{magnetized Euler} has been addressed in Section 3.
The well-posedness of \eqref{magnetized vp}
follows by 
a modification of 
the work 
by Lions-Perthame \cite{lions1991propagation}. 
Namely, we have the following theorem (see Theorem 1.1 in \cite{porat2025propagation} for more details):
\begin{thm}
Let $f^{\mathrm{in}}\in W^{1,\infty}(\mathbb{T}^{2}\times \mathbb{R}^{2})$
and satisfy
\begin{align*}
\int_{\mathbb{T}^{2}\times \mathbb{R}^{2}}\left\vert \xi\right\vert^{k} f^{\mathrm{in}} (x,\xi) \ \dd x\dd \xi<\infty\qquad\mbox{for some}\ k\ \mbox{sufficiently large}.  
\end{align*}
Then there exists a unique solution $f\in W^{1,\infty}([0,T]\times \mathbb{T}^{2}\times \mathbb{R}^{2})$ to \eqref{magnetized vp} with initial data $f^{\mathrm{in}}$. 
\end{thm}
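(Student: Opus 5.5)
The plan is to follow the Lions--Perthame strategy, as adapted in \cite{porat2025propagation}: propagate velocity moments, then use them to control the density and the field. The magnetic term $-B\xi^{\perp}\cdot\nabla_\xi f$ is harmless at the level of the characteristics, because $\xi\cdot B\xi^{\perp}=0$ pointwise and $\mathrm{div}_\xi(B\xi^\perp)=0$. So the flow is measure preserving, the kinetic energy sees no work from the magnetic field, and $\|f(t)\|_{L^p}=\|f^{\mathrm{in}}\|_{L^p}$ for all $p$.

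\textbf{Step 1 (approximation).} We construct solutions of a regularized problem in which $V$ is replaced by $V_\eta=\chi_\eta\ast V$ and $f^{\mathrm{in}}$ is mollified. The characteristics $(X,\Xi)$ then solve Lipschitz ODEs, with force $-\nabla V_\eta\ast\rho-B\Xi^\perp$, which grows at most linearly in $\Xi$ since $B\in L^\infty$. Global solutions follow by a fixed point on $\rho$.

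\textbf{Step 2 (uniform moment bounds).} The total energy $\frac12\iint|\xi|^2f+\frac12\int|\nabla\Phi|^2$ is conserved exactly, because the Lorentz term does no work. For higher moments $M_k(t)=\iint|\xi|^kf$ we run the Lions--Perthame Duhamel argument. Along characteristics,
\[
\Xi(t)=\Xi(s)-\int_s^t\big(\nabla\Phi(\tau,X)+B\Xi^\perp\big)\,d\tau .
\]
The first step is to note that $|\Xi(t)|$ obeys $\frac{d}{dt}|\Xi|\le|\nabla\Phi(t,X)|$, the magnetic part being orthogonal to $\Xi$. We then use the interpolation $\|\rho\|_{L^{(k+2)/2}}\lesssim\|f\|_\infty^{k/(k+2)}M_k^{2/(k+2)}$, valid in dimension $2$, together with the averaging of $\nabla\Phi$ along free-streaming trajectories. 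Here the free streaming is perturbed by the rotation, but only by a bounded Lipschitz flow, since $B\in W^{1,\infty}_tW^{2,\infty}_x$. This gives $M_k(t)\le C(T)$ uniformly in $\eta$, and then $\nabla\Phi\in L^\infty_tL^\infty_x$ by the 2D Biot--Savart-type bound $\|\nabla V\ast\rho\|_\infty\lesssim\|\rho\|_1^{1/2}\|\rho\|_\infty^{1/2}$. The bound on $\|\rho\|_\infty$ itself comes from the moment of order $k>2$ and the decay of $f^{\mathrm{in}}$, transported by the flow.

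\textbf{Step 3 (Lipschitz bounds and passage to the limit).} Once $\rho\in L^\infty$, we have $\nabla\Phi$ log-Lipschitz; if $\nabla_x\rho\in L^\infty$ is also propagated, $\nabla\Phi$ is Lipschitz. We differentiate the characteristic system and close a Gronwall inequality on $\|D(X,\Xi)\|_\infty$ together with $\|\nabla\rho\|_\infty$. This uses $\|D^2\Phi\|_\infty\lesssim\|\rho\|_\infty\log(e+\|\nabla\rho\|_\infty)$, as in Proposition~\ref{near boundedness}, and gives a $\log\log$-type bound as in Theorem~\ref{Well posedness of EB}. The extra terms $\nabla B\,\Xi^\perp$ are controlled by the moment bounds from Step 2. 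This yields uniform $W^{1,\infty}$ bounds on $f_\eta$, and compactness then gives a solution.

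\textbf{Step 4 (uniqueness).} Uniqueness follows from Loeper's argument: a Wasserstein/characteristics stability estimate with $\rho\in L^\infty$. The $B$-term contributes only $\|B\|_{W^{1,\infty}}(|X_1-X_2|+|\Xi_1-\Xi_2|)$.

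The main obstacle is Step 2: making the Lions--Perthame moment propagation go through when the free transport is replaced by a space- and time-dependent rotation. I would first try to rely purely on the pointwise bound $\frac{d}{dt}|\Xi|\le|\nabla\Phi|$ and the time-integrated control of the field, avoiding any dispersion argument.
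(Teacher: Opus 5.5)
The paper gives no argument of its own for this statement. It defers to the Lions--Perthame-type theory of \cite{porat2025propagation}, so your overall architecture is the expected one. The genuine gap is in Steps~3 and~4, where you treat the Lorentz force as if it were globally Lipschitz in $x$. Its $x$-derivative is $\Xi^\perp\otimes\nabla_xB$, of size $\|\nabla_xB\|_\infty|\Xi|$, and the velocity support is unbounded. This is the kinetic counterpart of the obstruction the paper removes in its appendix by an a priori velocity bound. In Step~3, linearizing $\dot\Xi=-\nabla_x\Phi(X)-B(t,X)\Xi^\perp$ produces the term $-(\nabla_xB(t,X)\cdot\delta X)\,\Xi^\perp$. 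Gr\"onwall therefore only gives $|D(X,\Xi)|\lesssim e^{C(1+|\xi|)t}$ along the characteristic issued from $(x,\xi)$. Velocity moments are integral quantities, so they cannot turn this into a bound on $\|\nabla_{x,\xi}f(t)\|_\infty$, which is a supremum over all velocities. Nor can they bound $\|\nabla_x\rho\|_\infty=\|\int\nabla_xf\,\mathrm{d}\xi\|_\infty$. Your claim that the terms $\nabla B\,\Xi^\perp$ are controlled by the moment bounds is therefore false. The growth is not an artifact of the estimate either. Take $\nabla_x\Phi=0$ and $B=\cos(2\pi x_2)$: the orbit $X_2\equiv\frac14$, $\Xi\equiv(v,0)$ is exact, and its linearization $\ddot y=2\pi v\,y$ is hyperbolic with rate $\sqrt{2\pi v}$. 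A uniform $W^{1,\infty}$ bound thus needs extra input. One option is compact velocity support: then $\frac{\mathrm{d}}{\mathrm{d}t}|\Xi|\le|\nabla_x\Phi(X)|$ bounds $\sup|\Xi|$ and your Step~3 closes. The other is velocity-weighted control of $\nabla f^{\mathrm{in}}$. Note that even for $B=0$, the requirement $\partial_tf=-\xi\cdot\nabla_xf+\dots\in L^\infty$ already forces $|\xi|\,\nabla_xf\in L^\infty$.

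The same term breaks Step~4. Set $Q(t)=\int(|X_1-X_2|^2+|\Xi_1-\Xi_2|^2)f^{\mathrm{in}}$. The piece $B(X_1)(\Xi_1-\Xi_2)^\perp$ does drop out by orthogonality. However, $(B(X_1)-B(X_2))\Xi_2^\perp$ leaves the term $\|\nabla_xB\|_\infty\int|\Xi_2||X_1-X_2||\Xi_1-\Xi_2|f^{\mathrm{in}}$, which is not $\lesssim Q$. Truncating at $|\Xi_2|\le R$ and using $k$ moments gives $\dot Q\lesssim Q\log(1/Q)+RQ+R^{2-k}$. Optimizing in $R$ yields the non-Osgood modulus $Q^{(k-2)/(k-1)}$, so uniqueness does not follow. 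Compact support or exponential velocity moments would restore $Q\log(1/Q)$.

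Conversely, Step~2 is easier than you make it, and it needs no dispersion argument in 2D. Your description of the magnetic flow as a bounded Lipschitz perturbation of free streaming is wrong anyway, for the reason above. The direct route is $\frac{\mathrm{d}}{\mathrm{d}t}M_k=-k\iint|\xi|^{k-2}\xi\cdot\nabla_x\Phi\,f$, where the Lorentz term vanishes because $\mathrm{div}_\xi(|\xi|^kB\xi^\perp)=0$. Combine $\|\rho\|_{L^{(k+2)/2}}\lesssim M_k^{2/(k+2)}$ with the 2D interpolation $\|\nabla V\ast\rho\|_\infty\lesssim\|\rho\|_1+\|\rho\|_1^{1-\theta}\|\rho\|_p^\theta$, where $p=\frac{k+2}{2}$ and $\theta=\frac{p}{2(p-1)}$. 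This gives $\|\nabla_x\Phi\|_\infty\lesssim1+M_k^{1/k}$, hence $\frac{\mathrm{d}}{\mathrm{d}t}M_k\lesssim1+M_k$. Likewise, bounding $\|\rho\|_\infty$ needs no decay hypothesis beyond the statement. Indeed, $f^{\mathrm{in}}\in W^{1,\infty}$ with $M_k<\infty$ forces $f^{\mathrm{in}}(x,\xi)\lesssim(1+|\xi|)^{-k/5}$. This bound is transported up to a shift of $\int_0^t\|\nabla_x\Phi\|_\infty$ in speed, which gives $\rho\in L^\infty$ once $k>10$.
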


Next, we prove the conservation of total energy. 

\begin{lem} \label{time der of total energy}
Let $f_{\varepsilon}\in W^{1,\infty}([0,T]\times \mathbb{T}^{2}\times \mathbb{R}^{2})$ be a non-negative solution, with finite second velocity moment and sufficient decay as \(\lvert\xi\rvert\to\infty\), to  \eqref{magnetized vp with quasineutral scaling}, 
and let the total energy be defined by 
\begin{align*}
\mathcal{F}_{\varepsilon}(t):= \frac{1}{2}\int_{\mathbb{T}^{2}\times \mathbb{R}^{2}}\left\vert \xi\right\vert^{2}f_{\varepsilon}(t,x,\xi)\ \dd x\dd \xi
+\frac{\varepsilon}{2}\int_{\mathbb{T}^{2}}\left\vert \nabla_{x} 
\Phi_{\varepsilon}\right\vert^{2}(t,x)\ \dd x.    
\end{align*}
Then
\begin{align*}
\frac{\dd}{\dd t}\mathcal{\mathcal{F}}_{\varepsilon}(t)=0 
\qquad \mbox{for all}\ t\in [0,T],   
\end{align*}
that is,
\begin{align*}
\mathcal{F}_{\varepsilon}(t)=\mathcal{F}_{\varepsilon}(0)
\qquad \mbox{for all}\ t\in [0,T].     
\end{align*}
\end{lem}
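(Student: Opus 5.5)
We differentiate the two pieces of $\mathcal{F}_{\varepsilon}$ separately and show that the kinetic part produces exactly the negative of the time derivative of the electric part. The magnetic term drops out because the Lorentz force does no work.

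\textbf{Kinetic part.} Multiply the Vlasov equation in \eqref{magnetized vp with quasineutral scaling} by $\frac12|\xi|^{2}$ and integrate over $\mathbb{T}^{2}\times\mathbb{R}^{2}$. The sufficient decay in $\xi$ justifies differentiating under the integral sign and discarding boundary terms at $|\xi|=\infty$.
\begin{itemize}
\item The transport term $\int \frac12|\xi|^{2}\,\xi\cdot\nabla_{x}f_{\varepsilon}$ vanishes, since it is a total $x$-derivative on the torus.
\item For the electric term we integrate by parts in $\xi$:
\begin{align*}
-\int \tfrac12|\xi|^{2}\,\nabla_{x}\Phi_{\varepsilon}\cdot\nabla_{\xi}f_{\varepsilon}\,\dd x\dd\xi=\int \nabla_{x}\Phi_{\varepsilon}\cdot\xi f_{\varepsilon}\,\dd x\dd\xi=\int_{\mathbb{T}^{2}}\nabla_{x}\Phi_{\varepsilon}\cdot J_{\varepsilon}\,\dd x.
\end{align*}
\item For the magnetic term, integration by parts gives
\begin{align*}
\int f_{\varepsilon}\,\mathrm{div}_{\xi}\big(\tfrac12|\xi|^{2}B\xi^{\perp}\big)\,\dd x\dd\xi .
\end{align*}
We have $\mathrm{div}_{\xi}(B\xi^{\perp})=0$ and $\nabla_{\xi}\frac12|\xi|^{2}\cdot B\xi^{\perp}=B\,\xi\cdot\xi^{\perp}=0$ pointwise in $x$. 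So this term vanishes identically, even though $B$ depends on $(t,x)$.
\end{itemize}
Keeping track of the signs from the equation $\partial_t f_\varepsilon=-\xi\cdot\nabla_x f_\varepsilon+(\nabla_x\Phi_\varepsilon+B\xi^\perp)\cdot\nabla_\xi f_\varepsilon$, these computations give
\begin{align*}
\frac{\dd}{\dd t}\frac12\int|\xi|^{2}f_{\varepsilon}\,\dd x\dd\xi=-\int_{\mathbb{T}^{2}}\nabla_{x}\Phi_{\varepsilon}\cdot J_{\varepsilon}\,\dd x.
\end{align*}

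\textbf{Electric part.} The quantity $\Phi_\varepsilon$ is well defined because $\int\rho_\varepsilon=1$, so $1-\rho_\varepsilon$ has zero mean. Integrating by parts on $\mathbb{T}^{2}$ and using the Poisson equation $\varepsilon\Delta_{x}\Phi_{\varepsilon}=1-\rho_{\varepsilon}$ together with the continuity equation \eqref{density}, we get
\begin{align*}
\frac{\dd}{\dd t}\frac{\varepsilon}{2}\int|\nabla_{x}\Phi_{\varepsilon}|^{2}
=-\varepsilon\int\Phi_{\varepsilon}\,\partial_{t}\Delta_{x}\Phi_{\varepsilon}
=\int\Phi_{\varepsilon}\,\partial_{t}\rho_{\varepsilon}
=-\int\Phi_{\varepsilon}\,\mathrm{div}_{x}J_{\varepsilon}
=\int\nabla_{x}\Phi_{\varepsilon}\cdot J_{\varepsilon}.
\end{align*}

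\textbf{Conclusion.} Adding the two identities gives $\frac{\dd}{\dd t}\mathcal{F}_{\varepsilon}=0$, and integrating in time gives $\mathcal{F}_\varepsilon(t)=\mathcal{F}_\varepsilon(0)$.

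\textbf{Main obstacle.} The computation is formal; the only delicate point is rigor. We need enough regularity of $\Phi_{\varepsilon}$ in time to differentiate $\int|\nabla_x\Phi_\varepsilon|^2$. Since $\partial_t\rho_\varepsilon=-\mathrm{div}_xJ_\varepsilon$ lies in $H^{-1}$ when $J_\varepsilon\in L^2$, elliptic regularity gives $\partial_t\nabla_x\Phi_\varepsilon\in L^2$. The finite second moment together with the decay assumption give the integrability needed for the $\xi$-integrations by parts. If needed, we would first test against $\frac12|\xi|^2\chi(\xi/R)$ with a cutoff $\chi$, then let $R\to\infty$ using the moment bounds.
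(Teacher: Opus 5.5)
Your proposal is correct and follows the paper's proof essentially line by line. The kinetic part gives $-\int_{\mathbb{T}^2} J_\varepsilon\cdot\nabla_x\Phi_\varepsilon\,\dd x$ after integrating by parts in $\xi$, with the magnetic contribution vanishing because $\mathrm{div}_\xi(|\xi|^2\xi^\perp)=0$. The field part gives $+\int_{\mathbb{T}^2} J_\varepsilon\cdot\nabla_x\Phi_\varepsilon\,\dd x$ through the same chain of identities using the Poisson and continuity equations. Your remarks on rigor (the velocity cutoff and the time regularity of $\nabla_x\Phi_\varepsilon$) go beyond what the paper writes, but they do not change the argument.
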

\begin{proof}
We compute the time derivative of $\mathcal{F}_{\varepsilon}(t)$: 
\begin{align*}
\frac{\dd}{\dd t}\mathcal{F}_{\varepsilon}(t)=&-\frac{1}{2}\int_{\mathbb{T}^{2}\times \mathbb{R}^{2}}\left\vert \xi\right\vert^{2}(\xi\cdot \nabla_{x}f_{\varepsilon}-(\nabla_{x}\Phi_{\varepsilon}+B\xi^{\perp})\cdot \nabla_{\xi}f_{\varepsilon})  \ \dd x\dd\xi
\\&+\varepsilon\int_{\mathbb{T}^{2}} \partial_{t}\nabla_{x} \Phi_{\varepsilon}\cdot \nabla_{x}\Phi_{\varepsilon}\ \dd x=:T_{1}+T_{2}. 
\end{align*}
Integrating by parts and observing that 
\begin{align*}
\mathrm{div}_{\xi}(\left\vert \xi\right\vert^{2}\xi^{\perp})=2\xi\cdot \xi^{\perp}+\left\vert \xi\right\vert^{2}\mathrm{div}_{\xi}(\xi^{\perp})=0,     
\end{align*}
we obtain 
\begin{align}
T_{1}=-\int_{\mathbb{T}^{2}\times \mathbb{R}^{2}} \xi\cdot (\nabla_{x}\Phi_{\varepsilon}+B\xi^{\perp})f_{\varepsilon}\ \dd x\dd \xi=-\int_{\mathbb{T}^{2}} J_{\varepsilon}\cdot \nabla_{x}\Phi_{\varepsilon}\ \dd x. \label{T1 calculation}  
\end{align}
Furthermore, by \eqref{density}, we have  
\begin{align*}
\partial_{t}\rho_{\varepsilon}+\mathrm{div}_{x}J_{\varepsilon} =0,    
\end{align*}
and hence 
\begin{align}
T_{2}=-\varepsilon\int_{\mathbb{T}^{2}} \partial_{t}\Delta_{x} \Phi_{\varepsilon}\Phi_{\varepsilon}\ \dd x=\int_{\mathbb{T}^{2}} \partial_{t}\rho_{\varepsilon}\Phi_{\varepsilon}\ \dd x=-\int_{\mathbb{T}^{2}} \mathrm{div}_{x}(J_{\varepsilon})\Phi_{\varepsilon}\ \dd x=\int_{\mathbb{T}^{2}} J_{\varepsilon}\cdot \nabla \Phi_{\varepsilon}\ \dd x. \label{T2 calculation}   
\end{align}
Therefore, gathering \eqref{T1 calculation}--\eqref{T2 calculation}, we obtain  
\begin{align}
\frac{\dd}{\dd t}\mathcal{F}_{\varepsilon}(t)=T_{1}+T_{2}=0. \label{T2T4 vanish}    
\end{align}
\end{proof}

In the remainder of this section fix $\varepsilon_{0}>0$ and set
\[
F_{*}:=\sup_{0<\varepsilon\leq\varepsilon_{0}}\mathcal F_{\varepsilon}(0)<\infty.
\]
This uniform bound follows from the hypotheses of Theorem \ref{main thm 1}, since
$\mathcal F_{\varepsilon}(0)\leq 2\mathcal E_{\varepsilon}(0)+\|v(0)\|_{\infty}^{2}$.
In particular, energy conservation gives
$\sqrt\varepsilon\,\|\nabla\Phi_{\varepsilon}(t)\|_{2}\leq\sqrt{2F_{*}}$ uniformly on $[0,T]$.

We denote by $\mathcal{K}_{\varepsilon}(t)$ and $\mathcal{V}_{\varepsilon}(t)$ the kinetic and interaction parts of the modulated energy, respectively, {\it i.e.,} 
\begin{align*}
\mathcal{K}_{\varepsilon}(t):=\frac{1}{2}\int_{\mathbb{T}^{2}\times \mathbb{R}^{2}} \left\vert \xi-v(t,x)\right\vert^{2}f_{\varepsilon}(t,x,\xi)\ \dd x\dd \xi \ \mbox{and}\  \mathcal{V}_{\varepsilon}(t):=\frac{\varepsilon}{2}\int_{\mathbb{T}^{2}} \left\vert \nabla_{x}\Phi_{\varepsilon}\right\vert^{2}(t,x) \ \dd x,   
\end{align*}
so that 
\begin{align}
\mathcal{E}_{\varepsilon}(t):=\mathcal{K}_{\varepsilon}(t)+\mathcal{V}_{\varepsilon}(t). \label{modulated energy}
\end{align}
Moreover, in the sequel, we designate by $D^{\mathbf{s}}_{x}v$ the symmetrized Jacobian of $v$ defined by 
\begin{align*}
D^{\mathbf{s}}_{x}v:= \frac{1}{2}\bigl(D_{x}v+(D_{x}v)^{\top}\bigr)
\end{align*}
The main part of the proof of Theorem \ref{main thm 1} is encapsulated in the following theorem. 
\begin{thm}
With the same assumptions and notations of {\rm Theorem {\rm\ref{main thm 1}}}, the following identity holds for all $t\in \mathbb{R}_{+}${\rm:}
\begin{align}
\frac{\dd}{\dd t}\mathcal{E}_{\varepsilon}(t)=&-\int_{\mathbb{T}^{2}\times \mathbb{R}^{2}} (\xi-v)^{\otimes 2}:D^{\mathbf{s}}_xvf_{\varepsilon}\ \dd x\dd \xi +\int_{\mathbb{T}^{2}\times \mathbb{R}^{2}} (\xi-v)\cdot \nabla_{x} p f_{\varepsilon}\ \dd x\dd \xi \notag
\\
&+\varepsilon\int_{\mathbb{T}^{2}} D^{\mathbf{s}}_{x}v:\nabla_{x} \Phi_{\varepsilon}^{\otimes 2}\ \dd x. 
\label{time derivative of modulated energy}    
\end{align}
Consequently, the following estimate holds{\rm:} 
\begin{align}
\mathcal{E}_{\varepsilon}(t)\leq \mathcal{E}_{\varepsilon}(0)+2\left\Vert D^{\mathbf{s}}v\right\Vert_{\infty} \int_{0}^{t}\mathcal{E}_{\varepsilon}(\tau)\ \dd \tau+O(\sqrt{\varepsilon}) 
\qquad \mbox{for all $t\in [0,T]$},\label{est for modulated energy}      
\end{align}
so that
\begin{align}
 \underset{t\in [0,T]}{\sup}\mathcal{E}_{\varepsilon}(t)\underset{\varepsilon\rightarrow 0}{\longrightarrow}0 
 \qquad \mbox{provided}\ \mathcal{E}_{\varepsilon}(0)\underset{\varepsilon\rightarrow 0}{\longrightarrow }0. \label{asymptotic vanishing mod energy}     
\end{align}
\label{main est thm 1}
\end{thm}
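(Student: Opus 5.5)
Expand $\mathcal{E}_{\varepsilon}=\mathcal{F}_{\varepsilon}-\int J_{\varepsilon}\cdot v\,\dd x+\frac12\int\rho_{\varepsilon}|v|^{2}\,\dd x$ and use Lemma~\ref{time der of total energy} to drop $\frac{\dd}{\dd t}\mathcal{F}_{\varepsilon}$. It then suffices to differentiate the two modulation terms with the moment equations of Proposition~\ref{theorem2.2}.

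\textbf{Computing the derivative.} I use \eqref{density}, \eqref{equation for Jepsilon}, and $\partial_t v=-(v\cdot\nabla_x)v-Bv^{\perp}-\nabla_x p$. After integrating by parts, the transport terms combine into $-\int (\xi-v)^{\otimes2}:D_xv\,f_{\varepsilon}$, and this equals the symmetrized form since $(\xi-v)^{\otimes 2}$ is symmetric. The pressure term becomes $\int(\xi-v)\cdot\nabla_x p\,f_{\varepsilon}$.

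The magnetic terms are the key cancellation. From $-\int v\cdot\partial_t J_{\varepsilon}$ we get $\int B\,J_{\varepsilon}^{\perp}\cdot v$. From $-\int J_{\varepsilon}\cdot\partial_t v$ we get $\int B\,J_{\varepsilon}\cdot v^{\perp}=-\int BJ_{\varepsilon}^{\perp}\cdot v$. From $\frac12\int\rho_{\varepsilon}\partial_t|v|^2$ we get $-\int\rho_{\varepsilon}B\,v\cdot v^{\perp}=0$. By pointwise skew-symmetry, all magnetic contributions vanish.

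The electric term is $\int v\cdot\rho_{\varepsilon}\nabla\Phi_{\varepsilon}$. Write $\rho_{\varepsilon}=1-\varepsilon\Delta\Phi_{\varepsilon}$. The part $\int v\cdot\nabla\Phi_{\varepsilon}$ vanishes because $\mathrm{div}\,v=0$. The remaining part uses the identity $\Delta\Phi\nabla\Phi=\mathrm{div}(\nabla\Phi\otimes\nabla\Phi)-\frac12\nabla|\nabla\Phi|^2$, which gives $-\varepsilon\int v\cdot\Delta\Phi_{\varepsilon}\nabla\Phi_{\varepsilon}=\varepsilon\int D_xv:\nabla\Phi_{\varepsilon}^{\otimes2}$, again using $\mathrm{div}\,v=0$. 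Together these yield \eqref{time derivative of modulated energy}.

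\textbf{The estimate \eqref{est for modulated energy}.} The first and third terms are bounded by $2\|D^{\mathbf s}v\|_\infty\mathcal{E}_{\varepsilon}$. The main obstacle is the pressure term $\int(J_{\varepsilon}-\rho_{\varepsilon}v)\cdot\nabla p$, which is not controlled by $\mathcal{E}_{\varepsilon}$ pointwise in time.

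I plan to integrate it in time. Write $\rho_{\varepsilon}=1-\varepsilon\Delta\Phi_{\varepsilon}$. Then $\int v\cdot\nabla p=0$, and by \eqref{density} together with $\mathrm{div}_x J_{\varepsilon}=\varepsilon\partial_t\Delta\Phi_{\varepsilon}$ we get $\int J_{\varepsilon}\cdot\nabla p=-\int \mathrm{div}J_{\varepsilon}\,p=-\varepsilon\int\partial_t\Delta\Phi_{\varepsilon}\,p$. Integrating by parts in time then gives boundary terms $\varepsilon\int\nabla\Phi_{\varepsilon}\cdot\nabla p$ and a bulk term $\varepsilon\int_0^t\int\nabla\Phi_{\varepsilon}\cdot\partial_t\nabla p$.

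Energy conservation gives $\sqrt\varepsilon\|\nabla\Phi_{\varepsilon}\|_2\le\sqrt{2F_*}$, so each of these is $O(\sqrt\varepsilon)$ once $\|\nabla p\|_{L^\infty_tL^2_x}$ and $\|\partial_t\nabla p\|_\infty$ are bounded. Those bounds come from Lemma~\ref{basic estimates}(i) and (iv), together with \eqref{useful est on vel}.

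The leftover term $\varepsilon\int v\cdot\nabla p\,\Delta\Phi_{\varepsilon}=-\varepsilon\int\nabla\Phi_{\varepsilon}\cdot\nabla(v\cdot\nabla p)$ is also $O(\sqrt\varepsilon)$, using the bound on $\nabla^2p$ from Lemma~\ref{basic estimates}(ii).

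\textbf{Conclusion.} Integrating the derivative identity in time and collecting these bounds gives \eqref{est for modulated energy}. Gr\"onwall's lemma then yields $\sup_{[0,T]}\mathcal{E}_{\varepsilon}\le(\mathcal{E}_{\varepsilon}(0)+C\sqrt\varepsilon)e^{2T\|D^{\mathbf s}v\|_\infty}\to0$, which is \eqref{asymptotic vanishing mod energy}.
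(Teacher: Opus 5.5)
Your proposal is correct and essentially follows the paper's proof. The one cosmetic difference is in the derivative step: you differentiate $-\int J_\varepsilon\cdot v+\frac12\int\rho_\varepsilon|v|^2$ through the moment equations of Proposition~\ref{theorem2.2}, whereas the paper tests the Vlasov equation directly against $\frac12|v|^2-\xi\cdot v$ and integrates by parts in $(x,\xi)$. This is the same computation, with the same skew-symmetric cancellation of the Lorentz terms. Your estimate step matches the paper's: you split $\rho_\varepsilon=1-\varepsilon\Delta_x\Phi_\varepsilon$, integrate the $J_\varepsilon\cdot\nabla_x p$ term by parts in time, and bound every remainder by $\sqrt{2\varepsilon F_*}$ times norms of $p$ from Lemma~\ref{basic estimates}.
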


\begin{proof} We divide the proof into four steps.

\smallskip
\textbf{1}. To lighten the notation, we omit the domain of integration from the integrals that follow. By Lemma  \ref{time der of total energy}, 
we have 
$$
\frac{\dd}{\dd t}\mathcal{F}_{\varepsilon}(t)=0.
$$  
Therefore, expanding $\mathcal{E}_{\varepsilon}(t)$, we obtain
\begin{align*}
\frac{\dd}{\dd t}\mathcal{E}_{\varepsilon}(t)
&=\frac{\dd}{\dd t}\Big(-\int\xi\cdot v f_{\varepsilon}\ \dd x\dd \xi+\frac{1}{2}\int \left\vert v\right\vert^{2}f_{\varepsilon}\ \dd x\dd \xi \Big)\\
&= -\int\xi\cdot v\partial_{t}f_{\varepsilon}\ \dd x\dd \xi-\int \partial_{t}(\xi\cdot v)f_{\varepsilon}\ \dd x\dd \xi+\frac{1}{2}\int\partial_{t}(\left\vert v\right\vert^{2} )f_{\varepsilon}\ \dd x\dd \xi+\frac{1}{2}\int\left\vert v\right\vert^{2}\partial_{t}f_{\varepsilon}\ \dd x\dd \xi\\
&=\int \big(\frac{1}{2}\left\vert v\right\vert^{2}-\xi\cdot v\big)\partial_{t}f_{\varepsilon}\ \dd x\dd \xi
+\int\partial_{t}\big(\frac{1}{2}\left\vert v\right\vert^{2} -\xi\cdot v\big)f_{\varepsilon}\ \dd x\dd \xi\\
&=-\int \big(\frac{1}{2}\left\vert v\right\vert^{2}-\xi\cdot v \big)
\big(\xi\cdot \nabla_{x}f_{\varepsilon}-(\nabla_{x}\Phi_{\varepsilon}
+B\xi^{\perp})\cdot \nabla_{\xi}f_{\varepsilon}\big)\ \dd x\dd \xi\\
&\quad +\int\partial_{t}\big(\frac{1}{2}\left\vert v\right\vert^{2} -\xi\cdot v\big)f_{\varepsilon}\ \dd x\dd \xi\\
&=\int \big(\partial_{t}+\xi\cdot \nabla_{x}-B\xi^{\perp}\cdot \nabla_{\xi} \big)\big(\frac{1}{2}\left\vert v\right\vert^{2}-\xi\cdot v\big)f_{\varepsilon}\ \dd x\dd \xi\\
&\quad-\int \nabla_{x}\Phi_{\varepsilon}\cdot\nabla_{\xi}
\big(\frac{1}{2}\left\vert v\right\vert^{2}-\xi\cdot v \big)f_{\varepsilon}\ \dd x\dd \xi:= \int J_{1}f_{\varepsilon}\ \dd x\dd \xi+\int J_{2}f_{\varepsilon}\ \dd x\dd \xi. 
\end{align*}
Note that, in the last equation, we have invoked integration by parts 
together with the observation that $\mathrm{div}_{\xi}(\xi^{\perp})=0$.

\smallskip
\textbf{2}. \textit{Calculation of $J_{1}$}. We start with the calculation of  $J_{1}$: 
\begin{align}
J_{1}=&\big(\partial_{t}+\xi\cdot \nabla_{x}-B\xi^{\perp}\cdot\nabla_{\xi}\big)\big(\frac{1}{2}\left\vert v\right\vert^{2}-\xi\cdot v \big)
=\sum_{j}\big(\partial_{t}+\xi\cdot \nabla_{x}-B\xi^{\perp}\cdot\nabla_{\xi}\big)\big(\frac{1}{2}v_{j}-\xi_{j}\big)v_{j}\notag\\
=&\sum_{j}v_{j}\big(\partial_{t}+\xi\cdot \nabla_{x}-B\xi^{\perp}\cdot \nabla_{\xi}\big)\big(\frac{1}{2}v_{j}-\xi_{j}\big) \notag
+\sum_{j}\big(\frac{1}{2}v_{j}-\xi_{j}\big)
\big(\partial_{t}+\xi\cdot\nabla_{x}-B\xi^{\perp}\cdot \nabla_{\xi}\big)(v_{j}) \notag\\
=&\sum_{j}v_{j}\big(\partial_{t}+\xi\cdot \nabla_{x}-B\xi^{\perp}\cdot \nabla_{\xi}\big)(v_{j})
-\sum_{j}v_{j}\big(\partial_{t}+\xi\cdot \nabla_{x}-B\xi^{\perp}\cdot \nabla_{\xi}\big)(\xi_{j}) \notag\\
&-\sum_{j}\xi_{j}\big(\partial_{t}+\xi\cdot \nabla_{x}-B\xi^{\perp}\cdot\nabla_{\xi}\big)(v_{j})\notag\\
=&\underset{:= J_{1}^{1}}{\underbrace{\sum_{j}(v_{j}-\xi_{j})\big(\partial_{t}+\xi\cdot \nabla_{x}-B\xi^{\perp}\cdot \nabla_{\xi}\big)(v_{j})}}\underset{:= J_{1}^{2}}{\underbrace{-\sum_{j}v_{j}\big(\partial_{t}+\xi\cdot \nabla_{x}-B\xi^{\perp}\cdot \nabla_{\xi}\big)(\xi_{j})}}. \label{J1 2 terms}
\end{align}
The first term in \eqref{J1 2 terms} can be written as  
\begin{align}
J_{1}^{1}&=\sum_{j}(v_{j}-\xi_{j})\big(\partial_{t}v_{j}+\sum_{k}\xi_{k}\partial_{x_{k}}v_{j}\big) \notag\\
&=\sum_{j}\left(v_{j}-\xi_{j}\right)\big(\partial_{t}v_{j}+\sum_{k}(\xi_{k}-v_{k})\partial_{x_{k}}v_{j}+\sum_{k}v_{k}\partial_{x_{k}}v_{j}\big) \notag\\
&=\sum_{j,k} \left(v_{j}-\xi_{j}\right)\left(\xi_{k}-v_{k}\right)\partial_{x_{k}}v_{j} +\sum_{j}\left(v_{j}-\xi_{j}\right)\big(\partial_{t}v_{j}+\sum_{k}v_{k}\partial_{x_{k}}v_{j}\big). 
\label{J11}
\end{align}
The second term in \eqref{J1 2 terms} can be written as 
\begin{align}
J_{1}^{2}
&=B\sum_{j}v_{j}\xi^{\perp}\cdot\nabla_{\xi} \xi_{j}
=B\sum_{j}v_{j}(\xi^{\perp})_{j}=Bv\cdot \xi^{\perp} \nonumber\\
&=-Bv^{\perp}\cdot \xi
{\underset{(v\cdot v^{\perp}=0)}
{=}}
-B v^{\perp}\cdot(\xi-v)
=(v-\xi)\cdot B v^{\perp}.\label{J12}   
\end{align}  
Thus,  we have
\begin{align*}
J_{1}=J_{1}^{1}+J^{2}_{1}=&
-\sum_{j,k}(\xi_{j}-v_{j})(\xi_{k}-v_{k})\partial_{x_{k}}v_{j}+\sum_{j}(v_{j}-\xi_{j})(\partial_{t}v_{j}+\sum_{k}v_{k}\partial_{x_{k}}v_{j}+B(v^{\perp})_{j})\\
&=-(\xi-v)^{\otimes 2}:D^{\mathbf{s}}_{x}v+(v-\xi)\cdot (\partial_{t}v+vD_{x}v+Bv^{\perp})\\
&\underset{\eqref{magnetized Euler}}{=}-(\xi-v)^{\otimes 2}:D^{\mathbf{s}}_{x}v+(\xi-v)\cdot \nabla_{x}p.
\end{align*}
To conclude, we obtain
\begin{align}
\int J_{1} f_{\varepsilon}\ \dd x\dd \xi= -\int (\xi-v)^{\otimes 2}:D^{\mathbf{s}}vf_{\varepsilon}\ \dd x\dd \xi+\int (\xi-v)\cdot \nabla_{x} p f_{\varepsilon}\ \dd x\dd \xi.   \label{J1 integral formula}
\end{align}

\smallskip
\textbf{3}.\textit{ Calculation of $J_{2}$}. Since $\varepsilon\Delta_{x} \Phi_{\varepsilon}=1-\rho_{\varepsilon}$, we have 
\begin{align*}
\int J_{2}f_{\varepsilon}\ \dd x\dd \xi=\int \nabla_{x}\Phi_{\varepsilon}\cdot v \rho_{\varepsilon}\ \dd x=\int\nabla_{x}\Phi_{\varepsilon}\cdot v\ \dd x-\varepsilon\int \nabla_{x}\Phi_{\varepsilon}\cdot v\Delta_{x} \Phi_{\varepsilon}\ \dd x.     
\end{align*}
Integrating by parts and using that $\mathrm{div}_{x}v=0$, we have
\begin{align}
\int \nabla_{x}\Phi_{\varepsilon}\cdot v\ \dd x=-\int \Phi_{\varepsilon}\mathrm{div}_{x}v\ \dd x=0.   \label{vanishing}  
\end{align}
In addition, integrating by parts yields  
\begin{align*}
-\varepsilon\int \nabla_{x}\Phi_{\varepsilon}\cdot v \Delta_{x} \Phi_{\varepsilon}\ \dd x&=\varepsilon\int \nabla_{x}(\nabla_{x} \Phi_{\varepsilon}\cdot v)\cdot \nabla_{x} \Phi_{\varepsilon}\ \dd x\\
&=\varepsilon\int \nabla^{2}_{x}\Phi_{\varepsilon}\nabla_{x} \Phi_{\varepsilon}\cdot v\ \dd x+\varepsilon\int D_{x}v\nabla_{x}\Phi_{\varepsilon}\cdot\nabla_{x}\Phi_{\varepsilon}\ \dd x.    
\end{align*}
Note that 
\begin{align*}
\varepsilon\int \nabla^{2}_{x}\Phi_{\varepsilon}\nabla_{x} \Phi_{\varepsilon}\cdot v\ \dd x=\frac{\varepsilon}{2}\int \nabla_{x}\left\vert \nabla_{x}\Phi_{\varepsilon}\right\vert^{2}\cdot v\ \dd x
=-\frac{\varepsilon}{2}\int\left\vert \nabla_{x}\Phi_{\varepsilon}\right\vert^{2}\mathrm{div}_{x}v =0.     
\end{align*}
Then
\begin{align}
-\varepsilon\int \nabla_{x}\Phi_{\varepsilon}\cdot v \Delta_{x} \Phi_{\varepsilon}\ \dd x=\varepsilon \int D_{x}v\nabla_{x}\Phi_{\varepsilon}\cdot \nabla_{x}\Phi_{\varepsilon}\ \dd x. \label{second term in J2}    
\end{align}
Thus, combining \eqref{vanishing}--\eqref{second term in J2},
we obtain
\begin{align}
\int J_{2}f_{\varepsilon}\ \dd x\dd \xi=\varepsilon\int D_{x}v\nabla_{x}\Phi_{\varepsilon}\cdot \nabla_{x} \Phi_{\varepsilon}\ \dd x=\varepsilon\int D^{\mathbf{s}}_{x}v:\nabla_{x} \Phi_{\varepsilon}^{\otimes 2}\ \dd x. \label{J2 formula} 
\end{align}
The combination of \eqref{J1 integral formula} and \eqref{J2 formula} yields \eqref{time derivative of modulated energy}.

\smallskip 
\textbf{4}. \textit{Conclusion}. 
Integrating \eqref{time derivative of modulated energy} in time, we have
\begin{align*}
\mathcal{E}_{\varepsilon}(t)&=\mathcal{E}_{\varepsilon}(0)-\int_{0}^{t}\int (\xi-v)^{\otimes 2}:D^{\mathbf{s}}_{x}vf_{\varepsilon}\ \dd x\dd \xi\dd \tau\\
&\quad+\int_{0}^{t}\int (\xi-v)\cdot\nabla_{x} pf_{\varepsilon}\ \dd x\dd \xi \dd \tau\\
&\quad+\varepsilon\int_{0}^{t}\int D^{\mathbf{s}}_{x}v:\nabla_{x} \Phi_{\varepsilon}^{\otimes 2}\ \dd x \dd \tau=\mathcal{E}_{\varepsilon}(0)+\sum_{k=1}^{3}I_{k}.   
\end{align*}
 We estimate each $I_{k}$ separately. The estimates on $I_{1}$ and $I_{3}$ are straightforward:
\begin{align}
 I_{1}\leq \left\Vert D^{\mathbf{s}}_{x}v\right\Vert_{\infty}
\int_{0}^{t}\int\left\vert \xi-v\right\vert^{2}f_{\varepsilon}\ \dd x\dd \xi \dd \tau \leq 2\left\Vert D^{\mathbf{s}}_{x}v\right\Vert_{\infty} \int_{0}^{t}\mathcal{K}_{\varepsilon}(\tau)\ \dd \tau, \label{I1}     
\end{align}
and 
\begin{align}
I_{3}\leq \left\Vert D^{\mathbf{s}}_{x}v\right\Vert_{\infty} \varepsilon\int_{0}^{t}\int \left\vert \nabla_{x}\Phi_{\varepsilon}\right\vert^{2}(\tau,x)\ \dd x \dd \tau=2\left\Vert D^{\mathbf{s}}_{x}v\right\Vert_{\infty}\int_{0}^{t}\mathcal{V}_{\varepsilon}(\tau) \ \dd \tau.  \label{I3}     
\end{align}
We proceed by estimating $I_{2}$. First, note that 
\begin{align}
&\int v\cdot \nabla_{x} pf_{\varepsilon}\ \dd x\dd \xi=\int v\cdot \nabla_{x} p\rho_{\varepsilon}\ \dd x \notag\\
&=\int v\cdot \nabla_{x} p(1-\varepsilon\Delta_{x} \Phi_{\varepsilon})\ \dd x=\int v\cdot \nabla_{x}p\ \dd x-\varepsilon\int v\cdot \nabla_{x}p\Delta_{x} \Phi_{\varepsilon}\ \dd x \notag\\
&=\varepsilon\int \nabla_{x} (v\cdot \nabla_{x} p)\cdot \nabla_{x} \Phi_{\varepsilon}\ \dd x, \label{first term in I4} 
\end{align}
where we have used that $\int v\cdot \nabla_{x}p\ \dd x=-\int p\mathrm{div}_{x} v\ \dd x=0$ in the last identity.   
Therefore, by conservation of 
energy (Lemma \ref{time der of total energy}) and Lemma \ref{basic estimates},  
we can estimate the right-hand side of \eqref{first term in I4} by  
\begin{align}
\Big|\varepsilon\int_{\mathbb T^{2}}\nabla_x(v\cdot\nabla_xp)\cdot\nabla_x\Phi_\varepsilon\,\dd x\Big|
&\leq \sqrt{2\varepsilon F_*}\big(\left\Vert D_{x}v\nabla_{x}p\right\Vert_{L^{\infty}_{t}L^{2}_{x}}+\left\Vert v\nabla_{x}^{2}p\right\Vert_{L^{\infty}_{t}L^{2}_{x}}\big)
=O(\sqrt\varepsilon). \label{nelligible term 1st}
\end{align}
In addition, using that $\partial_{t}\rho_{\varepsilon}+\mathrm{div}_{x}J_{\varepsilon}=0$, 
we see that  
\begin{align*}
\int \xi \cdot \nabla_{x} pf_{\varepsilon} \ \dd x\dd \xi&=-\int p\mathrm{div}_{x}J_{\varepsilon
}\ \dd x=\int p\partial_{t}\rho_{\varepsilon}\ \dd x\\&=-\varepsilon\int\partial_{t}\Delta_{x} \Phi_{\varepsilon}p\ \dd x=\varepsilon \int \partial_{t}\nabla_{x} \Phi_{\varepsilon}\cdot \nabla_{x} p\ \dd x.     
\end{align*}
Thus, integrating by parts in the time variable, we obtain  
\begin{align}
\int_{0}^{t}\int \xi\cdot \nabla_{x}p f_{\varepsilon}\ \dd x\dd \xi
&=\varepsilon\int_{0}^{t}\int \partial_{s}\nabla_{x}\Phi_{\varepsilon}(s,x)\cdot \nabla_{x} p(s,x)\ \dd x\dd s \notag\\
&=-\varepsilon\int\int_{0}^{t}\nabla_{x} \Phi_{\varepsilon}(s,x)\cdot \partial_{s}\nabla_{x} p(s,x)\ \dd s\dd x \notag\\
&\quad+\varepsilon\int \nabla_{x} \Phi_{\varepsilon}(t,x)\cdot\nabla_{x} p(t,x)\ \dd x-\varepsilon\int \nabla_{x} \Phi_{\varepsilon}(0,x)\cdot \nabla_{x} p(0,x)\ \dd x. \label{integration by parts in time}       
\end{align}
Applying the conservation of energy and Lemma \ref{basic estimates},
we deduce 
\begin{align}
\Big|\varepsilon\int_{0}^{t}\int_{\mathbb T^{2}}\nabla_{x}\Phi_{\varepsilon}\cdot \partial_{s}\nabla_{x}p\,\dd x\dd s\Big|
\leq T\sqrt{2\varepsilon F_*}\left\Vert \partial_{t}\nabla_{x}p\right\Vert_{L^{\infty}_{t}L^{2}_{x}}
=O(\sqrt\varepsilon). \label{square eps est}
\end{align}
By the same token, we have 
\begin{align}
\Big|\varepsilon \int \nabla_{x}\Phi_{\varepsilon}(t,x)\cdot\nabla_{x}p(t,x)\ \dd x\Big|
\leq \sqrt{2\varepsilon F_*}\left\Vert \nabla_{x}p\right\Vert_{L^{\infty}_{t}L^{2}_{x}}=O(\sqrt{\varepsilon}).  \label{square eps est 1}
\end{align}
and 
\begin{align}
\Big|\varepsilon \int \nabla_{x}\Phi_{\varepsilon}(0,x)\cdot \nabla_{x}p(0,x)\ \dd x\Big|
\leq \sqrt{2\varepsilon F_*}\left\Vert \nabla_{x}p(0,\cdot)\right\Vert_{2}=O(\sqrt{\varepsilon}).\label{square eps est 2}
\end{align}
Therefore, in view of \eqref{integration by parts in time}--\eqref{square eps est 2}, we see that 
\begin{align}
\Big|\int_{0}^{t}\int \xi\cdot \nabla_{x}pf_{\varepsilon}\ \dd x\dd \xi\Big|
\leq T\sqrt{2\varepsilon F_*}\left\Vert \partial_{t}\nabla_{x}p\right\Vert_{L^{\infty}_{t}L^{2}_{x}}
+2\sqrt{2\varepsilon F_*}\left\Vert \nabla_{x}p\right\Vert_{L^{\infty}_{t}L^{2}_{x}}
=O(\sqrt{\varepsilon}). \label{nelligible term 2nd}
\end{align}
Gathering \eqref{nelligible term 1st}--\eqref{nelligible term 2nd}, 
we deduce 
\begin{align*}
|I_{2}|&\leq T\sqrt{2\varepsilon F_*}\big(\left\Vert D_{x}v\nabla_{x}p\right\Vert_{L^{\infty}_{t}L^{2}_{x}}+\left\Vert v\nabla_{x}^{2}p\right\Vert_{L^{\infty}_{t}L^{2}_{x}}+\left\Vert \partial_{t}\nabla_{x}p\right\Vert_{L^{\infty}_{t}L^{2}_{x}}\big)
+2\sqrt{2\varepsilon F_*}\left\Vert \nabla_{x}p\right\Vert_{L^{\infty}_{t}L^{2}_{x}}\\
&\leq C_T\sqrt\varepsilon.
\end{align*}
Together with \eqref{I1}--\eqref{I3}, we conclude
\begin{align*}
\mathcal{E}_{\varepsilon}(t)\leq \mathcal{E}_{\varepsilon}(0)+2\left\Vert D^{\mathbf{s}}_{x}v\right\Vert_{\infty} \int_{0}^{t}\mathcal{E}_{\varepsilon}(\tau)\ \dd \tau+O(\sqrt{\varepsilon}),   
\end{align*}
 which leads to  \eqref{est for modulated energy}. 
 Finally, by Gr\"onwall's inequality, 
 we conclude \eqref{asymptotic vanishing mod energy}.  
\end{proof}

We are now well positioned to conclude the proof of Theorem \ref{main thm 1}.

\medskip
\textit{Proof of Theorem \ref{main thm 1}.} By Theorem \ref{main est thm 1}, 
we have  
\begin{align*}
\underset{t\in [0,T]}{\sup}\left\Vert \rho_{\varepsilon}(t,\cdot)-1\right\Vert_{\dot{H}^{-1}_{x}}^{2}=2\varepsilon\,\underset{t\in [0,T]}{\sup}\mathcal{V}_{\varepsilon}(t)\le 2\varepsilon\,\underset{t\in [0,T]}{\sup}\mathcal{E}_{\varepsilon}(t)\underset{\varepsilon \rightarrow 0}{\longrightarrow} 0.
\end{align*}
In addition, given $b\in W^{1,\infty}(\mathbb{T}^{2};\mathbb{R}^{2})$, we have 
\begin{align}
\int_{\mathbb{T}^{2}}b(x)\cdot (J_{\varepsilon}-v)(t,x)\ \dd x
&=\int_{\mathbb{T}^{2}}b(x)\cdot(J_{\varepsilon}-\rho_{\varepsilon}v)(t,x)\ \dd x \notag\\
&\quad+\int_{\mathbb{T}^{2}}b(x)\cdot(\rho_{\varepsilon}-1)v(t,x)\ \dd x
 := \mathcal{I}+\mathcal{J}.
\label{test b}
\end{align}
Thanks to the Cauchy-Schwarz inequality, we obtain
\begin{align}
\left|\mathcal{I}\right|&=\Big|\int_{\mathbb{T}^{2}\times\mathbb{R}^{2}} b(x)\cdot (\xi-v)f_{\varepsilon}(t,x,\xi)\ \dd x\dd\xi\Big|\notag\\
&\leq \Big
(\int_{\mathbb{T}^{2}\times \mathbb{R}^{2}}\left\vert \xi-v\right\vert^{2}f_{\varepsilon}(t,x,\xi)\ \dd x\dd \xi\Big)^{\frac{1}{2}}\Big(\int_{\mathbb{T}^{2}\times \mathbb{R}^{2}}\left\vert b\right\vert^{2} f_{\varepsilon}(t,x,\xi)\ \dd x\dd \xi\Big)^{\frac{1}{2}}\nonumber\\
&\leq \left\Vert b\right\Vert_{\infty} \sqrt{2\mathcal{E}_{\varepsilon}(t)}\underset{\varepsilon \rightarrow 0}{\longrightarrow} 0. \label{I est}    
\end{align}
Furthermore, by duality, we have 
\begin{align}
\mathcal{J}\leq \left\Vert bv(t,\cdot)\right\Vert_{\dot H^{1}_{x}}\left\Vert \rho_{\varepsilon}(t,\cdot)-1\right\Vert_{\dot{H}^{-1}_{x}}\leq 2\left\Vert b\right\Vert_{W^{1,\infty}_{x}}\left\Vert v\right\Vert_{L^{\infty}_{t}W^{1,\infty}_{x}}\left\Vert \rho_{\varepsilon}(t,\cdot)-1\right\Vert_{\dot{H}^{-1}_{x}}\underset{\varepsilon \rightarrow 0}{\longrightarrow} 0.     \label{J est}    
\end{align}
To conclude, combining \eqref{I est}--\eqref{J est}, we obtain 

\begin{align*}
\left\Vert J_{\varepsilon}(t)-v(t,\cdot)\,\dd x\right\Vert_{\mathrm{BL}^{*}}
:=\sup_{\|b\|_{W^{1,\infty}}\leq1}
\Big|\int_{\mathbb{T}^{2}}b(x)\cdot\bigl(J_\varepsilon(t,\dd x)-v(t,x)\,\dd x\bigr)\Big|
\underset{\varepsilon \rightarrow 0}{\longrightarrow}0
\end{align*}
uniformly for \(t\in[0,T]\).
Therefore, the current converges in the dual bounded--Lipschitz topology.
\qed

\section{The Supercritical Mean Field Limit}\label{sec mean field}
This section is devoted to the derivation of \eqref{magnetized Euler} from the large particle 
system \eqref{trajectories intro}. The total energy is given by 
\begin{align}
\mathcal{F}_{\varepsilon,N}(t):=\frac{1}{2N}\sum_{i=1}^{N}\left\vert \xi_{i}(t)\right\vert^{2}+\frac{1}{2N^{2}\varepsilon}\sum_{i\neq j}V_{\geq 0}(x_{i}(t)-x_{j}(t)),\label{total energy discrete}     
\end{align}
where we have defined $V_{\geq 0}=V+C_{0}$ for some fixed constant $C_{0}>0$ such that $V+C_{0}\geq 0$.  
Furthermore, recall that we have defined $\mathfrak{U}=-\Delta_{x}p$, or equivalently $\mathfrak{U}:= \mathrm{div}_{x}\mathrm{div}_{x}(v\otimes v)+\mathrm{div}_{x}(Bv^{\perp})$. 
We recall several fundamental functional inequalities, all of which are due to \cite{duerinckx2020mean} and are crucial for the analysis to follow. 
It will be convenient to apply the following shorthand notation: 
Given $\rho\in L^{\infty}(\mathbb{T}^{2})$ and  $X_{N}=(x_{1},\cdots\!,x_{N})\in \Delta_{N}^{c}$, 
let $\rho_{N}=\frac{1}{N}\sum_{i=1}^{N}\delta_{x_{i}}$ and define 
\begin{align*}
\mathcal{V}_{N}(\rho,\rho_{N}):=\iint_{\Delta^{c}}V(x-y)(\rho_{N}-\rho)^{\otimes 2}(\dd x\dd y).      
\end{align*}
Moreover, define 
\begin{align*}
\mathcal{K}_{\varepsilon,N}(t)
:= \frac{1}{2N}\sum_{i=1}^{N}\left\vert \xi_{i}(t)-v(t,x_{i})\right\vert^{2}.     
\end{align*}
Recalling \eqref{magnetized renormalized modulated energy},   
we may write $\mathcal{E}_{\varepsilon,N}(t)$ as 
\begin{align*}
\mathcal{E}_{\varepsilon,N}(t)=\mathcal{K}_{\varepsilon,N}(t)+\frac{1}{2\varepsilon}\mathcal{V}_{N}(1+\varepsilon\mathfrak{U},\rho_{\varepsilon,N}(t,\cdot))
+\frac{C}{2\varepsilon}\big(1+\left\Vert 1+\varepsilon \mathfrak{U}\right\Vert_{\infty}\big)\frac{1+\log N}{N}.     
\end{align*}  
The following {\it asymptotic positivity} ensures that $\mathcal{E}_{\varepsilon,N}(t)\geq 0$:

\begin{prop}[\cite{duerinckx2020mean},\,Corollary 3.5]
Suppose that
\begin{itemize}
\item[{\rm(i)}]$\rho \in L^{\infty}(\mathbb{T}^{2});$ 
   \item[{\rm(ii)}]
   $X_{N}=(x_{1},\cdots\!,x_{N})\in \Delta_{N}^{c}$ and $\rho_{N}=\frac{1}{N}\sum_{i=1}^{N}\delta_{x_{i}}$. 
\end{itemize}
Then there exist a constant $C>0$ such that
\begin{align*}
\mathcal{V}_{N}(\rho,\rho_{N}) \geq -C\big(1+\left\Vert \rho \right\Vert_{\infty}\big)
\frac{1+\log N}{N}. 
\end{align*}
\label{non negativity}
\end{prop}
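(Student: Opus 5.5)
The plan is the standard smearing (truncation) argument of Duerinckx--Serfaty, adapted to the periodic 2D Coulomb kernel. Fix a small radius $\eta\in(0,1/4)$, to be chosen as $\eta=N^{-1}$ at the end. For each point $x_i$ let $\delta_{x_i}^{(\eta)}$ denote the uniform probability measure on the circle $\partial B(x_i,\eta)$, and set $\rho_N^{(\eta)}=\frac1N\sum_i\delta_{x_i}^{(\eta)}$. Since $V$ differs from $-\frac{1}{2\pi}\log|x|$ by a smooth function near the origin, harmonicity (Newton's theorem for the log kernel) gives $V\ast\delta^{(\eta)}_{0}=V_\eta$, where $V_\eta$ agrees with $V$ outside $B(0,\eta)$ and is bounded below by $V$ up to an $O(\eta^2)$ correction coming from the smooth part and the $+1$ in $\Delta V=1-\delta_0$. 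The key nonnegativity is that the smeared energy
\[
\iint V(x-y)\,(\rho_N^{(\eta)}-\rho)^{\otimes 2}(\dd x\dd y)=\|\nabla V\ast(\rho_N^{(\eta)}-\rho)\|_{2}^{2}\ \ge 0,
\]
which holds because $\rho_N^{(\eta)}-\rho$ has zero mean (both are probability measures, assuming $\int\rho=1$; otherwise the mean is subtracted, and the constant mode contributes zero since $\int V=0$) and $-\Delta V=\delta_0-1$.

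Next I compare $\mathcal V_N(\rho,\rho_N)$ with this smeared quantity term by term. First, the off-diagonal interactions: $\frac1{N^2}\sum_{i\ne j}V(x_i-x_j)$ versus $\frac1{N^2}\sum_{i\ne j}\iint V\,\dd\delta^{(\eta)}_{x_i}\dd\delta^{(\eta)}_{x_j}=\frac1{N^2}\sum_{i\neq j}V_\eta\ast\delta^{(\eta)}_0(x_i-x_j)$. By superharmonicity of $V$ away from the constant $+1$ term, the averaged kernel is $\le V(x_i-x_j)+C\eta^2$, so the point interaction dominates up to $C\eta^2$. Second, the self-interaction terms produced by smearing: $\frac{1}{N^2}\sum_i\iint V\,\dd\delta^{(\eta)}_{x_i}\dd\delta^{(\eta)}_{x_i}=\frac{1}{N}V_\eta(\eta)\approx \frac{1}{2\pi N}\log\frac1\eta+\frac{C}{N}$. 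Third, the cross terms with $\rho$: $\big|\int (V\ast\rho)\,\dd(\rho_N-\rho_N^{(\eta)})\big|\le \frac1N\sum_i\int |V-V_\eta|(x_i-y)\rho(y)\dd y\le C\|\rho\|_\infty\int_{B(0,\eta)}|V-V_\eta|\le C\|\rho\|_\infty\eta^2(1+|\log\eta|)$, using that $V-V_\eta$ is supported in $B(0,\eta)$ and is $O(\log(\eta/|x|))$ there.

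Assembling, $\mathcal V_N(\rho,\rho_N)\ge \|\nabla V\ast(\rho_N^{(\eta)}-\rho)\|_2^2-\frac{1}{2\pi N}\log\frac1\eta-\frac CN-C(1+\|\rho\|_\infty)\eta^2(1+|\log\eta|)$, and choosing $\eta=N^{-1}$ gives $\mathcal V_N(\rho,\rho_N)\ge -C(1+\|\rho\|_\infty)\frac{1+\log N}{N}$.

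The main obstacle I expect is the off-diagonal comparison on the torus: for the pure log kernel the inequality $V\ast\delta^{(\eta)}_0\ast\delta^{(\eta)}_0\le V$ is exact, but the periodic $V$ carries the background term (since $\Delta V=1-\delta_0$), so it is only subharmonic up to a quadratic correction. I would handle this by writing $V=-\frac1{2\pi}\log|x|\chi+R$ with $R$ smooth, and absorbing the $O(\eta^2)$ errors, which are $\ll\frac{\log N}{N}$ for $\eta=N^{-1}$. Alternatively I would simply cite \cite[Corollary 3.5]{duerinckx2020mean}, since the statement is exactly that result specialized to $d=2$, $s=0$.
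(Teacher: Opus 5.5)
Your smearing argument is correct and is essentially the Duerinckx--Serfaty truncation proof behind \cite[Corollary 3.5]{duerinckx2020mean}, which the paper simply cites without reproducing; using $\int_{\mathbb{T}^2}V=0$ to get positivity of the smeared energy without assuming $\int\rho=1$ is the right adaptation to the torus. The slips are minor: for $\eta<\frac14$ one has exactly $V-V\ast\delta_0^{(\eta)}=\frac{1}{2\pi}\log_+\frac{\eta}{|x|}-\frac{\eta^2}{4}$ (with $|x|$ the distance to $\mathbb{Z}^2$), so $V\ast\delta_0^{(\eta)}$ is bounded \emph{above} by $V+\frac{\eta^2}{4}$ and $V-V\ast\delta_0^{(\eta)}$ is not supported in $B(0,\eta)$. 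Since $\|V-V\ast\delta_0^{(\eta)}\|_{L^1(\mathbb{T}^2)}\le\eta^2$, this only adds $O\big((1+\|\rho\|_\infty)\eta^2\big)$ errors, which you already absorb.
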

\begin{prop}[\cite{duerinckx2020mean},\, Proposition 3.6]
 Let the assumptions of {\rm Proposition \ref{non negativity}} hold. 
 Then there exits some $C>0$ such that, for any $\varphi\in W^{1,\infty}(\mathbb{T}^{2})$,
 \begin{align*}
&\Big|\int_{\mathbb{T}^{2}}\varphi(x)\left(\rho_{N}-\rho\right)(\dd x)\Big|\nonumber\\
&\leq C\frac{\left\Vert \nabla_{x} \varphi\right\Vert_{\infty}}{\sqrt{N}} +\left\Vert \nabla_{x} \varphi\right\Vert_{2} \Big(\mathcal{V}_{N}(\rho,\rho_{N})
+C\big(1+\left\Vert \rho \right\Vert_{\infty}\big)\frac{1+\log N}{N}\Big)^\frac{1}{2}.
 \end{align*}
 \label{coercivity inequality}
\end{prop}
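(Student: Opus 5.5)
The plan is to follow the smearing (truncation) argument of \cite{duerinckx2020mean}, which also underlies Proposition \ref{non negativity}. Fix a scale $\eta\in(0,1)$ to be chosen later, say $\eta=N^{-1/2}$. Let $\delta^{(\eta)}$ be the uniform probability measure on the circle $\partial B(0,\eta)$, viewed on $\mathbb{T}^{2}$, and define the smeared empirical measure $\rho_N^{(\eta)}:=\frac1N\sum_i\delta^{(\eta)}_{x_i}$. Then split
\begin{align*}
\int_{\mathbb{T}^{2}}\varphi\,(\rho_N-\rho)(\dd x)=\int_{\mathbb{T}^{2}}\varphi\,(\rho_N-\rho_N^{(\eta)})(\dd x)+\int_{\mathbb{T}^{2}}\varphi\,(\rho_N^{(\eta)}-\rho)(\dd x).
\end{align*}
Each $\delta^{(\eta)}_{x_i}$ is supported in $B(x_i,\eta)$, so the first term is bounded by $\|\nabla_x\varphi\|_\infty\,\eta$. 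With $\eta=N^{-1/2}$ this is exactly $C\|\nabla_x\varphi\|_\infty/\sqrt N$.

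For the second term, $\rho_N^{(\eta)}-\rho$ has zero mean because both are probability measures. Let $h_\eta:=V\ast(\rho_N^{(\eta)}-\rho)$, so that $-\Delta_x h_\eta=\rho_N^{(\eta)}-\rho$ up to the sign convention $\Delta V=1-\delta_0$. Integrating by parts and applying Cauchy--Schwarz gives
\begin{align*}
\Big|\int\varphi\,(\rho_N^{(\eta)}-\rho)\Big|=\Big|\int\nabla_x\varphi\cdot\nabla_x h_\eta\Big|\le\|\nabla_x\varphi\|_2\,\|\nabla_x h_\eta\|_2 .
\end{align*}
It therefore remains to prove the key estimate
\begin{align*}
\|\nabla_x h_\eta\|_2^2\le \mathcal V_N(\rho,\rho_N)+C(1+\|\rho\|_\infty)\frac{1+\log N}{N}.
\end{align*}

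To prove it, expand
\begin{align*}
\|\nabla_x h_\eta\|_2^2=\iint V\,(\rho_N^{(\eta)}-\rho)^{\otimes2},
\end{align*}
which is a full double integral including the diagonal, and compare it term by term with $\mathcal V_N(\rho,\rho_N)$, where the diagonal is excluded.
\begin{itemize}
\item[{\rm(a)}] \emph{Self-interaction terms.} The $N$ diagonal terms $\frac1{N^2}\iint V\,\dd\delta^{(\eta)}_{x_i}\,\dd\delta^{(\eta)}_{x_i}$ each equal $\frac{1}{2\pi}\log\frac1\eta+O(1)$ over $N^{2}$. In total they contribute $\frac{C(1+\log(1/\eta))}{N}$, which is $O\big(\frac{1+\log N}{N}\big)$.
\item[{\rm(b)}] \emph{Pair terms, $i\ne j$.} Since $V$ is superharmonic away from the origin, its average over circles does not exceed its central value: $V\ast\delta^{(\eta)}\le V$ pointwise, with equality outside $B(0,\eta)$ up to the smooth periodic correction. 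Hence replacing $\delta_{x_i}\otimes\delta_{x_j}$ by the smeared pair only decreases the interaction, up to an $O(\eta^2)$ error per pair coming from the regular part of the periodic Green function.
\item[{\rm(c)}] \emph{Cross terms with $\rho$.} Replacing $V\ast\rho$ by $V\ast\rho\ast\delta^{(\eta)}$ costs $\|V\ast\rho-V\ast\rho\ast\delta^{(\eta)}\|_\infty\lesssim\|\rho\|_\infty\,\eta^2\log\frac1\eta$, by integrating the singular part over $B(0,\eta)$.
\end{itemize}
Summing (a)--(c) gives the key estimate with error $C(1+\|\rho\|_\infty)\big(\frac{\log(1/\eta)}N+\eta^2\log\frac1\eta\big)$. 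For $\eta=N^{-1/2}$ this error is $\lesssim(1+\|\rho\|_\infty)\frac{1+\log N}N$. Taking square roots and combining with the first term completes the proof.

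The main obstacle is step (b) on the torus. There $V$ equals $-\frac1{2\pi}\log|x|$ plus a smooth function whose Laplacian is the constant $1$, rather than being exactly superharmonic, so the monotonicity $V\ast\delta^{(\eta)}\le V$ holds only up to an $O(\eta^2)$ correction. I would handle this by writing $V=-\frac{1}{2\pi}\log|x|\,\chi+R$ with $R$ smooth. I would then check that the resulting errors, summed over $N^2$ pairs and divided by $N^2$, are $O(\eta^2)$, which is harmless. The same error bookkeeping yields Proposition \ref{non negativity} as a byproduct, since the full double integral $\|\nabla_x h_\eta\|_2^2$ is nonnegative.
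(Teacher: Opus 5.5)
Your proposal is correct. It is essentially the truncation argument behind \cite[Proposition~3.6]{duerinckx2020mean}, which the paper quotes without proof: smear at scale $\eta=N^{-1/2}$, pay $\eta\|\nabla_x\varphi\|_\infty$, and bound $\|\nabla_x h_\eta\|_2^2$ by $\mathcal V_N(\rho,\rho_N)$ plus the self-energy $O\big(\frac{1+\log(1/\eta)}{N}\big)$ and errors of size $O\big((1+\|\rho\|_\infty)\eta^2\big)$. Two small remarks: on $\mathbb T^2$ (for $\eta<\frac12$) the correction in your step (b) is exactly the constant $\frac{\eta^{2}}{4}$, since $V\ast\delta^{(\eta)}=V-\frac{1}{2\pi}\big(\log\frac{\eta}{|x|}\big)_{+}+\frac{\eta^{2}}{4}$, so the inequality comes from the Dirac mass at the origin rather than from superharmonicity away from it, where in fact $\Delta_xV=1$; and your zero-mean step uses $\int_{\mathbb T^2}\rho\,\dd x=1$, a hypothesis left implicit in the statement but necessary for it, as testing with $\varphi\equiv1$ shows.
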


\begin{thm}[\cite{duerinckx2020mean},\,Proposition 1.1]\label{commutator estimate} 
Let the assumptions of {\rm Proposition \ref{non negativity}} hold. 
Then there is some $C>0$ such that, for any Lipschitz vector field  $v:\mathbb{T}^{2}\rightarrow \mathbb{R}^{2}$,
 \begin{align*}
&\Big\vert \iint_{\Delta^{c}}\left(v(x)-v(y)\right)\nabla_{x} V(x-y)\left(\rho_{N}-\rho\right)^{\otimes2}(\dd x\dd y)\Big\vert\\
&\leq C\left\Vert D_{x} v\right\Vert_{\infty}
\Big(\mathcal{V}_{N}(\rho,\rho_{N})+C\big(1+\left\Vert \rho\right\Vert_{\infty}\big)\frac{1
+\log N}{N}\Big). 
\end{align*}   
\label{Commutator est}
\end{thm}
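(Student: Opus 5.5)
The plan is to prove the estimate through the stress--energy tensor representation of the commutator, regularizing the point charges at a carefully chosen scale. I first treat the smooth case. If $\mu$ is a signed measure with smooth density and zero total mass, set $h=V\ast\mu$, so that $-\Delta_x h=\mu$ up to the neutralizing constant, which drops out because $\mu$ has zero mass. By antisymmetry of $\nabla_x V$ one has
\begin{align*}
\iint\big(v(x)-v(y)\big)\cdot\nabla_x V(x-y)\,\mu^{\otimes 2}(\dd x\dd y)=2\int_{\mathbb{T}^{2}} v\cdot\nabla_x h\,\dd\mu=-2\int_{\mathbb{T}^{2}} v\cdot\nabla_x h\,\Delta_x h\,\dd x .
\end{align*}
Integrating by parts, this equals $\int_{\mathbb{T}^{2}} D_x v:T(h)\,\dd x$, where $T(h)_{ij}=2\partial_i h\,\partial_j h-|\nabla_x h|^{2}\delta_{ij}$ is the stress--energy tensor. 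It follows that the commutator is bounded by $C\Vert D_x v\Vert_\infty\int|\nabla_x h|^{2}$, which is exactly the modulated energy in the smooth case.

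For $\mu=\rho_N-\rho$ I will smear each Dirac mass $\delta_{x_i}$ into the uniform measure $\delta^{(r_i)}_{x_i}$ on the circle $\partial B(x_i,r_i)$. The radii are chosen as
\[
r_i=\tfrac14\min\Big(\min_{j\neq i}|x_i-x_j|,\;N^{-1/2}\Big),
\]
and I set $h_{r}=V\ast\big(\frac1N\sum_i\delta^{(r_i)}_{x_i}-\rho\big)$. The key input is the Duerinckx--Serfaty energy identity, which underlies Proposition \ref{non negativity}:
\[
\int|\nabla_x h_r|^{2}+\frac{1}{N^{2}}\sum_{i}\log r_i^{-1}\;\lesssim\;\mathcal V_N(\rho,\rho_N)+C\big(1+\Vert\rho\Vert_\infty\big)\frac{1+\log N}{N},
\]
together with a control of the number of close pairs, $\frac1{N^2}\#\{(i,j):|x_i-x_j|\lesssim r_i\}$, by the same right-hand side. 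These bounds follow from superharmonicity of $V$ (Newton's theorem gives $V\ast\delta^{(r)}\le V$, with equality outside $B(0,r)$) and from the fact that the smeared charges do not overlap.

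Next I decompose the commutator as the smeared commutator plus error terms. The smeared commutator is $\int D_x v:T(h_r)$, which is bounded by $C\Vert D_xv\Vert_\infty\int|\nabla_xh_r|^2$ by the first paragraph. The error terms come from replacing $\delta_{x_i}$ by $\delta^{(r_i)}_{x_i}$ in the double integral and from the self-interaction terms $i=j$, which must be removed because the left side excludes the diagonal. The bounds I intend to use are the following.

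\begin{itemize}
\item For pairs $i\neq j$ at distance larger than $r_i+r_j$, Newton's theorem shows that the potentials agree exactly. The kernel $(v(x)-v(y))\cdot\nabla_xV(x-y)$ is not harmonic, however, so the replacement error is controlled by $\Vert D_xv\Vert_\infty\,r_i\,|\nabla^2 V|\,|x-y|$. Summing these contributions gives a cost of $\Vert D_xv\Vert_\infty$ times the close-pair count, plus $\Vert D_xv\Vert_\infty\frac{\log N}{N}$.
\item The self-interaction terms are harmless, because $|(v(x)-v(y))\cdot\nabla V(x-y)|\le C\Vert D_xv\Vert_\infty$ is bounded. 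Their total contribution is therefore $O(\Vert D_xv\Vert_\infty/N)$.
\item The cross terms with $\rho$ cost $\Vert D_x v\Vert_\infty\Vert\rho\Vert_\infty\frac1N\sum_i r_i^{2}\log r_i^{-1}$. Since $r_i\le N^{-1/2}$, this is $\lesssim\Vert D_x v\Vert_\infty\Vert\rho\Vert_\infty\frac{\log N}{N}$.
\end{itemize}

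Collecting all of this gives the stated bound. I expect the main obstacle to be the close-pair errors, since the commutator kernel is only bounded and not positive there. I will first try to control them by the monotonicity of $\eta\mapsto\int|\nabla_x h_\eta|^2+\frac1{N^2}\sum_i\log\eta$ at uniform scale $\eta$. Comparing the nearest-neighbour truncation with this uniform one is what should yield the counting bound on close pairs.
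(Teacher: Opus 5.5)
The paper does not prove this statement; it quotes it from \cite{duerinckx2020mean}. Several parts of your plan are sound: the nearest-neighbour smearing at $r_i=\frac14\min(\min_{j\neq i}|x_i-x_j|,N^{-1/2})$, the stress--energy identity, the control of $\int|\nabla_xh_r|^2+\frac1{N^2}\sum_i\log r_i^{-1}$ by comparison with a uniform truncation, and your bound on the self-interaction terms.

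\textbf{The gap is in your first and third bullets, the replacement error.} With your radii every pair satisfies $|x_i-x_j|\ge 2(r_i+r_j)$, so every pair falls under your first bullet. For a merely Lipschitz $v$ the per-pair error $\Vert D_xv\Vert_\infty r_i/|x_i-x_j|$ cannot be improved; take $v$ with a kink at $x_i$.

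Summed in absolute value, $\frac1{N^2}\sum_{i\neq j}r_i/|x_i-x_j|$ is of order $N^{-1/2}$ for a well-spread configuration, e.g.\ a lattice with $\rho\equiv1$, because $\frac1N\sum_{j\neq i}|x_i-x_j|^{-1}\approx\int_{\mathbb T^2}|z|^{-1}\,\dd z$. For that configuration $\mathcal V_N$, the close-pair count and $\frac{\log N}{N}$ are all $O(\frac{\log N}{N})$. So the bound you assert after summing is false.

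The third bullet has the same defect. The cross error of particle $i$ contains $\nabla_xV*\rho(x_i)\cdot\int(v(x)-v(x_i))\,\delta^{(r_i)}_{x_i}(\dd x)$. The circle average of $v-v(x_i)$ vanishes for affine $v$ but not for Lipschitz $v$, so this term is of size $\Vert D_xv\Vert_\infty\Vert\rho\Vert_\infty r_i$ and again sums to $O(N^{-1/2})$. Your $r_i^2\log r_i^{-1}$ presupposes a second-order cancellation that would cost $D^2_xv$, and the statement does not allow that.

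\textbf{What is missing is screening.} The pair errors must not be bounded one pair at a time. For each $i$, sum them over $j\neq i$ together with the cross error of particle $i$.

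Write $K(x,y)=k(x,y)+k(y,x)$ with $k(x,y)=v(x)\cdot\nabla_xV(x-y)$. Let $h^i:=V*(\rho_N-\rho)-\frac1NV(\cdot-x_i)$ be the field seen by particle $i$. Newton's theorem and the harmonicity of $\nabla_xV(\cdot-x_j)$ on $\overline{B(x_i,r_i)}$ make the $v(x_i)$-parts of the pair errors vanish exactly. The $v(x_i)$-parts of the cross errors cancel each other up to $O(\Vert D_xv\Vert_\infty\Vert\rho\Vert_\infty r_i^2)$; this is also what keeps $\Vert v\Vert_\infty$ out of the bound. The smeared and original commutators then differ by
\[
\frac2N\sum_{i=1}^N\int\big(v(x)-v(x_i)\big)\cdot\nabla_xh^i(x)\,\delta^{(r_i)}_{x_i}(\dd x)+O\Big(\Vert D_xv\Vert_\infty\frac{1+\Vert\rho\Vert_\infty}{N}\Big).
\]

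To bound this, use the following steps.
\begin{enumerate}
\item Since $\Delta_xh^i=\rho-\frac1N$ in $B(x_i,4r_i)$, interior estimates give $r_i|\nabla_xh^i|\lesssim\Vert\nabla_xh^i\Vert_{L^2(B(x_i,2r_i))}+(1+\Vert\rho\Vert_\infty)r_i^2$ on $\partial B(x_i,r_i)$.
\item The balls $B(x_i,2r_i)$ are pairwise disjoint. On them $\nabla_xh_r=\nabla_xh^i+\frac1N\nabla_x(V*\delta^{(r_i)}_{x_i})$, and the last term has $L^2$ norm $O(\frac1N)$.
\item Young's inequality $\frac1Na_i\le\frac1{2N^2}+\frac12a_i^2$ then bounds the displayed error by $C\Vert D_xv\Vert_\infty\big(\int|\nabla_xh_r|^2+\frac{1+\Vert\rho\Vert_\infty}{N}\big)$.
\end{enumerate}
Your energy control then closes the argument.

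So the close pairs you flagged are not the real obstacle. The real difficulty is this far-field error, which becomes small only after using the cancellation between $\frac1N\sum_{j\neq i}\nabla_xV(\cdot-x_j)$ and $\nabla_xV*\rho$.
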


By the same argument as in Lemma \ref{time der of total energy}, the total energy associated with the large particle system is conserved:
 \begin{lem} \label{Conservation of discrete energy}
Let $(x_{1}^{0},\cdots\!,x_{N}^{0})\in \Delta_{N}^{c}$, 
and let $(x_{1}(t),\cdots\!,x_{N}(t),\xi_{1}(t),\cdots\!,\xi_{N}(t))\in C^{1}(\mathbb{R};\mathbb{T}^{2N}\times \mathbb{R}^{2N})$ 
be a solution to \eqref{trajectories intro}. Let $\mathcal{F}_{\varepsilon,N}(t)$ be given by \eqref{total energy discrete}. Then
\begin{align*}
\frac{\dd}{\dd t}\mathcal{F}_{\varepsilon,N}(t)=0 \qquad \mbox{for all}\ t\in \mathbb{R}_{+}.     
\end{align*}
\end{lem}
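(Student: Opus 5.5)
The plan is to differentiate $\mathcal{F}_{\varepsilon,N}$ directly along the trajectories of \eqref{trajectories intro}, mirroring the proof of Lemma \ref{time der of total energy}. We work on a time interval where $(x_1(t),\dots,x_N(t))\in\Delta_N^c$. There $V_{\geq0}(x_i-x_j)$ is smooth for $i\neq j$, so $\mathcal{F}_{\varepsilon,N}$ is $C^1$ and the chain rule applies. The constant $C_0$ in $V_{\geq0}=V+C_0$ contributes nothing to the derivative, so we may replace $V_{\geq0}$ by $V$ throughout.

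\textbf{Kinetic part.} Using the equation for $\dot\xi_i$, we get
\[
\frac{\dd}{\dd t}\frac{1}{2N}\sum_i|\xi_i|^2=\frac1N\sum_i\xi_i\cdot\Big(-B(t,x_i)\xi_i^{\perp}-\frac{1}{N\varepsilon}\sum_{j\neq i}\nabla V(x_i-x_j)\Big).
\]
The magnetic contribution vanishes pointwise, because $\xi_i\cdot\xi_i^\perp=0$ for every $i$, whatever the values of $B(t,x_i)$. This is the discrete analogue of the identity $\mathrm{div}_\xi(|\xi|^2\xi^\perp)=0$ used in Lemma \ref{time der of total energy}. What remains is $-\frac{1}{N^2\varepsilon}\sum_{i\neq j}\xi_i\cdot\nabla V(x_i-x_j)$.

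\textbf{Potential part.} Since $\dot x_i=\xi_i$, we get
\[
\frac{\dd}{\dd t}\frac{1}{2N^2\varepsilon}\sum_{i\neq j}V(x_i-x_j)=\frac{1}{2N^2\varepsilon}\sum_{i\neq j}\nabla V(x_i-x_j)\cdot(\xi_i-\xi_j).
\]
$V$ is even, since the Green function of $\Delta$ on $\mathbb{T}^2$ with zero mean is symmetric, so $\nabla V$ is odd. Swapping $i$ and $j$ in the $\xi_j$ term therefore shows that it equals the $\xi_i$ term, and the whole expression becomes $\frac{1}{N^2\varepsilon}\sum_{i\neq j}\xi_i\cdot\nabla V(x_i-x_j)$. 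This exactly cancels the remaining kinetic term, so $\frac{\dd}{\dd t}\mathcal{F}_{\varepsilon,N}=0$.

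\textbf{Main obstacle.} The only delicate point is justifying that the trajectory stays in $\Delta_N^c$ for all $t\geq0$, so that the computation is legitimate globally. This is part of the hypothesis that a $C^1$ solution exists, and is addressed in Section \ref{wellposed-trajectories-sec}. It can also be seen a posteriori. On the maximal interval of existence in $\Delta_N^c$, the conservation just proved gives
\[
\frac{1}{2N^2\varepsilon}V_{\geq0}(x_i-x_j)\leq\mathcal{F}_{\varepsilon,N}(0)\quad\text{for every pair } i\neq j,
\]
because every term in the sum is nonnegative. Since $V(x)\sim-\frac{1}{2\pi}\log|x|\to+\infty$ as $x\to0$, particle distances stay bounded below. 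The velocities are bounded by the kinetic part of the energy. Hence no collision or blow-up occurs, and the identity holds on all of $\mathbb{R}_+$.
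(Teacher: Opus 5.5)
Your proof is correct and matches the paper's argument: the Lorentz term drops out because $\xi_i\cdot\xi_i^{\perp}=0$. Oddness of $\nabla_x V$ then turns the derivative of the interaction energy into $\frac{1}{N^2\varepsilon}\sum_{i\neq j}\xi_i\cdot\nabla_x V(x_i-x_j)$, which cancels the kinetic contribution exactly. Your closing non-collision discussion is not needed here, since the lemma presupposes a solution, but the idea is sound: bounding each nonnegative term $V_{\geq 0}(x_i-x_j)$ by the conserved energy is a cleaner route to the separation bound than the $V_{\mathbb{R}^2}+V_{\mathrm{loc}}$ splitting in Lemma \ref{lemma:6.2}.
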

\begin{proof}
We first have 
\begin{align}
\frac{\dd}{\dd t}\Big(\frac{1}{2N}\sum_{i=1}^{N}\left\vert \xi_{i}(t)\right\vert^{2} \Big)
&=\frac{1}{N}\sum_{i=1}^{N}\xi_{i}(t)\cdot \dot \xi_{i}(t) \notag\\
&=-\frac{1}{N}\sum_{i=1}^{N}\xi_{i}(t)\cdot\Big(B(t,x_{i}(t))\xi_{i}^{\perp}(t)+\frac{1}{N\varepsilon}\sum_{j:j\neq i}\nabla_{x} V(x_{i}(t)-x_{j}(t))\Big) \notag\\
&=-\frac{1}{N^{2}\varepsilon}\sum_{i=1}^{N}\sum_{j:j\neq i}\xi_{i}(t)\cdot \nabla_{x} V(x_{i}(t)-x_{j}(t)). \label{time derivative of discrete kinetic part}     
\end{align}
Moreover, since \(\nabla_x V\) is odd, a direct computation yields
\begin{align}
\frac{\dd}{\dd t}\Big(\frac{1}{2N^{2}\varepsilon}\sum_{i\neq j}V(x_{i}(t)-x_{j}(t))\Big)
&=\frac{1}{2N^{2}\varepsilon}\sum_{i\neq j}\nabla_{x} V(x_{i}(t)-x_{j}(t))\cdot(\dot x_{i}(t)-\dot x_{j}(t)) \notag\\
&=\frac{1}{2N^{2}\varepsilon}\sum_{i\neq j} \nabla_{x} V(x_{i}(t)-x_{j}(t))\cdot(\xi_{i}(t)-\xi_{j}(t)) \notag\\
&=\frac{1}{N^{2}\varepsilon}\sum_{i\neq j}\xi_{i}(t)\cdot\nabla_{x} V(x_{i}(t)-x_{j}(t)). \label{time der of discrete interation part}  
\end{align}
Combining \eqref{time derivative of discrete kinetic part}--\eqref{time der of discrete interation part},
we conclude that $\frac{\dd}{\dd t}\mathcal{F}_{\varepsilon,N}(t)=0$. 
\end{proof}

\begin{rem}
The conservation of energy as stated in {\rm Lemma \ref{Conservation of discrete energy}} holds whenever $V$ is even, $C^1$, and has a bounded gradient. The utility of this remark will become evident in {\rm Section \ref{wellposed-trajectories-sec}}.   \label{rem about conse of energy}  
\end{rem}

\begin{thm}
Let the assumptions of {\rm Theorem \ref{main thm renormalized}} hold, 
and let $\mathcal{E}_{\varepsilon,N}(t)$ be given by \eqref{magnetized renormalized modulated energy}. Then, for all $t\in \mathbb{R}_{+}$, 
\begin{align}
\frac{\dd}{\dd t}\mathcal{E}_{\varepsilon,N}(t)=& -\frac{1}{N}\sum_{i=1}^{N}\big(v(t,x_i(t))-\xi_i(t)\big)^{\otimes2}:D_xv(t,x_i(t))
\notag\\
&+\frac{1}{2\varepsilon}\iint_{\Delta^{c}}(v(t,x)-v(t,y))\cdot \nabla_{x} V(x-y)\left(\rho_{\varepsilon,N}-1-\varepsilon \mathfrak{U}\right)^{\otimes 2}(t,\dd x\dd y) \notag \notag\\
&-\int_{\mathbb{T}^{2}} \mathrm{div}_{x} (-\Delta_{x})^{-1}(v\mathfrak{U})(t,x)\left(\rho_{\varepsilon,N}-1-\varepsilon\mathfrak{U}\right)(t,\dd x) \notag\\
&-\int_{\mathbb{T}^{2}}\partial_{t}p(t,x)\left(\rho_{\varepsilon,N}-1-\varepsilon\mathfrak{U}\right)(t,\dd x).  
\end{align}
Consequently, it follows that 
\begin{align*}
\underset{t\in [0,T]}{\sup}\mathcal{E}_{\varepsilon,N}(t)\underset{N\rightarrow \infty}{\longrightarrow}0\ \qquad 
\mbox{provided}\ \mathcal{E}_{\varepsilon,N}(0)\underset{N\rightarrow \infty}{\longrightarrow}0\ \mbox{and}\ \frac{\log N}{\varepsilon N}\underset{N\rightarrow \infty}{\longrightarrow} 0.  \end{align*}
\label{time derivative of modulated energy statement}
\end{thm}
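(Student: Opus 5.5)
The plan is to compute $\frac{\dd}{\dd t}\mathcal{E}_{\varepsilon,N}$ directly along the trajectories, and to separate the kinetic part $\mathcal{K}_{\varepsilon,N}$ from the interaction part $\frac{1}{2\varepsilon}\mathcal{V}_N(1+\varepsilon\mathfrak U,\rho_{\varepsilon,N})$. The last term of \eqref{magnetized renormalized modulated energy} is constant in time.

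\emph{Kinetic part.} Differentiating gives
\[
\frac{1}{N}\sum_i(\xi_i-v(x_i))\cdot\bigl(\dot\xi_i-\partial_tv(x_i)-D_xv(x_i)^{\top}\xi_i\bigr).
\]
We insert $\dot\xi_i$ from \eqref{trajectories intro} and $\partial_t v=-(v\cdot\nabla)v-Bv^\perp-\nabla p$ from \eqref{magnetized Euler}. Writing $\xi_i=v+(\xi_i-v)$ in the convective term produces $-(\xi_i-v)^{\otimes2}:D_xv$.

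The magnetic contributions combine to $-B(x_i)(\xi_i-v(x_i))^\perp$. Its inner product with $\xi_i-v(x_i)$ vanishes by pointwise skew-symmetry, which is the exact cancellation emphasized in the introduction.

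What remains is
\[
\frac1N\sum_i(\xi_i-v(x_i))\cdot\nabla p(x_i)-\frac{1}{N^2\varepsilon}\sum_{i\ne j}(\xi_i-v(x_i))\cdot\nabla V(x_i-x_j).
\]

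\emph{Interaction part.} We expand $\mathcal V_N$ into the particle--particle, particle--field and field--field pieces. The particle--particle time derivative cancels the $\xi_i\cdot\nabla V$ term above, exactly as in Lemma \ref{Conservation of discrete energy}.

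We then symmetrize the $v(x_i)\cdot\nabla V$ term against the cross terms using $\Delta V=1-\delta_0$ and $\nabla(\varepsilon^{-1}V\ast(1+\varepsilon\mathfrak U))=\nabla V\ast\mathfrak U=-\nabla p$ (mean-zero convolution). This turns $\frac1N\sum_i v(x_i)\cdot\nabla p(x_i)$ together with the $\xi$-dependent cross terms into the commutator
\[
\frac{1}{2\varepsilon}\iint_{\Delta^c}(v(x)-v(y))\cdot\nabla V(x-y)\,(\rho_{\varepsilon,N}-1-\varepsilon\mathfrak U)^{\otimes2}(\dd x\,\dd y).
\]
The terms $\frac1N\sum_i\xi_i\cdot\nabla p(x_i)$ cancel against the time derivative of the cross term $-\int p\,\rho_{\varepsilon,N}$.

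The time derivative of the background $\varepsilon\mathfrak U$ yields $-\int\partial_tp\,(\rho_{\varepsilon,N}-1-\varepsilon\mathfrak U)$. The leftover field term $\int v\cdot\nabla V\ast\ldots$ is rewritten via $\mathrm{div}\,v=0$ as $-\int\mathrm{div}(-\Delta)^{-1}(v\mathfrak U)\,(\rho_{\varepsilon,N}-1-\varepsilon\mathfrak U)$. This bookkeeping of which cross terms cancel, and where the factor $\varepsilon$ in $\varepsilon\mathfrak U$ is absorbed, is tedious but mechanical. It yields the stated identity.

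\emph{Gronwall.} We bound the four terms as follows.
\begin{itemize}
\item The first term is at most $2\|D_xv\|_\infty\mathcal K_{\varepsilon,N}$.
\item The commutator term is handled by Theorem \ref{commutator estimate} with $\rho=1+\varepsilon\mathfrak U$, which is bounded by Lemma \ref{basic estimates} and Theorem \ref{Well posedness of EB}. This gives $C\|D_xv\|_\infty\bigl(\mathcal E_{\varepsilon,N}+\varepsilon^{-1}\frac{1+\log N}{N}\bigr)$.
\item The last two terms are tested against $\varphi=\mathrm{div}(-\Delta)^{-1}(v\mathfrak U)$ and $\varphi=\partial_tp$, both Lipschitz by Lemma \ref{basic estimates}(iii),(iv). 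Proposition \ref{coercivity inequality} bounds each by $C N^{-1/2}+C\bigl(\varepsilon\,\mathcal E_{\varepsilon,N}\bigr)^{1/2}\le C N^{-1/2}+C\varepsilon+\mathcal E_{\varepsilon,N}$.
\end{itemize}

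We thus get $\dot{\mathcal E}_{\varepsilon,N}\le C\mathcal E_{\varepsilon,N}+C\bigl(\frac{\log N}{N\varepsilon}+\varepsilon+N^{-1/2}\bigr)$. Since $\mathcal E_{\varepsilon,N}\ge0$ by Proposition \ref{non negativity}, Gronwall's inequality concludes under $\frac{\log N}{N\varepsilon}\to0$.

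The main obstacle is the exact algebra of the interaction derivative. The atomic measure forces working on $\Delta^c$, where $\Delta V=1-\delta_0$ cannot be used naively. We will follow Han-Kwan--Iacobelli's computation, treating the $\delta_0$ parts by the exclusion of $i=j$, and verify that the time derivative of the corrected background produces precisely the $\partial_tp$ and $\mathrm{div}(-\Delta)^{-1}(v\mathfrak U)$ terms.
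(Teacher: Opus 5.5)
Your proposal is correct and follows essentially the paper's route. Both proofs split $\mathcal V_N(1+\varepsilon\mathfrak U,\rho_{\varepsilon,N})$ into particle--particle, cross and field--field pieces, cancel the Lorentz terms by pointwise skew-symmetry, and convert $\frac{1}{N^2\varepsilon}\sum_{i\neq j}v(x_i)\cdot\nabla V(x_i-x_j)-\frac1N\sum_i v(x_i)\cdot\nabla p(x_i)$ into the commutator plus the $\mathrm{div}(-\Delta)^{-1}(v\mathfrak U)$ remainder. Both then close with Theorem \ref{commutator estimate}, Proposition \ref{coercivity inequality} and Gr\"onwall. Differentiating the kinetic part directly instead of invoking Lemma \ref{Conservation of discrete energy}, and using $V\ast\mathfrak U=p$ from the outset, are only cosmetic differences.

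One small slip: since $V\ast\mathfrak U=p$ (as your cross term $-\int p\,\rho_{\varepsilon,N}$ already presupposes), one has $\nabla V\ast\mathfrak U=+\nabla p$, not $-\nabla p$.
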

\begin{proof}  We divide the proof into five steps.

\smallskip
\textbf{1}. \textit{Calculation of $\frac{\dd}{\dd t}\mathcal{E}_{\varepsilon,N}(t)$}.
We calculate the time-derivative of $\mathcal{E}_{\varepsilon,N}(t)$.
By Lemma \ref{Conservation of discrete energy}, 
we see that $\frac{\dd}{\dd t}\mathcal{F}_{\varepsilon,N}(t)=0$, which implies 
\begin{align*}
\frac{\dd}{\dd t}\mathcal{E}_{\varepsilon,N}(t)
&=\frac{\dd}{\dd t}\Big(-\frac{1}{N}\sum_{i=1}^{N}\xi_{i}(t)\cdot v(t,x_{i}(t))+\frac{1}{2N}\sum_{i=1}^{N}\left\vert v(t,x_{i}(t))\right\vert^{2} \Big)\\
&\quad-\frac{\dd}{\dd t}\frac{1}{2\varepsilon}\iint_{\Delta^{c}}V(x-y)(1+\varepsilon\mathfrak{U}(t,y))\rho_{\varepsilon,N}(t,\dd x)\ \dd y \\
&\quad-\frac{\dd}{\dd t}\frac{1}{2\varepsilon}\iint_{\Delta^{c}}V(x-y)(1+\varepsilon\mathfrak{U}(t,x))\rho_{\varepsilon,N}(t,\dd y)\ \dd x\\
&\quad+\frac{\dd}{\dd t}\frac{1}{2\varepsilon}\int_{\mathbb{T}^{2}} V\ast (1+\varepsilon\mathfrak{U})(t,x)(1+\varepsilon\mathfrak{U})(t,x)\ \dd x\\
&= \frac{\dd}{\dd t}\Big(-\frac{1}{N}\sum_{i=1}^{N}\xi_{i}(t)\cdot v(t,x_{i}(t))+\frac{1}{2N}\sum_{i=1}^{N}\left\vert v(t,x_{i}(t))\right\vert^{2} \Big)\\
&\quad-\frac{\dd}{\dd t}\frac{1}{N}\sum_{i=1}^{N}V\ast \mathfrak{U}(t,x_{i}(t))\\
&\quad+\frac{\dd}{\dd t}\frac{1}{2\varepsilon}\int_{\mathbb{T}^{2}} V\ast (1+\varepsilon\mathfrak{U})(t,x)(1+\varepsilon \mathfrak{U})(t,x)\ \dd x, 
\end{align*}
where, in the last identity, we have used that $V$ is even so that  
\begin{align*}
&\iint_{\Delta^{c}}V(x-y)(1+\varepsilon\mathfrak{U}(t,y))\rho_{\varepsilon,N}(t,\dd x)\ \dd y\\
&=\iint_{\Delta^{c}}V(x-y)(1+\varepsilon\mathfrak{U}(t,x))\rho_{\varepsilon,N}(t,\dd y)\ \dd x
=\frac{\varepsilon}{N}\sum_{i=1}^{N}V\ast \mathfrak{U}(t,x_{i}(t)).  
\end{align*}
To lighten notation, we hereafter omit the time variable $t$ when no ambiguity arises. Thanks to \eqref{trajectories intro}, we obtain
\begin{align}
&\frac{\dd}{\dd t}\Big(-\frac{1}{N}\sum_{i=1}^{N}\xi_{i}\cdot v(x_{i})\Big)& \notag\\
&=-\frac{1}{N}\sum_{i=1}^{N}\dot \xi_{i}\cdot v(x_{i})-\frac{1}{N}\sum_{i=1}^{N}\xi_{i}\cdot (\partial_{t}v(x_{i})+\dot x_{i}D_{x}v(x_{i}))& \notag\\
&=\frac{1}{N}\sum_{i=1}^{N}
\Big(B(x_{i})\xi_{i}^{\perp}+\frac{1}{N\varepsilon}\sum_{j:j\neq i}\nabla_{x} V(x_{i}-x_{j})\Big)\cdot v(x_{i})-\frac{1}{N}\sum_{i=1}^{N}\xi_{i}\cdot (\partial_{t}v(x_{i})+\xi_{i}D_{x}v(x_{i}))& \notag\\
&=\frac{1}{N^{2}\varepsilon}\sum_{i\neq j}\nabla_{x} V(x_{i}-x_{j})\cdot v(x_{i})
-\frac{1}{N}\sum_{i=1}^{N}\xi_{i}\cdot(\partial_{t}v(x_{i})+\xi_{i}D_{x}v(x_{i})) \notag\\
&\quad\,+\frac{1}{N}\sum_{i=1}^{N}B(x_{i})\xi_{i}^{\perp}\cdot v(x_{i}). \label{first time derivative}
\end{align}
In addition, we have  
\begin{align}
\frac{\dd}{\dd t}\Big(\frac{1}{2N}\sum_{i=1}^{N}\left\vert v(x_{i})\right\vert^{2} \Big)
&=\frac{1}{N}\sum_{i=1}^{N}v(x_{i})\cdot\left(\partial_{t}v(x_{i})+\dot x_{i}D_{x}v(x_{i})\right) \notag\\
&=\frac{1}{N}\sum_{i=1}^{N}v(x_{i})\cdot\left(\partial_{t}v(x_{i})+\xi_{i}D_{x}v(x_{i})\right). \label{second time derivative}
\end{align}
Moreover, we compute 
\begin{align}
-\frac{\dd}{\dd t}\frac{1}{N}\sum_{i=1}^{N}V\ast \mathfrak{U}(x_{i})=&-\frac{1}{N}\sum_{i=1}^{N}\int_{\mathbb{T}^{2}} V(x_{i}-y)\partial_{t}\mathfrak{U}(y)\ \dd y \notag\\
&-\frac{1}{N}\sum_{i=1}^{N}\dot x_{i}\cdot \int_{\mathbb{T}^{2}} \nabla_{x} V(x_{i}-y)\mathfrak{U}(y)\ \dd y \notag\\
=& -\frac{1}{N}\sum_{i=1}^
{N}\int_{\mathbb{T}^{2}} V(x_{i}-y)\partial_{t}\mathfrak{U}(y)\ \dd y-\frac{1}{N}\sum_{i=1}^{N}\xi_{i}\cdot \int_{\mathbb{T}^{2}} \nabla_{x} V(x_{i}-y)\mathfrak{U}(y)\ \dd y 
\end{align}
and 
\begin{align}&
\frac{\dd}{\dd t}\frac{1}{2\varepsilon}\int_{\mathbb{T}^{2}} V\ast (1+\varepsilon\mathfrak{U})(x)(1+\varepsilon \mathfrak{U})(x)\ \dd x \notag=\frac{\varepsilon}{2}\frac{\dd}{\dd t}\int_{\mathbb{T}^{2}} V\ast \mathfrak{U}(x)\mathfrak{U}(x)\ \dd x\\
&=\varepsilon\iint_{\mathbb{T}^{2}\times \mathbb{T}^{2}}V(x-y)\mathfrak{U}(x)\partial_{t}\mathfrak{U}(y)\ \dd x\dd y=\iint_{\mathbb{T}^{2}\times \mathbb{T}^{2}} V(x-y)(1+\varepsilon\mathfrak{U}(x))\partial_{t}\mathfrak{U}(y)\ \dd x\dd y. 
\label{time derivative of corrector 2}
\end{align}
So gathering \eqref{first time derivative}--\eqref{time derivative of corrector 2},
we find 
\begin{align}
\frac{\dd}{\dd t}\mathcal{E}_{\varepsilon,N}(t)
=&\,\frac{1}{N^{2}\varepsilon}\sum_{i\neq j}\nabla_{x} V(x_{i}-x_{j})\cdot v(x_{i})
-\frac{1}{N}\sum_{i=1}^{N}\xi_{i}\cdot(\partial_{t}v(x_{i})+\xi_{i}D_{x}v(x_{i})) \notag\\
&\,+\frac{1}{N}\sum_{i=1}^{N}B(x_{i})\xi_{i}^{\perp}\cdot v(x_{i})+\frac{1}{N}\sum_{i=1}^{N}v(x_{i})\cdot\left(\partial_{t}v(x_{i})+\xi_{i}D_{x}v(x_{i})\right) \notag\\
&\,-\frac{1}{N}\sum_{i=1}^
{N}\int_{\mathbb{T}^{2}} V(x_{i}-y)\partial_{t}\mathfrak{U}(y)\ \dd y-\frac{1}{N}\sum_{i=1}^{N}\xi_{i}\cdot \int_{\mathbb{T}^{2}} \nabla_{x} V(x_{i}-y)\mathfrak{U}(y)\ \dd y \notag\\
&\,+\iint_{\mathbb{T}^{2}\times \mathbb{T}^{2}} V(x-y)(1+\varepsilon\mathfrak{U}(x))\partial_{t}\mathfrak{U}(y)\ \dd x\dd y:= \sum_{k=1}^{7}I_{k}. 
\end{align}

\smallskip
\textbf{2}. \textit{Calculation of $I_{2}+I_{3}+I_{4}+I_{6}$}. First, notice that
\begin{align}
I_{2}+I_{4}
=\frac{1}{N}\sum_{i=1}^{N}(v(x_{i})-\xi_{i})\cdot 
\big(\partial_{t}v(x_{i})+\xi_{i}D_{x}v(x_{i})\big). \label{I2 and I4 calculation} 
\end{align}
Since $v$ is a solution to \eqref{magnetized Euler}, we have
\begin{align}
\partial_{t}v(x_{i})+\xi_{i}D_{x}v(x_{i})=(\xi_{i}-v(x_{i}))D_{x}v(t,x_{i})-B(x_{i})v^{\perp}(x_{i})-\nabla_{x} p(x_{i}).  \label{manipulation of first term in I}    
\end{align}
In addition, using that $-\Delta_{x} p=\mathfrak{U}$, we obtain
\begin{align}
I_{6}&=-\frac{1}{N}\sum_{i=1}^{N}\xi_{i}\cdot \int_{\mathbb{T}^{2}} \nabla_{x} V(x_{i}-y)\mathfrak{U}(y)\ \dd y \notag\\
&=\frac{1}{N}\sum_{i=1}^{N} \xi_{i}\cdot \int_{\mathbb{T}^{2}} \nabla_{x} V(x_{i}-y)\Delta_{y} p(y)\ \dd y=-\frac{1}{N}\sum_{i=1}^{N}\xi_{i}\cdot \nabla_{x} p(x_{i}). \label{manipulation of second term in I}  
\end{align}
In the last identity above, we have integrated by parts, together with the identity 
$-\Delta_{x} V=\delta_{0}-1$. 
Therefore, gathering \eqref{I2 and I4 calculation}--\eqref{manipulation of second term in I}, 
we obtain 
\begin{align}
I_{2}+I_{4}+I_{6}=&-\frac{1}{N}\sum_{i=1}^{N}(v(x_{i})-\xi_{i})\cdot\big((v(x_{i})-\xi_{i})D_{x}v(x_{i})\big) \notag\\
&-\frac{1}{N}\sum_{i=1}^{N}v(x_{i})\cdot \nabla_{x} p (x_{i})-\frac{1}{N}\sum_{i=1}^{N}B(x_{i})(v(x_{i})-\xi_{i})\cdot v^{\perp}(x_{i}). \label{I2I4I6} 
\end{align}
Furthermore, we compute 
\begin{align}
I_{3}&=\frac{1}{N}\sum_{i=1}^{N}B(x_{i})\xi_{i}^{\perp}\cdot v(x_{i})=-\frac{1}{N}\sum_{i=1}^{N}B(x_{i})\xi_{i}\cdot v^{\perp}(x_{i}) \notag\\
&=-\frac{1}{N}\sum_{i=1}^{N}B(x_{i})(\xi_{i}-v(x_{i}))\cdot v^{\perp}(x_{i})=\frac{1}{N}\sum_{i=1}^{N}B(x_{i})(v(x_{i})-\xi_{i})\cdot v^{\perp}(x_{i}). \label{I3 expanded}      
\end{align}
Thus, combining \eqref{I2I4I6} with \eqref{I3 expanded} yields
\begin{align}
I:&= I_{2}+I_{3}+I_{4}+I_{6}\notag\\
&=-\frac{1}{N}\sum_{i=1}^{N}(v(x_{i})-\xi_{i})\cdot\big((v(x_{i})-\xi_{i})D_{x}v(x_{i})\big)
-\frac{1}{N}\sum_{i=1}^{N}v(x_{i})\cdot \nabla_{x} p(x_{i}).  \label{I2-I8 formula}  
\end{align}

\smallskip
\textbf{3}. \textit{Calculation of $I_{1}$}. We claim to have the following formula: 
\begin{align}
&\frac{1}{2\varepsilon}\iint_{\Delta^{c}}(v(x)-v(y))\cdot \nabla_{x} V(x-y)\left(\rho_{\varepsilon,N}-1-\varepsilon \mathfrak{U}\right)^{\otimes 2}(\dd x\dd y) \notag\\
&=\frac{1}{N^{2}\varepsilon}\sum_{i\neq j}v(x_{i}) \cdot \nabla_{x} V(x_{i}-x_{j}) \notag\\
&\quad\,-\frac{1}{N}\sum_{i=1}^{N}v(x_{i})\cdot \nabla_{x} p(x_{i})+\int_{\mathbb{T}^{2}} \mathrm{div}_{x} (-\Delta_{x} )^{-1}(v\mathfrak{U})(x)\left(\rho_{\varepsilon,N}-1-\varepsilon \mathfrak{U}\right)(\dd x). \label{second variation formula} 
\end{align}
To prove \eqref{second variation formula}, we first note that 
\begin{align}
&\frac{1}{2\varepsilon}\iint_{\Delta^{c}}(v(x)-v(y))\cdot \nabla_{x}V(x-y)\rho_{\varepsilon,N}^{\otimes 2}(t,\dd x\dd y)\notag\\
&=\frac{1}{\varepsilon}\iint_{\Delta^{c}}v(x)\cdot \nabla_{x}V(x-y)\rho_{\varepsilon,N}^{\otimes 2}(t,\dd x\dd y) \notag\\
&=\frac{1}{\varepsilon N^{2}}\sum_{i\neq j}v(x_{i})\cdot \nabla_{x}V(x_{i}-x_{j}).    \label{pure product} 
\end{align}
In addition, we have 
\begin{align*}
&-\frac{1}{2\varepsilon}\iint_{\Delta^{c}}(v(x)-v(y))\cdot \nabla_{x}V(x-y)\rho_{\varepsilon,N}(\dd x)(1+\varepsilon \mathfrak{U})( y)\\
&-\frac{1}{2\varepsilon}\iint_{\Delta^{c}}(v(x)-v(y))\cdot \nabla_{x}V(x-y)(1+\varepsilon\mathfrak{U})(x)\rho_{\varepsilon,N}(\dd y)\\
&= -\frac{1}{\varepsilon}\iint_{\Delta^{c}}(v(x)-v(y))\cdot \nabla_{x}V(x-y)(1+\varepsilon \mathfrak{U})(x)\rho_{\varepsilon,N}(\dd y)\\
&=-\frac{1}{\varepsilon}\iint_{\Delta^{c}}v(x)\cdot \nabla_{x} V(x-y)(1+\varepsilon \mathfrak{U})(x)\rho_{\varepsilon,N}(\dd y)\nonumber\\
&\quad+\frac{1}{\varepsilon}\iint_{\Delta^{c}}v(y)\cdot \nabla_{x}V(x-y)(1+\varepsilon\mathfrak{U})(x)\rho_{\varepsilon,N}(\dd y).     
\end{align*}
Observe that 
\begin{align*}
&-\frac{1}{\varepsilon}\iint_{\Delta^{c}}v(x)\cdot \nabla_{x}V(x-y) (1+\varepsilon \mathfrak{U})(x)\rho_{\varepsilon,N}(\dd y)\\
&=-\iint_{\Delta^{c}}v(x)\cdot \nabla_{x}V(x-y)\mathfrak{U}(x)\rho_{\varepsilon,N}(\dd y)\\
&=\int \nabla_{x} V\ast (v\mathfrak{U})(y)\rho_{\varepsilon,N}(\dd y)=\int \mathrm{div}_{x}(-\Delta_{x})^{-1}(v\mathfrak{U})(x)\rho_{\varepsilon,N}(\dd x),  
\end{align*}
and that 
\begin{align*}
&\frac{1}{\varepsilon}\iint_{\Delta^{c}} v(y)\cdot \nabla_{x}V(x-y)(1+\varepsilon \mathfrak{U})(x)\rho_{\varepsilon,N}(\dd y)\\
&=-\int v(y)\cdot \nabla_{x} V\ast \mathfrak{U}(y)\rho_{\varepsilon,N}(\dd y)
=-\frac{1}{N}\sum_{i=1}^{N}v(x_{i})\cdot \nabla_{x}p(x_{i}).    
\end{align*}
Consequently, we obtain the identity:  
\begin{align}
&-\frac{1}{2\varepsilon}\iint_{\Delta^{c}}(v(x)-v(y))\cdot \nabla_{x}V(x-y)\rho_{\varepsilon,N}(\dd x)(1+\varepsilon \mathfrak{U})( y) \notag\\
&-\frac{1}{2\varepsilon}\iint_{\Delta^{c}}(v(x)-v(y))\cdot \nabla_{x}V(x-y)(1+\varepsilon\mathfrak{U})(x)\rho_{\varepsilon,N}(\dd y)\notag\\
&=-\frac{1}{N}\sum_{i=1}^{N}v(x_{i})\cdot \nabla_{x}p(x_{i})+\int \mathrm{div}_{x}(-\Delta_{x})^{-1}(v\mathfrak{U})(x)\rho_{\varepsilon,N}(\dd x).  \label{sec identity}   
\end{align}
Furthermore, we have
\begin{align}
&\frac{1}{2\varepsilon}\iint(v(x)-v(y))\cdot \nabla_{x}V(x-y)(1+\varepsilon \mathfrak{U})(x)(1+\varepsilon\mathfrak{U}(y))\ \dd x\dd y \notag\\
&=\frac{1}{\varepsilon}\iint v(x)\cdot \nabla_{x}V(x-y)(1+\varepsilon \mathfrak{U})(x)(1+\varepsilon\mathfrak{U})(y)\ \dd x\dd y \notag\\
&=\int v(x)\cdot \nabla_{x} V\ast \mathfrak{U}(x)(1+\varepsilon \mathfrak{U})(x)\ \dd x \notag\\
&=-\varepsilon\int \mathfrak{  U}(x)\nabla_{x}V\ast (v\mathfrak{U})(x)\ \dd x=-\int \mathrm{div}_{x}(-\Delta_{x})^
{-1}(v\mathfrak{U})(x)(1+\varepsilon \mathfrak{U})(x)\ \dd x. \label{third identity}
\end{align}
Gathering \eqref{pure product}--\eqref{third identity} 
yields \eqref{second variation formula}. It follows from \eqref{second variation formula} that 
\begin{align}
I_{1}=&  \frac{1}{2\varepsilon}\iint_{\Delta^{c}}(v(x)-v(y))\cdot \nabla_{x} V(x-y)\left(\rho_{\varepsilon,N}-1-\varepsilon \mathfrak{U}\right)^{\otimes 2}(\dd x\dd y) \notag\\
&+\frac{1}{N}\sum_{i=1}^{N}v(x_{i})\cdot \nabla_{x} p(x_{i})-\int_{\mathbb{T}^{2}} \mathrm{div}_{x} (-\Delta_{x})^{-1}(v\mathfrak{U})(x)\left(\rho_{\varepsilon,N}-1-\varepsilon\mathfrak{U}\right)(\dd x). \label{I1 formula}
\end{align}
Combining \eqref{I2-I8 formula} with \eqref{I1 formula}, we conclude  
\begin{align}
I_{1}+I=&-\frac{1}{N}\sum_{i=1}^{N}(v(x_{i})-\xi_{i})\cdot((v(x_{i})-\xi_{i})D_{x}v(x_{i}))
\notag\\
&+\frac{1}{2\varepsilon}\iint_{\Delta^{c}}(v(x)-v(y))\cdot \nabla_{x} V(x-y)\left(\rho_{\varepsilon,N}-1-\varepsilon \mathfrak{U}\right)^{\otimes 2}(\dd x\dd y) \notag \notag\\
&-\int_{\mathbb{T}^{2}} \mathrm{div}_{x} (-\Delta_{x})^{-1}(v\mathfrak{U})(x)\left(\rho_{\varepsilon,N}-1-\varepsilon\mathfrak{U}\right)(\dd x). \label{I1+I formula}
\end{align}

\smallskip
\textbf{4}. \textit{Calculation of $I_{5}+I_{7}$}. 
We have 
\begin{align}
I_{5}+I_{7}&=-\frac{1}{N}\sum_{i=1}^{N}\int_{\mathbb{T}^{2}} V(x_{i}-y)\partial_{t}\mathfrak{U}(y)\ \dd y +\iint_{\mathbb{T}^{2}\times \mathbb{T}^{2}} V(x-y)(1+\varepsilon \mathfrak{U}(x))\partial_{t}\mathfrak{U}(y)\ \dd x\dd y\notag\\
&= \int_{\mathbb{T}^{2}} V\ast \partial_{t}\mathfrak{U}(x)\left(1+\varepsilon\mathfrak{U}-\rho_{\varepsilon,N}\right)(\dd x)=-\int_{\mathbb{T}^{2}}\partial_{t}p(x)\left(\rho_{\varepsilon,N}-1-\varepsilon\mathfrak{U}\right)(\dd x). \label{I7+I9 formula}
\end{align}
The combination of \eqref{I1+I formula}--\eqref{I7+I9 formula} yields 
\begin{align*}
\frac{\dd}{\dd t}\mathcal{E}_{\varepsilon,N}(t)
=& -\frac{1}{N}\sum_{i=1}^{N}(v(x_{i})-\xi_{i})\cdot\big((v(x_{i})-\xi_{i})D_{x}v(x_{i})\big)
\notag\\
&+\frac{1}{2\varepsilon}\iint_{\Delta^{c}}(v(x)-v(y))\cdot \nabla_{x} V(x-y)\left(\rho_{\varepsilon,N}-1-\varepsilon \mathfrak{U}\right)^{\otimes 2}(\dd x\dd y) \notag \notag\\
&-\int_{\mathbb{T}^{2}} \mathrm{div}_{x} (-\Delta_{x})^{-1}(v\mathfrak{U})(x)\left(\rho_{\varepsilon,N}-1-\varepsilon\mathfrak{U}\right)(\dd x)\\
&-\int_{\mathbb{T}^{2}}\partial_{t}p(x)\left(\rho_{\varepsilon,N}-1-\varepsilon\mathfrak{U}\right)(\dd x):= \sum_{k=1}^{4}J_{k}. 
\end{align*}
This concludes \eqref{time derivative of modulated energy statement}.

\smallskip
\textbf{5}. \textit{Conclusion}.
We proceed by estimating separately each one of the $J_{k}$. 
The term $J_{1}$ is recast as 
\begin{align*}
J_{1}=-\frac{1}{N}\sum_{i=1}^{N}(v(x_{i})-\xi_{i})^{\otimes 2}:D^{\mathbf{s}}_{x}v(x_{i})    
\end{align*} 
so that
\begin{align}
J_{1}\leq \left\Vert D^{\mathbf{s}}_{x}v\right\Vert_{\infty}\frac{1}{N}\sum_{i=1}^{N} \left\vert \xi_{i}-v(t,x_{i})\right\vert^{2} \leq 2\left\Vert D^{\mathbf{s}}_{x}v\right\Vert_{\infty} \mathcal{K}_{\varepsilon,N}(t)\leq 2\left\Vert D^{\mathbf{s}}_{x}v\right\Vert_{\infty}\mathcal{E}_{\varepsilon,N}(t).\label{J1 est}
\end{align}
To estimate $J_{2}$, we invoke Theorem \ref{Commutator est} 
with $\rho=1+\varepsilon\mathfrak{U}$ in order to find 
\begin{align}
|J_{2}|&\leq C\left\Vert D_{x} v\right\Vert_{\infty}
\Big(\frac{1}{2\varepsilon}\iint_{\Delta^{c}}V(x-y)
(\rho_{\varepsilon,N}-1-\varepsilon\mathfrak{U})^{\otimes 2}(\dd x\dd y)\notag\\
&
\hspace{2.7cm}
+C\big(1+\left\Vert 1+\varepsilon\mathfrak{U}\right\Vert_{\infty}\big)
\frac{1+\log N}{2\varepsilon N}\Big) \notag\\
&\leq  C\left\Vert D_{x}v\right\Vert_{\infty}\mathcal{E}_{\varepsilon,N}(t).
\label{J2 est}
\end{align}
To estimate $J_{3}$,  
we invoke Proposition \ref{coercivity inequality} with $\varphi=-\mathrm{div}_{x}(-\Delta_{x})^{-1}(v\mathfrak{U})$ and $\rho=1+\varepsilon\mathfrak{U}$ in order to find
\begin{align}
J_{3}&\leq C\frac{\left\Vert\nabla_{x}\mathrm{div}_{x}(-\Delta_{x})^{-1}(v\mathfrak{U})\right\Vert_{\infty}}{\sqrt{N}} \notag\\
&\quad+\left\Vert \nabla_{x}\mathrm{div}_{x}(-\Delta_{x})^{-1}(v\mathfrak{U})\right\Vert_{L^{\infty}_{t}L^{2}_{x}}
\Big(2\varepsilon\mathcal{V}_{\varepsilon,N}(t)+C(2+\varepsilon\left\Vert \mathfrak{U}\right\Vert_{L^{\infty}_{t}L^{\infty}_{x}})\,\frac{1+\log N}{N}\Big)^{\frac{1}{2}} \notag\\
&\leq\, C\frac{\left\Vert\nabla_{x}\mathrm{div}_{x}(-\Delta_{x})^{-1}(v\mathfrak{U})\right\Vert_{\infty}}{\sqrt{N}}+\varepsilon\left\Vert \nabla_{x}\mathrm{div}_{x}(-\Delta)^{-1}(v\mathfrak{U})\right\Vert_{L^{\infty}_{t}L^{2}_{x}}^{2} \notag\\
&\quad+2\mathcal{V}_{\varepsilon,N}(t)+C\frac{2+\varepsilon\left\Vert \mathfrak{U}\right\Vert_{\infty}}{\varepsilon}\,\frac{1+\log N}{N}.
\label{est J3} 
\end{align}
Similarly, applying Proposition \ref{coercivity inequality} with $\varphi=\partial_t p$ 
and $\rho=1+\varepsilon\mathfrak{U}$, we obtain
\begin{align}
J_{4}&\leq
C\frac{\left\Vert \partial_{t}\nabla_{x}p\right\Vert_{\infty}}{\sqrt{N}}
+\left\Vert \partial_{t}\nabla_{x}p\right\Vert_{L^{\infty}_{t}L^{2}_{x}}
\Big(\mathcal{V}_{N}(1+\varepsilon\mathfrak{U},\rho_{\varepsilon,N})
+C\big(1+\left\Vert 1+\varepsilon \mathfrak{U}\right\Vert_{\infty} \big)
  \frac{1+\log N}{N})\Big)^{\frac{1}{2}} 
 \notag\\
 &\leq C \frac{\left\Vert \partial_{t}\nabla_{x}p\right\Vert_{\infty} }{\sqrt{N}}+\frac{\varepsilon}{2}\left\Vert \partial_{t}\nabla_{x}p\right\Vert^{2}_{L^{\infty}_{t}L^{2}_{x}}+\frac{1}{2\varepsilon}\mathcal{V}_{N}(1+\varepsilon\mathfrak{U},\rho_{\varepsilon,N})+C\frac{1+\left\Vert 1+\varepsilon\mathfrak{U}\right\Vert_{\infty}}{\varepsilon}\frac{1+\log N}{N}.
\label{J4 est}  
\end{align}
Gathering \eqref{J1 est}--\eqref{J4 est} and utilizing Lemma  \ref{basic estimates},
we conclude 
\begin{align*}
\frac{\dd}{\dd t}\mathcal{E}_{\varepsilon,N}(t)\lesssim \mathcal{E}_{\varepsilon,N}(t)+\varepsilon+\frac{\log N}{N\varepsilon}+\frac{1}{\sqrt{N}}.    
\end{align*}
Therefore, it follows by Gr\"onwall's lemma that 
\begin{align*}
\underset{t\in [0,T]}{\sup}\mathcal{E}_{\varepsilon,N}(t)\underset{N\rightarrow \infty}{\rightarrow}0 \ \qquad \mbox{provided}\ \mathcal{E}_{\varepsilon,N}(0)\underset{N\rightarrow \infty}{\longrightarrow}0 \ \mbox{and}\ \frac{\log N}{\varepsilon N}\underset{N\rightarrow \infty}{\longrightarrow}0.  \end{align*}
\end{proof}

\smallskip
\textit{Proof of Theorem \ref{main thm renormalized}}.
Fix a test function $\varphi\in W^{1,\infty}(\mathbb{T}^{2})$ with $\left\Vert \varphi\right\Vert_{W^{1,\infty}}\leq 1$. 
By means of Proposition \ref{coercivity inequality}, we have 
\begin{align*}
\left|\int_{\mathbb{T}^{2}}\varphi(x)(\rho_{\varepsilon,N}-1)(t,\dd x)\right|\leq \frac{C}{\sqrt{N}}+C\varepsilon+\sqrt{2\varepsilon}\sqrt{\mathcal{E}_{\varepsilon,N}(t)}.
\end{align*}
Maximizing the previous inequality  over all $\varphi\in W^{1,\infty}(\mathbb{T}^{2})$  
and using Theorem \ref{time derivative of modulated energy statement}, we conclude 
\begin{align}
\underset{t\in [0,T]}{\sup}W_{1}(\rho_{\varepsilon,N}(t,\cdot),1)\underset{N\rightarrow \infty} {\longrightarrow} 0.  
\label{convergence-rhoN}   
\end{align}
Next, fix a test vector field $b\in W^{1,\infty}(\mathbb{T}
^{2};\mathbb{R}^{2})$ with $\left\Vert b\right\Vert_{W^{1,\infty}(\mathbb{T}^{2};\mathbb{R}^{2})}\leq 1 $. 
Then we have 
\begin{align*}
\int_{\mathbb{T}^{2}}b\cdot (J_{\varepsilon,N}-v)(t,\dd x)
&=\int_{\mathbb{T}^{2}}b\cdot\Big(\frac{1}{N}\sum_{i=1}^{N}\xi_{i}\delta_{x_{i}}-\frac{1}{N}\sum_{i=1}^{N}\delta_{x_{i}}v\Big)(t,\dd x)\\
&\quad+\int_{\mathbb{T}^{2}}b\cdot v\Big(\frac{1}{N}\sum_{i=1}^{N}\delta_{x_{i}}-1\Big)(t,\dd x)
:= \mathcal{I}(t)+\mathcal{J}(t).     
\end{align*}
To estimate $\mathcal{I}$, we apply the Cauchy-Schwarz inequality in order to find 
\begin{align}
|\mathcal{I}|=\Big|\frac{1}{N}\sum_{i=1}^{N}b(x_{i})\cdot (\xi_{i}-v(x_{i}))\Big|
&\leq \frac{1}{\sqrt{N}}\Big(\sum_{i=1}^{N}\left\vert b\right\vert ^{2}(x_{i})\Big)^{\frac{1}{2}}\frac{1}{\sqrt{N}}\Big(\sum_{i=1}^{N}\left\vert \xi_{i}-v(x_{i})\right\vert^{2}\Big)^{\frac{1}{2}} \notag\\
&\leq \left\Vert b\right\Vert_{\infty}\sqrt{2\mathcal{E}_{\varepsilon,N}(t)}. \label{I vanishing}  \end{align}
Therefore, we deduce 
\begin{align*}
\underset{t\in [0,T]}{\sup}|\mathcal{I}(t)|\leq  \underset{t\in [0,T]}{\sup}\sqrt{2\mathcal{E}_{\varepsilon,N}(t)}{\underset{N\rightarrow \infty}{\longrightarrow}0}.
\end{align*}
To estimate $\mathcal{J}$, we apply duality to obtain 
\begin{align}
\underset{t\in [0,T]}{\sup}|\mathcal{J}(t)|\leq \left\Vert b\cdot v\right\Vert_{L^{\infty}_{t}W^{1,\infty}_{x}}\underset{t\in [0,T]}{\sup}W_{1}(\rho_{\varepsilon,N}(t,\cdot),1)\underset{N\rightarrow \infty}{\longrightarrow}0
\label{J vanishing}
\end{align}
by \eqref{convergence-rhoN}. 
Combining \eqref{I vanishing}--\eqref{J vanishing}, we conclude 
\begin{align*}
\underset{t\in [0,T]}{\sup}\left\Vert (J_{\varepsilon,N}-v\,\dd x)(t,\cdot)\right\Vert_{\mathrm{BL}^{*}}\underset{N\rightarrow \infty}{\longrightarrow} 0,
\end{align*}
where the bounded Lipschitz dual norm is the one defined above.
\qed

\section{Well-Posedness of Problem \eqref{trajectories intro}}
\label{wellposed-trajectories-sec}
In this section, we establish the existence and uniqueness of global solutions to \eqref{trajectories intro}. The proof proceeds by first establishing uniform separation of the particles and then applying a blow-up alternative. The presence of a general external magnetic field requires additional pointwise control of the particle velocities, which ensures that the system remains globally Lipschitz along the flow (rather than only locally Lipschitz). Note also that the system is non-homogeneous since $B$ is time dependent. 
We begin by recalling that the $2$-D periodic Coulomb potential is a smooth local perturbation of its whole-space counterpart. More precisely,
$
V\in C^\infty(\mathbb T^2\setminus\{0\}),
$
and, defining
$
V_{\mathbb R^2}(x):=-\frac{1}{2\pi}\log|x|,
$
there exists an even function
$
V_{\mathrm{loc}}\in C^\infty\big((\mathbb R^2\setminus\mathbb Z^2)\cup\{0\}\big)
$
such that
\begin{align*}
V(x)
=V_{\mathbb{R}^{2}}(x)+V_{\mathrm{loc}}(x) \qquad \mbox{for all}\ x\in \mathbb{T}^{2}\setminus \{0\}.     
\end{align*}
We start by demonstrating the local well-posedness. 

\begin{prop}[Local well-posedness]\label{Short time well posed}
Suppose that
\begin{itemize}
\item[{\rm(i)}]$X_{N}^{0}=(x_{1}^{N,0},\cdots\!,x_{N}^{N,0})\in \Delta_{N}^{c}$ and $\Xi_{N}^{0}=(\xi_{1}^{N,0},\cdots\!,\xi_{N}^{N,0})\in \mathbb{R}^{2N};$
\smallskip
\item[{\rm(ii)}] $B\in C(\mathbb{R}_{+};W^{1,\infty}(\mathbb{T}^{2}))$. 
\end{itemize}
Then there exist $T_{\ast}>0$ such that \eqref{trajectories intro} 
has a unique solution 
$$
(X_{N}(t),\Xi_{N}(t))\in C^{1}([0,T^{\ast}];\mathbb{T}^{2N}\times \mathbb{R}^{2N}).
$$ 
\end{prop}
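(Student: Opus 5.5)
The plan is to recast \eqref{trajectories intro} as a first-order, non-autonomous ODE system on an open subset of phase space and apply the Cauchy--Lipschitz (Picard--Lindel\"of) theorem. Write $Z=(X_N,\Xi_N)$ and $\dot Z=\mathcal{G}(t,Z)$, where
\[
\mathcal{G}(t,Z)=\Big(\xi_1,\dots,\xi_N,\;\big(-B(t,x_i)\xi_i^{\perp}-\tfrac{1}{N\varepsilon}\textstyle\sum_{j\neq i}\nabla_xV(x_i-x_j)\big)_{i=1}^N\Big).
\]
This field is defined on the open set $\mathcal{O}:=\Delta_N^c\times\mathbb{R}^{2N}$, which contains the initial datum by hypothesis (i).

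The first step is to fix a compact neighbourhood of the initial datum inside $\mathcal{O}$. Let
\[
\delta_0:=\min_{i\neq j}\big|x_i^{N,0}-x_j^{N,0}\big|_{\mathbb{T}^2}>0,\qquad R_0:=\max_i|\xi_i^{N,0}|,
\]
and set
\[
K:=\big\{(X,\Xi)\,:\,|x_i-x_j|_{\mathbb{T}^2}\geq\delta_0/2\ \text{for } i\neq j,\ |\xi_i|\leq R_0+1\big\}.
\]
Using the decomposition $V=V_{\mathbb{R}^2}+V_{\mathrm{loc}}$ recalled above, $V\in C^\infty(\mathbb{T}^2\setminus\{0\})$. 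Hence $\nabla_xV$ and $\nabla_x^2V$ are bounded on $\{|x|_{\mathbb{T}^2}\geq\delta_0/4\}$, with $|\nabla V|\lesssim\delta_0^{-1}$ and $|\nabla^2V|\lesssim\delta_0^{-2}$ there. Consequently the interaction part of $\mathcal{G}$ is bounded and Lipschitz in $X$ on a slight enlargement of $K$, uniformly in $t$.

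The second step treats the magnetic term. Hypothesis (ii) gives $B\in C(\mathbb{R}_+;W^{1,\infty})$, so $\sup_{t\le 1}\|B(t)\|_{W^{1,\infty}}<\infty$. Then for $Z,Z'\in K$,
\[
|B(t,x_i)\xi_i^\perp-B(t,x_i')\xi_i'^\perp|\leq\|\nabla B\|_\infty(R_0+1)|x_i-x_i'|+\|B\|_\infty|\xi_i-\xi_i'|.
\]
This shows that $\mathcal{G}$ is Lipschitz in $Z$ on $K$, uniformly for $t\in[0,1]$. One subtlety remains: $\mathcal{G}$ is only continuous in $t$ in the sense that $t\mapsto B(t,\cdot)$ is continuous into $W^{1,\infty}\subset C^0$. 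This suffices, because then $t\mapsto\mathcal{G}(t,Z)$ is continuous for each $Z$, and $\mathcal{G}$ is jointly continuous on $[0,1]\times K$ by the uniform Lipschitz bound in $Z$. The Carath\'eodory/Picard hypotheses therefore hold.

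The final step chooses $T_*$ small enough that the Picard iterates stay in $K$. Since $|\mathcal{G}|\leq M$ on $[0,1]\times K$, with
\[
M\lesssim R_0+1+\|B\|_\infty(R_0+1)+\varepsilon^{-1}\delta_0^{-1},
\]
any solution satisfies $|x_i(t)-x_i^{N,0}|\leq(R_0+1)t$ and $|\xi_i(t)-\xi_i^{N,0}|\leq Mt$. Taking $T_*<\min\{1,\,1/M,\,\delta_0/(4(R_0+1))\}$ guarantees that pairwise distances stay at least $\delta_0/2$ and velocities at most $R_0+1$. Banach's fixed point theorem in $C([0,T_*];K)$ then gives a unique solution, which is $C^1$ because $\mathcal{G}$ is continuous. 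Uniqueness among all $C^1$ solutions follows from the same confinement argument: any other solution stays in $K$ for $t\leq T_*$, and Gr\"onwall's inequality applies there.

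I expect the only delicate point to be the handling of the Coulomb singularity. This is resolved by working on $K$, which is bounded away from the diagonal; there is no genuine obstacle at the local level, and the difficulty is deferred to the global continuation.
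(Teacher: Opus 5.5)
Your argument is correct, but it takes a different route from the paper. You apply Picard--Lindel\"of directly on a compact neighbourhood $K$ of the datum inside $\Delta_N^c\times\mathbb{R}^{2N}$, and you confine trajectories with the crude bound $|\mathcal{G}|\le M$. The paper instead replaces $V$ by a kernel $V_\eta$ that is regular at the origin and coincides with $V$ for $|x|\ge\eta/2$, with $\eta=\frac12\min_{i\neq j}|x_i^0-x_j^0|$. It gets a global regularized solution from the appendix theorem, which uses Cauchy--Lipschitz plus a velocity truncation to handle the merely locally Lipschitz term $B\xi^\perp$. It then uses conservation of the regularized energy (the magnetic force does no work) and Cauchy--Schwarz to obtain $|x_{i,\eta}(t)-x_i^0|\le\sqrt{2N\mathcal{F}_{\varepsilon,N}(0)}\,t$. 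Hence for $t\le T_*$ the particles stay $\eta$-separated, the cutoff is never felt, and the regularized solution solves \eqref{trajectories intro}. Uniqueness is by local Lipschitz continuity away from the diagonal, as in your last step.

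Your proof is shorter and self-contained: it needs no regularization, no appendix and no energy identity, and it fully suffices for the statement. What the paper's route buys is the form of $T_*$. It depends only on the initial minimal separation, $N$ and the conserved energy $\mathcal{F}_{\varepsilon,N}(0)$, and not on $B$. This is exactly what the continuation argument in the global theorem uses when it reuses the same $T_*$ at each restart. Your $T_*$ depends on $\delta_0$, $R_0$, $\|B\|_\infty$ and $\varepsilon^{-1}$. To feed it into that argument you would have to note that these stay uniformly controlled on $[0,T]$, via Lemma \ref{lemma:6.2} or $|\xi_i|^2\le 2N\mathcal{F}_{\varepsilon,N}(0)$, together with $\sup_{[0,T+1]}\|B\|_{W^{1,\infty}}$. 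The $B$-dependence can even be removed: since $B\xi_i^\perp\cdot\xi_i=0$, only the interaction force changes $|\xi_i|$.

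Two cosmetic points. First, the contraction in $C([0,T_*];K)$ also needs $T_*$ below the inverse Lipschitz constant of $\mathcal{G}$ on $K$, or else a weighted sup norm. Second, the Lipschitz bound for $\nabla V$ on the non-convex set $\{|x|_{\mathbb{T}^2}\ge\delta_0/2\}$ needs the usual split. For $|a-b|\le\delta_0/4$, use the mean value theorem along a segment that stays in $\{|x|\ge\delta_0/4\}$. For $|a-b|>\delta_0/4$, bound the difference by $2\sup|\nabla V|$.
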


\begin{proof}
Let $\Psi\in C^{\infty}_{0}(\mathbb{R})$ be an even function such that $\Psi(r)=1$ for $\left| r \right|\leq \frac{1}{8}$ and $\Psi(r)=0$ for $\left| r \right|\geq \frac{1}{4}$. 
Let $\psi\in C^{\infty}(\mathbb{T}^{2})$ be the radial function defined 
by $\psi(x) := \Psi(\left| x \right|)$. Set  
\[
\psi_{\eta}(x):=\Psi(\frac{\left|x\right|^{2}}{\eta^{2}}), \qquad\ 
V_{\eta}(x):= V_{\mathrm{loc}}(x)+(1-\psi_{\eta}(x))V_{\mathbb{R}^{2}}(x). 
\]
For brevity, in what follows we omit superscript $N$ from $x_{i,\eta}^{N}$ and $\xi_{i,\eta}^{N}$. Consider the regularized system: 
\begin{align}
\begin{cases}
\dot x_{i,\eta}(t)=\xi_{i,\eta}(t),\ &x_{i,\eta}(0)=x_{i}^{0}\\[1mm]
\dot \xi_{i,\eta}(t)=-B(t,x_{i,\eta}(t))\xi_{i,\eta}^{\perp}(t)-\frac{1}{N\varepsilon}\underset{j:j\neq i}{\sum}\nabla_{x} V_{\eta}(x_{i,\eta}(t)-x_{j,\eta}(t)), \,\,\, &\xi_{i,\eta}(0)=\xi_{i}^{0}. 
\end{cases}
\label{regularized system}
\end{align}
Our main objective is to obtain a lower bound on the separation of the particles, {\it i.e.,} the quantity $\underset{i\neq j}{\min}\left\vert x_{i,\eta}(t)-x_{j,\eta}(t)\right\vert$. 
Using Theorem \ref{periodiccauchy lip thm} in the appendix, we can show that there exist a  global solution $(x_{1,\eta},\cdots\!,x_{N,\eta},\xi_{1,\eta},\cdots\!,\xi_{N,\eta})\in C^{1}(\mathbb{R}_{+};\mathbb{T}^{2N}\times \mathbb{R}^{2N})$ to \eqref{regularized system}. Note that 
\begin{align*}
\nabla_{x} V_{\eta}(x)=\nabla_{x}V_{\mathrm{loc}}(x)-2\Psi'(\frac{\left\vert x\right\vert^{2}}{\eta^{2}})\frac{x}{\eta^{2}}V_{\mathbb{R}^{2}}(x)+(1-\psi_{\eta}(x))\nabla_{x} V_{\mathbb{R}^{2}}(x). \end{align*}
Moreover, observe the estimates: 
\begin{align}
\left\vert (1-\psi_{\eta}(x))\nabla_{x} V_{\mathbb{R}^{2}}(x)\right\vert\lesssim \frac{1+\eta}{\eta}, \label{estimate for cutoff}       
\end{align}
and 
\begin{align}
\Big\vert 2\Psi'(\frac{\left\vert x\right\vert^{2}}{\eta^{2}})\frac{x}{\eta^{2}}V_{\mathbb{R}^{2}}(x)\Big\vert
\lesssim -\frac{\log(\eta)}{\eta}\lesssim \frac{1}{\eta^{2}}. \label{est for cutoff 2}      
\end{align}
We proceed by estimating $\left|x_{i,\eta}(t)-x_{i}^{0}\right|$. Integrating in time the equation for $x_{i}$ in \eqref{regularized system} and using Remark \ref{rem about conse of energy},
we obtain 
\begin{align}
\left\vert x_{i,\eta}(t)-x_{i}^{0}\right\vert 
&\leq \int_{0}^{t}\left\vert \xi_{i,\eta}(\tau)\right\vert\ \dd \tau  
\leq \Big(\int_{0}^{t}\left\vert \xi_{i}(\tau)\right\vert^{2}\ \dd\tau\Big)^{\frac{1}{2}}
\Big(\int_{0}^{t} \dd \tau\Big)^{\frac{1}{2}}\notag\\
&\leq \sqrt{T}\Big(2N\int_{0}^{t}\mathcal{F}_{\varepsilon,\eta,N}(\tau)\ \dd \tau\Big)^{\frac{1}{2}}
\leq \sqrt{2N\mathcal{F}_{\varepsilon,N}(0)}\,T, \label{6.4a}    
\end{align}
where we have denoted by $\mathcal{F}_{\varepsilon,\eta,N}(t)$ 
the total energy of the regularized system \eqref{regularized system}, defined with the non-negative kernel $V_\eta+C_0$,
and have used the fact that it is bounded by the initial energy of the original 
system \eqref{trajectories intro} because $V_{\eta}\leq V$. 
Thanks to \eqref{6.4a},
we have the bound 
\begin{align}
\left\vert x_{i,\eta}(t)-x_{j,\eta}(t)\right\vert&\geq \left\vert x_{i}^{0}-x_{j}^{0}\right\vert-\left\vert x_{i}^{0}-x_{i,\eta}(t)\right\vert-\left\vert x_{j}^{0}-x_{j,\eta}(t)\right\vert \notag\\
&\geq \underset{i\neq j}{\min}\left\vert x_{i}^{0}-x_{j}^{0}\right\vert-2\sqrt{2N\mathcal{F}_{\varepsilon,N}(0)}T.  \label{bound on distance from below}\end{align}
Now,  choose $T_{\ast}>0$ and $\eta>0$ as follows:  
\begin{itemize}
\item[{\rm(a)}]$\eta=\frac{1}{2}\underset{i\neq j}{\min}\left\vert x_{i}^{0}-x_{j}^{0}\right\vert;$
\item[{\rm(b)}] $T_{\ast}\leq \frac{\min_{i\neq j}\left\vert x_{i}^{0}-x_{j}^{0}\right\vert }{4\sqrt{2N\max\{1,\mathcal{F}_{\varepsilon,N}(0)\}}}.$  
\end{itemize}
In view of \eqref{bound on distance from below}, it follows that 
\begin{align*}
\min_{i\neq j}\left\vert x_{i,\eta}(t)-x_{j,\eta}(t)\right\vert\geq\eta\qquad \mbox{for all}\ t\in [0,T_{\ast}].      
\end{align*}
Since $\nabla_{x}V_{\eta}(x)\equiv\nabla_{x}V(x)$ on $\left\vert x\right\vert \geq \frac{\eta}{2}$, 
it follows that, for all $t\in [0,T_{\ast}]$,
\begin{align*}
\nabla_{x}V_{\eta}(x_{i,\eta}(t)-x_{j,\eta}(t))=\nabla_{x}V(x_{i,\eta}(t)-x_{j,\eta}(t))    
\end{align*}
which shows that 
$(x_{1,\eta}(t),\cdots\!,x_{N,\eta}(t),\xi_{1,\eta}(t),\cdots\!,\xi_{N,\eta}(t))$ is a solution to \eqref{trajectories intro} on $[0,T_{\ast}]$. On the open set where all pairwise distances exceed $\frac{\eta}{2}$, the vector field is locally Lipschitz; hence this solution is unique on $[0,T_*]$.
\end{proof}

\begin{lem}\label{lemma:6.2}
Let $X_{N}^{0}\in  \Delta_{N}^{c}$ and $\Xi_{N}^{0}\in \mathbb{R}^{2N}$, 
and let $(X_{N}(t),\Xi_{N}(t))\in C^{1}([0,T];\mathbb{T}^{2N}\times \mathbb{R}^{2N})$ 
be a solution to \eqref{trajectories intro} on $[0,T]$. Then
\begin{enumerate}
\item[{\rm(i)}] There exists  some constant  
$\delta=\delta(\varepsilon,N,\mathcal{F}_{\varepsilon,N}(0))>0$ such that 
\begin{align}
\min_{i\neq j}\left\vert x_{i}(t)-x_{j}(t)\right\vert\geq \delta 
\qquad\mbox{\ for all} \ t\in [0,T].  \label{bound from below on separation}    
\end{align}
\item[{\rm(ii)}] There exists 
$H_{0}=H_{0}(\varepsilon,T,\left\vert \Xi_{N}^{0}\right\vert,\delta )$ such that
\begin{align}
\underset{1\leq i\leq N}{\sup}\underset{t\in [0,T]}{\sup}\left\vert \xi_{i}(t)\right\vert\leq H_{0}. \label{H0 est} 
\end{align}
\end{enumerate}
\end{lem}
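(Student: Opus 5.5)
Part (i) follows from conservation of energy. Part (ii) then follows from (i) by a Gr\"onwall-type argument, in which the Lorentz force drops out of the speed equation by skew-symmetry.

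\emph{Part (i).} By Lemma \ref{Conservation of discrete energy}, $\mathcal{F}_{\varepsilon,N}(t)=\mathcal{F}_{\varepsilon,N}(0)$ for all $t\in[0,T]$. The energy uses the shifted kernel $V_{\geq0}=V+C_0\geq 0$, so every summand is nonnegative. Discarding the kinetic part and all pairs but one, we get for each $i\neq j$
\begin{align*}
V(x_i(t)-x_j(t))+C_0\leq 2N^2\varepsilon\,\mathcal{F}_{\varepsilon,N}(0).
\end{align*}
Next we use the decomposition $V=V_{\mathbb R^2}+V_{\mathrm{loc}}$, where $V_{\mathrm{loc}}$ is smooth near $0$. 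This gives $V(x)\geq -\frac{1}{2\pi}\log|x|-C_1$ for $|x|\leq 1/4$; for $|x|\geq 1/4$ there is nothing to prove. Hence
\begin{align*}
|x_i(t)-x_j(t)|\geq \delta:=\min\Big\{\tfrac14,\ \exp\big(-2\pi(2N^2\varepsilon\mathcal{F}_{\varepsilon,N}(0)+C_1)\big)\Big\},
\end{align*}
which depends only on $\varepsilon$, $N$ and $\mathcal{F}_{\varepsilon,N}(0)$. The one subtle point is that the torus distance must be used, since $V$ is singular only at lattice points. The decomposition handles this, because $|x|$ denotes the distance to $\mathbb Z^2$.

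\emph{Part (ii).} Take the inner product of the velocity equation with $\xi_i$. The magnetic term vanishes pointwise because $\xi_i\cdot\xi_i^\perp=0$; this is the same skew-symmetry used throughout the paper. Therefore
\begin{align*}
\frac{\dd}{\dd t}|\xi_i|\leq \frac{1}{N\varepsilon}\sum_{j\neq i}|\nabla_xV(x_i-x_j)|\leq \frac{1}{\varepsilon}\sup_{|x|\geq\delta}|\nabla_x V(x)|\lesssim \frac{1}{\varepsilon}\Big(\frac1\delta+\|\nabla V_{\mathrm{loc}}\|_{L^\infty(|x|\leq 1/2)}\Big),
\end{align*}
where the middle step uses part (i). Integrating in time gives
\begin{align*}
|\xi_i(t)|\leq|\Xi_N^0|+\frac{CT}{\varepsilon\delta}=:H_0.
\end{align*}
(Alternatively, the energy bound gives $|\xi_i|^2\leq 2N\mathcal{F}_{\varepsilon,N}(0)$ directly; the Gr\"onwall form matches the stated dependence of $H_0$.)

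The main obstacle is part (i): obtaining a lower bound on $V$ near the diagonal that is uniform on the torus and correctly accounts for periodic images. Part (ii) is then routine once the magnetic term is seen to drop out.
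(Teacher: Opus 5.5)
Your proposal is correct and follows the paper's argument. Part (i) rests on energy conservation together with the logarithmic singularity of $V$ at the origin, and part (ii) on testing the velocity equation against $\xi_i$ (which kills the Lorentz term) and then bounding $|\nabla_x V|\lesssim 1/\delta+\|\nabla_x V_{\mathrm{loc}}\|_\infty$ via part (i).

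The only difference is how you isolate a single pair in (i). You use the nonnegativity of $V+C_0$ built into $\mathcal{F}_{\varepsilon,N}$. The paper instead splits $V=V_{\mathbb R^2}+V_{\mathrm{loc}}$ over all pairs and uses $V_{\mathbb R^2}\geq 0$ on $|x|\leq 1$, which adds an extra $N(N-1)\|V_{\mathrm{loc}}\|_\infty$ to the exponent. Your $\delta$ is therefore slightly better, though only its positivity matters downstream.
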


\begin{proof} We divide the proof into two steps.

\smallskip
\textbf{1}. Given indices $i_{0},j_{0}$ and $t\in [0,T]$, 
we may assume for simplicity that $\left\vert x_{i_{0}}(t)-x_{j_{0}}(t)\right\vert \leq \frac{1}{4}$. 
We split the interaction energy as follows:  
\begin{align*}
\frac{1}{2N^{2}\varepsilon}\sum_{i\neq j}V(x_{i}(t)-x_{j}(t))
=\frac{1}{2N^{2}\varepsilon}\sum_{i\neq j}V_{\mathbb{R}^{2}}(x_{i}(t)-x_{j}(t))
+\frac{1}{2N^{2}\varepsilon}\sum_{i\neq j}V_{\mathrm{loc}}(x_{i}(t)-x_{j}(t)).    
\end{align*}
Therefore, using the conservation of energy (Lemma \ref{Conservation of discrete energy}), 
we obtain 
\begin{align*}
&\frac{1}{2N^{2}\varepsilon}\sum_{i\neq j}V_{\mathbb{R}^{2}}(x_{i}(t)-x_{j}(t))+\frac{1}{2N^{2}\varepsilon}\sum_{i\neq j}V_{\mathrm{loc}}(x_{i}(t)-x_{j}(t))
\leq \mathcal{F}_{\varepsilon,N}(0),     
\end{align*}
and hence  
\begin{align*}
\frac{1}{2N^{2}\varepsilon}\sum_{i\neq j} V_{\mathbb{R}^{2}}(x_{i}(t)-x_{j}(t))\leq \mathcal{F}_{\varepsilon,N}(0)+\frac{N-1}{N\varepsilon}\left\Vert V_{\mathrm{loc}}\right\Vert_{\infty}.  
\end{align*}
It follows that 
\begin{align}
&\frac{1}{2N^{2}\varepsilon}\underset{\left\vert x_{i}(t)-x_{j}(t)\right\vert\leq 1}{\sum_{i\neq j}}V_{\mathbb{R}^{2}}(x_{i}(t)-x_{j}(t))\notag\\
&\leq \mathcal{F}_{\varepsilon,N}(0)+\frac{N-1}{N\varepsilon}\left\Vert V_{\mathrm{loc}}\right\Vert_{\infty} 
- \frac{1}{2N^{2}\varepsilon}\underset{\left\vert x_{i}(t)-x_{j}(t)\right\vert>1 }{\sum_{i\neq j}}V_{\mathbb{R}^{2}}(x_{i}(t)-x_{j}(t)). \label{est on sum near 0}   
\end{align}
For centered representatives, $\left\vert x_{i}(t)-x_{j}(t)\right\vert\leq \frac{\sqrt{2}}{2}<1$; hence
\begin{align}
\bigg\vert \frac{1}{2N^{2}\varepsilon}\underset{\left\vert x_{i}(t)-x_{j}(t)\right\vert>1 }{\sum_{i\neq j}}V_{\mathbb{R}^{2}}(x_{i}(t)-x_{j}(t))\bigg\vert =0.\label{estimate of sum far from 0}
\end{align}
Putting \eqref{estimate of sum far from 0} inside \eqref{est on sum near 0}, we conclude 
\begin{align*}
-\frac{1}{4\pi N^{2}\varepsilon}\log(\left\vert x_{i_{0}}(t)-x_{j_{0}}(t)\right\vert)\leq \mathcal{F}_{\varepsilon,N}(0)+\frac{N-1}{N\varepsilon}\left\Vert V_{\mathrm{loc}}\right\Vert_{\infty}.       
\end{align*}
Thus, the inequality \eqref{bound from below on separation} follows by taking 
\begin{align*}
\delta=\min\big\{\frac14,
\exp\big(-4\pi N^{2}\varepsilon(\mathcal{F}_{\varepsilon,N}(0)+\frac{N-1}{N\varepsilon}\Vert V_{\mathrm{loc}}
\Vert_{L^{\infty}([-\frac12,\frac12]^2)})\big)\big\}.
\end{align*}

\smallskip
2. Multiplying the second equation in \eqref{trajectories intro} by $\xi_{i}$ yields
\begin{align*}
\frac{\dd}{\dd t}\frac{1}{2}\left\vert \xi_{i}(t)\right\vert^{2} &=-\frac{1}{N\varepsilon}\sum_{j:j\neq i} \nabla_{x}V(x_{i}(t)-x_{j}(t))\cdot \xi_{i}(t)\leq \frac{\left\vert \xi_{i}(t)\right\vert }{N\varepsilon}\sum_{j:j\neq i}(\frac{1}{\delta}+\left\Vert \nabla_{x}V_{\mathrm{loc}}\right\Vert_{\infty})\\
&\leq \frac{1}{\varepsilon}(\frac{1}{\delta}+\left\Vert \nabla_{x}V_{\mathrm{loc}}\right\Vert_{\infty})\left\vert \xi_{i}(t)\right\vert=C\left\vert \xi_{i}(t)\right\vert.    
\end{align*}
Hence, the above inequality yields
\begin{align*}
\underset{t\in [0,T]}{\sup}\underset{1\leq i\leq N}{\sup}\left\vert\xi_{i}(t) \right\vert \leq \max_i|\xi_i^0|+CT,
\end{align*}
and so the claim follows by taking $H_{0}=\max_i|\xi_i^0|+CT$. 
\end{proof}

We can now conclude the global well-posedness by a blow-up alternative argument. 

\begin{thm}
Suppose that $X_{N}^{0}=(x_{1}^{N,0},\cdots\!,x_{N}^{N,0})\in \Delta_{N}^{c}$,
$\Xi_{N}^{0}=(\xi_{1}^{N,0},\cdots\!,\xi_{N}^{N,0})\in \mathbb{R}^{2N}$, 
and $B\in C(\mathbb{R}_{+};W^{1,\infty}(\mathbb{T}^{2}))$. 
Then there exists a unique global solution $(X_{N}(t),\Xi_{N}(t))\in C^{1}(\mathbb{R}_{+}; \mathbb{T}^
{2N}\times \mathbb{R}^{2N})$ to \eqref{trajectories intro}.    
\end{thm}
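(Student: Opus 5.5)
Combine the local result, Proposition \ref{Short time well posed}, with the a priori bounds of Lemma \ref{lemma:6.2} through a standard continuation (blow-up alternative) argument. Let $T_{\max}\in(0,\infty]$ be the supremum of times $T$ for which \eqref{trajectories intro} has a solution $(X_N,\Xi_N)\in C^1([0,T];\mathbb{T}^{2N}\times\mathbb{R}^{2N})$ with $X_N(t)\in\Delta_N^c$ for all $t\in[0,T]$. Proposition \ref{Short time well posed} gives $T_{\max}\geq T_\ast>0$.

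\textbf{Uniqueness.} Two solutions on a common interval agree. On any compact subinterval, the configurations stay in a compact subset of $\Delta_N^c$ by continuity, and the velocities stay bounded. On such a set the vector field $(X,\Xi)\mapsto(\Xi,-B(t,x_i)\xi_i^\perp-\frac{1}{N\varepsilon}\sum_{j\neq i}\nabla V(x_i-x_j))$ is Lipschitz in $(X,\Xi)$ uniformly in $t$, because $B(t,\cdot)\in W^{1,\infty}$ continuously in $t$ and $\nabla V$ is smooth away from $0$. Gr\"onwall's inequality then yields uniqueness. Gluing local solutions therefore produces a unique maximal solution on $[0,T_{\max})$.

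\textbf{Continuation.} Suppose, for contradiction, that $T_{\max}<\infty$, and fix $T=T_{\max}$. I would like to apply Lemma \ref{lemma:6.2} on every $[0,T']$ with $T'<T_{\max}$, and this requires checking that its constants are uniform in $T'$.
\begin{itemize}
\item The separation bound $\delta$ depends only on $\varepsilon$, $N$ and $\mathcal{F}_{\varepsilon,N}(0)$, since energy is conserved by Lemma \ref{Conservation of discrete energy}.
\item The velocity bound satisfies $H_0\leq\max_i|\xi_i^0|+CT_{\max}$.
\end{itemize}
Hence on $[0,T_{\max})$ we have $\min_{i\neq j}|x_i-x_j|\geq\delta$ and $|\xi_i|\leq H_0$. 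The right-hand side of the ODE is then bounded by a constant $M$ depending on $\delta$, $H_0$, $\|B\|_\infty$ and $\|\nabla V_{\mathrm{loc}}\|_\infty$. So $(X_N,\Xi_N)$ is $M$-Lipschitz in time, and the limit $(X_N,\Xi_N)(T_{\max}^-)$ exists. This limit still satisfies $\min_{i\neq j}|x_i-x_j|\geq\delta$, so $X_N(T_{\max})\in\Delta_N^c$.

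Now restart Proposition \ref{Short time well posed} from time $T_{\max}$ with these data. The proposition is stated at $t=0$, but its proof is unchanged for a time-shifted $B(\cdot+T_{\max})$, which is still in $C(\mathbb{R}_+;W^{1,\infty})$. This extends the solution beyond $T_{\max}$, a contradiction. Hence $T_{\max}=\infty$, and the resulting $C^1$ solution on $\mathbb{R}_+$ is global and unique.

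\textbf{Main obstacle.} The real content is the $T$-uniformity of the separation bound. This holds because $\delta$ in Lemma \ref{lemma:6.2}(i) depends only on the conserved energy and not on $T$. One subtle point is that the local existence time from Proposition \ref{Short time well posed} depends on the minimal initial separation and on the energy. At restart, both are controlled: the separation by $\delta$ and the energy by conservation. So the extension length is in fact uniform, which yields an alternative argument by stepping forward in intervals of fixed length. I would present the argument this way to avoid the limit step altogether.
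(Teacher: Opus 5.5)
Your proposal is correct and follows essentially the paper's argument. The separation bound $\delta$ from Lemma \ref{lemma:6.2} depends only on $\varepsilon$, $N$ and the conserved energy, so Proposition \ref{Short time well posed} can be restarted with a fixed step $T_\ast$ from the time-shifted data; this is exactly the paper's induction, which you also say you would present. Your uniqueness argument (a Lipschitz bound on separated configurations with bounded velocities, followed by Gr\"onwall) is the same as the paper's comparison using the truncated kernel $V_\eta$ with $\eta=d_{\min}$ and the velocity bound $H_0$.
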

\begin{proof}  We divide into two steps.

\smallskip
\textbf{1. Existence}.
By 
Theorem \ref{Short time well posed}, there exist some $T_{\ast}>0$ and a solution $(X_{N},\Xi_{N})\in C^{1}([0,T_{\ast}];\mathbb{T}^{2N}\times \mathbb{R}^{2N})$ to \eqref{trajectories intro}. We proceed by induction. Suppose that we have a solution $(X_{N},\Xi_{N})\in C^{1}([0,nT_{\ast}];\mathbb{T}^{2N}\times \mathbb{R}^{2N})$ for \eqref{trajectories intro}. Consider the  Cauchy problem 
\begin{align}
\begin{cases}
\dot y_{i}(t)=\lambda_{i}(t),\\[1mm]
\dot \lambda_{i}(t)=-B(t+nT_{\ast},y_{i}(t))\lambda_{i}^{\perp}(t)-\frac{1}{N\varepsilon}\underset{j:j\neq i}{\sum}\nabla_{x}V(y_{i}(t)-y_{j}(t)),\\
\ y_{i}(0)=x_{i}(nT_{\ast}),\quad \lambda_{i}(0)=\xi_{i}(nT_{\ast}). 
\end{cases} \label{iterative step}   
\end{align}
By Lemma \ref{lemma:6.2}, we have
\begin{align*}
\min_{i\neq j}\left\vert y_{i}(0)-y_{j}(0)\right\vert=\min_{i\neq j}\left\vert x_{i}(nT_{\ast})-x_{j}(nT_{\ast})\right\vert\geq \delta(\varepsilon,N,\mathcal{F}_{\varepsilon,N}(0)).      
\end{align*}
Therefore, by Proposition \ref{Short time well posed}, 
it follows that there exists a solution 
$(Y_{N},\Lambda_{N})\in C^{1}([0,T_{\ast}];\mathbb{T}^{2N}\times \mathbb{R}^{2N})$ 
to \eqref{iterative step} 
(note that we can take the same $T_{\ast}$ because it depends only on the initial separation $\delta$ and the initial energy $\mathcal{F}_{\varepsilon,N}(0)$). Set 
\begin{align*}
Z_{N}(t):=\begin{cases}
X_{N}(t) &\quad \mbox{for $t\in [0,nT_{\ast}]$},\\
Y_{N}(t-nT_{\ast})&\quad \mbox{for $t\in [nT_{\ast},(n+1)T_{\ast}]$},
\end{cases}
\end{align*}
and 
\begin{align*}
\Pi_{N}(t):=\begin{cases}
\Xi_{N}(t)&\quad\mbox{for $t\in [0,nT_{\ast}]$},\\
\Lambda_{N}(t-nT_{\ast}) &\quad\mbox{for $t\in [nT_{\ast},(n+1)T_{\ast}]$}. 
\end{cases}    
\end{align*}
Then it is readily checked that 
\begin{itemize}
\item[{\rm(a)}] $(Z_{N},\Pi_{N})\in C^{1}([0,(n+1)T_{\ast}];\mathbb{T}^{2N}\times \mathbb{R}^{2N});$
\smallskip
\item[{\rm(b)}] $(Z_{N},\Pi_{N})$ is a solution to \eqref{trajectories intro} on $[0,(n+1)T_{\ast}]$.
\end{itemize}
This concludes the induction step, and hence it follows that, for every $n\in \mathbb{N}$, 
there exists a solution on $[0,nT_{\ast}]$ to \eqref{trajectories intro}. 

\smallskip
\textbf{2. Uniqueness}. Let $(X_{N},\Xi_{N})\in C^{1}([0,T];\mathbb{T}^{2N}\times \mathbb{R}^{2N})$ and $(Y_{N},\Lambda_{N})\in C^{1}([0,T];\mathbb{T}^{2N}\times \mathbb{R}^{2N})$ be two solutions of \eqref{trajectories intro} with the same initial data $(X_{N}^{0},\Xi_{N}^{0})$. Denote 
\begin{align*}
d_{\min}=\min\Big\{\underset{t\in [0,T]}{\min}\underset{i\neq j}{\min}\left\vert x_{i}(t)-x_{j}(t)\right\vert,\underset{t\in[0,T]}{\min}\min_{i\neq j} \left\vert y_{i}(t)-y_{j}(t)\right\vert\Big\}.     
\end{align*}
Note that $(X_{N},\Xi_{N})$ and $(Y_{N},\Lambda_{N})$ satisfy the truncated equation \eqref{regularized system} with $\eta=d_{\min}$. 
We write system \eqref{regularized system} concisely as 
\begin{align*}
\frac{\dd}{\dd t}(X_{N}(t),\,\Xi_{N}(t))
=(F_{1}(t,X_{N}(t),\Xi_{N}(t)),\,F_{2}(t,X_{N}(t),\Xi_{N}(t)))  \end{align*}
where $F_{1}:\mathbb{R}_{+}\times \mathbb{T}^{2N}\times \mathbb{R}^{2N}\rightarrow \mathbb{R}^{2N}$ and $F_{2}:\mathbb{R}_{+}\times \mathbb{T}^{2N}\times \mathbb{R}^{2N}\rightarrow \mathbb{R}
^{2N}$ are the vector fields defined by 
\begin{align*}
F_{1}(t,X_{N},\Xi_{N}):= (\xi_{1},\cdots\!,\xi_{N}),    
\end{align*}
and 
\begin{align*}
F_{2}(t,X_{N},\Xi_{N}):=\Big(-\xi_{i}^{\perp}B(t,x_{i})-\frac{1}{N\varepsilon}\sum_{j:j\neq i}\nabla_{x}V_{\eta}(x_{i}-x_{j})\Big)_{i=1}^{N}.
\end{align*}
We have 
\begin{align*}
&\frac{\dd}{\dd t}(\left\vert X_{N}(t)-Y_{N}(t)\right\vert^{2}+\left\vert \Xi_{N}(t)-\Lambda_{N}(t)\right\vert^{2})\\
&=2(X_{N}(t)-Y_{N}(t))\cdot (F_{1}(t,X_{N}(t),\Xi_{N}(t))-F_{1}(t,Y_{N}(t),\Lambda_{N}(t)))\\
&\quad+2(\Xi_{N}(t)-\Lambda_{N}(t))\cdot (F_{2}(t,X_{N}(t),\Xi_{N}(t))-F_{2}(t,Y_{N}(t),\Lambda_{N}(t))):= \mathcal{T}_{1}+\mathcal{T}_{2}.    \end{align*}
For brevity, we omit the time variable in what follows. We estimate $\mathcal{T}_{1}$:\begin{align}
\mathcal{T}_{1}\leq 2\left\vert X_{N}-Y_{N}\right\vert\left\vert \Xi_{N}-\Lambda_{N}\right\vert\leq \left\vert X_{N}-Y_{N}\right\vert^{2}+\left\vert \Xi_{N}-\Lambda_{N}\right\vert^{2}.  \label{T1 estimate}        
\end{align}
To estimate $\mathcal{T}_{2}$,  start by noting that 
\begin{align*}
|\mathcal{T}_{2}|&\leq 2\left\vert \Xi_{N}-\Lambda_{N}\right\vert \left\vert F_{2}(X_{N},\Xi_{N})-F_{2}(Y_{N},\Lambda_{N})\right\vert\\
&\leq 2\left\vert \Xi_{N}-\Lambda_{N}\right\vert \left\vert F_{2}(X_{N},\Xi_{N})-F_{2}(X_{N},\Lambda_{N})\right\vert+2\left\vert \Xi_{N}-\Lambda_{N}\right\vert \left\vert F_{2}(X_{N},\Lambda_{N})-F_{2}(Y_{N},\Lambda_{N})\right\vert\\
&:=\mathcal{T}_{2}^{1}+\mathcal{T}_{2}^{2}. 
\end{align*}
We have 
\begin{align*}
\left\vert F_{2}(X_{N},\Xi_{N})-F_{2}(X_{N},\Lambda_{N})\right\vert
\leq \left\Vert B\right\Vert_{\infty}\big\vert (\xi_{1}^{\perp},\cdots\!,\xi_{N}^{\perp})-(\lambda_{1}^{\perp},\cdots\!,\lambda_{N}^{\perp})\big\vert
=\left\Vert B\right\Vert_{\infty}\left\vert \Xi_{N}-\Lambda_{N}\right\vert,          
\end{align*}
so that
\begin{align}
\mathcal{T}_{2}^{1}\leq  2\left\Vert B\right\Vert_{\infty} \left\vert \Xi_{N}-\Lambda_{N}\right\vert^{2}.\label{est on T12}     
\end{align}
We introduce the notation 
\begin{itemize}
  \item[{\rm(a)}]$L=L(d_{\min})=\mathrm{Lip}(\nabla_{x}V_{\eta})$.
   \item[{\rm(b)}] $H_{0}$ as in \eqref{H0 est}. 
\end{itemize}
With this notation, we estimate 
\begin{align*}
&\left\vert F_{2}(X_{N},\Lambda_{N})-F_{2}(Y_{N},\Lambda_{N})\right\vert\\
&\leq H_{0}\left\Vert \nabla_xB\right\Vert_{L^{\infty}([0,T]\times\mathbb T^2)}\left\vert X_{N}-Y_{N}\right\vert
+\frac{L}{N\varepsilon}
\bigg(\sum_{i=1}^{N}\Big(\sum_{j\ne i}
\bigl(|x_i-y_i|+|x_j-y_j|\bigr)\Big)^2\bigg)^{1/2}\\
&\leq \big(H_{0}\left\Vert \nabla_xB\right\Vert_{L^{\infty}([0,T]\times\mathbb T^2)}+\frac{2L}{\varepsilon}\big)\left\vert X_{N}-Y_{N}\right\vert.
\end{align*}
It follows that 
\begin{align}
\mathcal{T}^{2}_{2}\leq \big(H_{0}\left\Vert B\right\Vert_{L^{\infty}_{t}W^{1,\infty}_{x}} +\frac{2L}{\varepsilon}\big)\big(\left\vert X_{N}-Y_{N}\right\vert^{2}+\left\vert \Xi_{N}-\Lambda_{N}\right\vert^{2}\big). \label{est on T22} 
\end{align}
Gathering \eqref{est on T12}--\eqref{est on T22}, we have
\begin{align}
\mathcal{T}_{2}
\leq \big(H_{0}\left\Vert B\right\Vert_{L^{\infty}_{t}W^{1,\infty}_{x}}+\frac{2L}{\varepsilon}+2\left\Vert B\right\Vert_{\infty}\big)\big(\left\vert X_{N}-Y_{N}\right\vert^{2}+\left\vert \Xi_{N}-\Lambda_{N}\right\vert^{2}\big). \label{T2 estimate}  
\end{align}
To conclude, combining \eqref{T1 estimate}--\eqref{T2 estimate}  yields the following estimate 
for some constant $C>0$: 
\begin{align*}
\frac{\dd}{\dd t}(\left\vert X_{N}(t)-Y_{N}(t)\right\vert^{2}+\left\vert \Xi_{N}(t)-\Lambda_{N}(t)\right\vert^{2})\leq C(\left\vert X_{N}(t)-Y_{N}(t)\right\vert^{2} +\left\vert \Xi_{N}(t)-\Lambda_{N}(t)\right\vert^{2} ).     
\end{align*}
Then the uniqueness follows directly by Gr\"onwall's lemma. 
\end{proof}

\appendix
\section{Global Well-Posedness of System \eqref{regularized system}}
In this appendix we elaborate on the global well-posedness of the system of trajectories \eqref{regularized system}, as it falls slightly outside the scope of the classical Cauchy-Lipschitz theorem.  
\begin{thm}\label{periodiccauchy lip thm}
Suppose that $(X_{N}^{0},\Xi_{N}^{0})\in \mathbb{T}^{2N}\times \mathbb{R}^{2N}$, 
$V\in W^{2,\infty}(\mathbb{R}^{2})$, and $B\in C(\mathbb{R}_{+};W^{1,\infty}(\mathbb{R}^{2})),$ 
where $V$ and $B(t,\cdot)$ are $\mathbb{Z}^{2}$-periodic.
\begin{enumerate}
\item[{\rm(i)}] There exist a unique solution $(X_{N}(t),\Xi_{N}(t))\in C^{1}(\mathbb{R}_{+};\mathbb{R}^{2N}\times \mathbb{R}^{2N})$ of the Cauchy problem{\rm :} 
\begin{align}
\begin{cases}
\dot x_{i}(t)=\xi_{i}(t),\\
\dot \xi_{i}(t)=-B(t,x_{i}(t))\xi_{i}^{\perp}(t)-\frac{1}{N\varepsilon}\underset{j:j\neq i}{\sum}\nabla_{x} V(x_{i}(t)-x_{j}(t)),\\
x_{i}(0)=x_{i}^{0}, \quad \xi_{i}(0)=\xi_{i}^{0}. 
\end{cases}
\label{ode system appendix}
\end{align}
\item[{\rm(ii)}] If $(X_{N}(t),\Xi_{N}(t))$ is the unique global solution to \eqref{ode system appendix}, define $(\widetilde{X}_{N}(t),\widetilde{\Xi}_{N}(t)):= (X_{N}(t)\mod 1,\Xi_{N}(t))$. Moreover, let $(\widetilde{B},\widetilde{V})$ be the restriction of $(B,V)$ 
to $\mathbb{T}^{2}$, respectively.  
Then $(\widetilde{X}_{N}(t),\widetilde{\Xi}_{N}(t))\in C^{1}(\mathbb{R}_{+};\mathbb{T}^{2N}\times \mathbb{R}^{2N})$  is the unique solution to the Cauchy problem{\rm:} 
\begin{align}
\begin{cases}
\dot x_{i}(t)=\xi_{i}(t),\\
\dot \xi_{i}(t)=-\widetilde{B}(t,x_{i}(t))\xi_{i}^{\perp}(t)-\frac{1}{N\varepsilon}\underset{j:j\neq i}{\sum}\nabla_{x} \widetilde{V}(x_{i}(t)-x_{j}(t)), \\
\ x_{i}(0)=x_{i}^{0},\quad \xi_{i}(0)=\xi_{i}^{0}. 
\end{cases}
\label{ode system appendix torus}
\end{align}
\end{enumerate}
\end{thm}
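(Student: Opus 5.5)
The plan is to treat \eqref{ode system appendix} as an ODE on $\mathbb{R}^{4N}$ with right-hand side $G(t,X,\Xi)=(F_{1},F_{2})$, and to obtain a global solution from a \emph{linear growth} bound rather than a global Lipschitz bound. The issue is the magnetic term $B(t,x_{i})\xi_{i}^{\perp}$: it is Lipschitz in $\Xi$ with constant $\Vert B\Vert_{\infty}$, but its Lipschitz constant in $X$ is $\Vert\nabla_{x}B\Vert_{\infty}|\xi_{i}|$, which grows with $|\Xi|$. So $G$ is continuous in $t$ and only locally Lipschitz in $(X,\Xi)$, uniformly for $t$ in compact intervals. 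This holds because $B\in C(\mathbb{R}_{+};W^{1,\infty})$ and because $\nabla_{x}V\in W^{1,\infty}$ is globally Lipschitz with constant $\Vert V\Vert_{W^{2,\infty}}$.

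\textbf{Part (i).} First, the local Cauchy--Lipschitz (Picard) theorem gives a unique maximal $C^{1}$ solution on some interval $[0,T_{\max})$. Second, I rule out finite-time blow-up with an a priori bound. Since $\xi_{i}\cdot\xi_{i}^{\perp}=0$, multiplying the $\xi_{i}$-equation by $\xi_{i}$ gives
\begin{align*}
\frac{\dd}{\dd t}\frac12|\xi_{i}|^{2}=-\frac{1}{N\varepsilon}\sum_{j\neq i}\nabla_{x}V(x_{i}-x_{j})\cdot\xi_{i}\leq\frac{\Vert\nabla_{x}V\Vert_{\infty}}{\varepsilon}|\xi_{i}|.
\end{align*}
Hence $|\xi_{i}(t)|\leq|\xi_{i}^{0}|+t\Vert\nabla_{x}V\Vert_{\infty}/\varepsilon$, and then $|x_{i}(t)|\leq|x_{i}^{0}|+\int_{0}^{t}|\xi_{i}|$. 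Both bounds stay finite on bounded time intervals. By the blow-up alternative (a maximal solution with $T_{\max}<\infty$ must leave every compact set), $T_{\max}=\infty$. Uniqueness on any $[0,T]$ then follows by the Gr\"onwall argument already used in the uniqueness step above. On $[0,T]$ both solutions satisfy $|\Xi|\leq H_{0}$, so the $X$-Lipschitz constant of the magnetic term is at most $H_{0}\Vert\nabla_{x}B\Vert_{\infty}$, and the interaction term is Lipschitz with constant $2\Vert V\Vert_{W^{2,\infty}}/\varepsilon$.

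\textbf{Part (ii).} Let $\pi:\mathbb{R}^{2}\to\mathbb{T}^{2}$ be the quotient map. Periodicity gives $\widetilde{B}(t,\pi x)=B(t,x)$ and $\nabla_{x}\widetilde{V}(\pi x-\pi y)=\nabla_{x}V(x-y)$. The map $\pi$ is a local isometry, so $\widetilde{X}_{N}$ is $C^{1}$ with $\dot{\widetilde{x}}_{i}=\xi_{i}$, and substituting into \eqref{ode system appendix} shows that $(\widetilde{X}_{N},\widetilde{\Xi}_{N})$ solves \eqref{ode system appendix torus}.

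For uniqueness in (ii), take another torus solution $(Y,\Lambda)$. I would lift $Y$ to a continuous path $\widehat{Y}$ in $\mathbb{R}^{2N}$ with $\widehat{Y}(0)=X_{N}^{0}$, using path lifting for the covering $\mathbb{R}^{2}\to\mathbb{T}^{2}$. The lift is $C^{1}$ with $\dot{\widehat{y}}_{i}=\lambda_{i}$, and by periodicity $(\widehat{Y},\Lambda)$ solves \eqref{ode system appendix}. Uniqueness in part (i) then forces $\widehat{Y}=X_{N}$, so $Y=\widetilde{X}_{N}$.

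The main obstacle is the absence of a global Lipschitz constant caused by the velocity-dependent magnetic term. The key observation that resolves it is that the Lorentz force does no work, which yields the linear-in-time bound on $|\xi_{i}|$ and hence global existence.
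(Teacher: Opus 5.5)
Your proof is correct. It rests on the same key observation as the paper's: since $\xi_i\cdot\xi_i^{\perp}=0$, the Lorentz term does no work, which gives $|\xi_i(t)|\leq|\xi_i^{0}|+t\Vert\nabla_x V\Vert_\infty/\varepsilon$.

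The difference is in how you turn this bound into global existence. The paper never uses a maximal solution. It replaces $B(t,x_i)\xi_i^\perp$ by $B(t,x_i)\chi_R(\xi_i)\xi_i^\perp$ with a compactly supported cutoff $\chi_R$, which makes the field globally Lipschitz, so the global Cauchy--Lipschitz theorem applies directly. The no-work bound survives the cutoff, since $\xi_i\cdot\chi_R(\xi_i)\xi_i^\perp=0$. It therefore gives $|\xi_i|\leq R_0$ on $[0,T]$, and for $R>R_0$ the truncated solution solves \eqref{ode system appendix}.

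You instead use local Picard together with the blow-up alternative. The two mechanisms are interchangeable. Yours gives uniqueness for the untruncated system for free, since local Lipschitz continuity already suffices, so your extra Gr\"onwall step is not needed. With the truncation, one must add the remark that any solution of \eqref{ode system appendix} obeys the same velocity bound and hence solves the truncated system; the paper leaves this implicit.

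In part (ii), the paper only checks that $(\widetilde{X}_N,\widetilde{\Xi}_N)$ solves \eqref{ode system appendix torus}, using a locally constant integer shift $\mathbf{k}$ and periodicity. It never argues the asserted uniqueness. Your path-lifting argument supplies exactly that missing step: lift any torus solution to $\mathbb{R}^{2N}$, note by periodicity that the lift solves \eqref{ode system appendix}, and invoke part (i). This works provided the lift starts at the same representative of $X_N^0$ used in part (i).
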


\begin{proof} We divide the proof into two steps.

\smallskip
\textbf{1.} The reason we cannot apply the Cauchy-Lipschitz theorem  directly is because the product $B(t,x_{i})\xi_{i}^{\perp}$ is only locally Lipschitz. To circumvent this obstacle, fix a smooth truncation $\chi_{R}$ such that  
\begin{itemize}
 \item[{\rm(a)}] $\chi_{R}\in C^{\infty}_{0}(\mathbb{R}^{2})$; 
\item[{\rm(b)}] $\chi_{R}\equiv 1$ on $B_{R}(0)$ for some $R>0$ to be defined in the sequel. 
\end{itemize}
Consider the following truncated system: 
\begin{align}\tag{N$R$}
\begin{cases}
\dot x_{i}(t)=\xi_{i}(t),\\
\dot \xi_{i}(t)=-B(t,x_{i}(t))\chi_{R}(\xi_{i}(t))\xi_{i}^{\perp}(t)-\frac{1}{N\varepsilon}\underset{j:j\neq i}{\sum}\nabla_{x} V(x_{i}(t)-x_{j}(t)), \\
x_{i}(0)=x_{i}^{0},\quad \xi_{i}(0)=\xi_{i}^{0}. 
\end{cases}
\label{ode system appendix TRUNCATED}
\end{align}
By the Cauchy-Lipschitz theorem, there exists 
a unique global solution $(X_{N,R}(t),\Xi_{N,R}(t))\in C^{1}(\mathbb{R}_{+};\mathbb{R}^{2N}\times \mathbb{R}^{2N})$ to \eqref{ode system appendix TRUNCATED}.  
Next, we obtain a bound on the velocities. 
Multiplying the second equation in \eqref{ode system appendix TRUNCATED} by $\xi_{i}$, we obtain 
\begin{align*}
\frac{\dd}{\dd t}\frac{1}{2}\left\vert \xi_{i}\right\vert^{2}(t)=- \frac{1}{N\varepsilon} \sum_{j:j\neq i}\nabla_{x}V(x_{i}-x_{j})\cdot \xi_{i}(t)\leq \frac{1}{\varepsilon}\left\Vert \nabla_{x}V\right\Vert_{\infty}\left\vert \xi_{i}(t)\right\vert.     
\end{align*}
It follows that 
\begin{align*}
\frac{\dd}{\dd t}\left\vert \xi_{i}(t)\right\vert\leq \frac{2\left\Vert \nabla_{x}V\right\Vert_{\infty}}{\varepsilon}\,\Longrightarrow\, 
\underset{1\leq i\leq N}{\sup}\underset{t\in [0,T]}{\sup}\left\vert \xi_{i}(t)\right\vert
\leq\max_i|\xi_i(0)| +\frac{\left\Vert \nabla_{x}V\right\Vert_{\infty}T} {\varepsilon}:= R_{0}.
\end{align*}
Therefore, choosing $R>R_{0}$, it follows that $\chi_{R}(\xi_{i}(t))=1$ for all $t\in [0,T]$ and all $1\leq i\leq N$. 
Thus, $(X_{N,R}(t),\Xi_{N,R}(t))$ is a solution of the original system \eqref{ode system appendix} on $[0,T]$. 

\smallskip
\textbf{2.} Define  
$(\widetilde{X}_{N}(t),\widetilde{\Xi}_{N}(t))
:=  (X_{N}(t) \mod 1,\Xi_{N}(t))\in C^{1}(\mathbb{R}_{+};\mathbb{T}^{2N}\times \mathbb{R}^{2N})$.  
Given $t_{0}\in \mathbb{R}_{+}$, 
there are both an interval $\mathcal{Q}$ containing $t_{0}$ 
and a multi-integer $\mathbf{k}\in \mathbb{Z}^{2N}$ 
such that $\widetilde{X}_{N}(t)=X_{N}(t)+\mathbf{k}$ for all $t\in \mathcal{Q}$. 
To show that $(\widetilde{X}_{N}(t),\widetilde{\Xi}_{N}(t))$ is the asserted 
solution, we observe that, for all $t\in \mathcal{Q}$,
\begin{align*}
\frac{\dd}{\dd t}\widetilde{x}_{i}(t)=\frac{\dd}{\dd t}(x_{i}(t)+\mathbf{k}_{i})=\frac{\dd}{\dd t}x_{i}(t)=\xi_{i}(t)=\widetilde{\xi}_{i}(t).    
\end{align*}
Moreover, since $B$ and $V$ are $\mathbb{Z}^{2}$-periodic, we see that, for all $t\in \mathcal{Q}$,
\begin{align*}
\frac{\dd}{\dd t}\xi_{i}(t)&=-B(t,x_{i}(t))\xi_{i}^{\perp}(t)-\frac{1}{N\varepsilon}\sum_{j:j\neq i}\nabla_{x}V(x_{i}(t)-x_{j}(t))\\
&=-B(t,x_{i}(t)+\mathbf{k}_{i})\xi_{i}^{\perp}(t)-\frac{1}{N\varepsilon}\sum_{j:j\neq i}\nabla_{x}V(x_{i}(t)-x_{j}(t)+\mathbf{k}_{i}-\mathbf{k}_{j})\\
&=-B(t,\widetilde{x}_{i}(t))\xi_{i}^{\perp}(t)-\frac{1}{N\varepsilon}\sum_{j:j\neq i} \nabla_{x}V(\widetilde{x}_{i}(t)-\widetilde{x}_{j}(t)). 
\end{align*}
\end{proof}

\bigskip
\noindent{\bf Acknowledgments.}
IBP, GQC, and DF acknowledge support by EPSRC grant EP/V051121/1. GQC was also partially supported by EPSRC grant
EP/V008854. DF was also partially supported by the Fundamental Research Funds for the Central Universities No. 2233100021 and No. 2233300008. For the purpose of open access, the authors have applied a CC BY public copyright license to any Author Accepted Manuscript (AAM) version
arising from this submission.

\medskip
\noindent{\bf Conflict of Interest:} The authors declare that they have no conflict of interest.
The authors also declare that this manuscript has not been previously published,
and will not be submitted elsewhere before your decision.

\bigskip
\noindent{\bf Data availability:} Data sharing is not applicable to this article as no datasets were generated or analyzed during the current study.


\bigskip
\medskip
\begin{thebibliography}{}

\bibitem{bahouri2011fourier}
H. Bahouri, J.-Y. Chemin, and R. Danchin,
\newblock {\em Fourier Analysis and Nonlinear Partial Differential Equations},
\newblock Grundlehren der Mathematischen Wissenschaften, {\bf 343}, Springer, Heidelberg, 2011.

\bibitem{porat2023derivation}
I. Ben-Porat,
\newblock Derivation of Euler's equations of perfect fluids from von Neumann's equation with magnetic field,
\newblock {\it J. Stat. Phys.}, {\bf 190(7)} (2023), 121.

\bibitem{porat2025quantum}
I. Ben-Porat, G.-Q. Chen, and D. Yuan,
\newblock Quantum quasi-neutral limits and isothermal Euler equations,
\newblock {\it Nonlinearity}, {\bf 39(6)} (2026), 065005.

\bibitem{porat2025propagation}
I. Ben-Porat, A. Gagnebin, M. Iacobelli, and J. Junn{\'e},
\newblock Propagation of velocity moments for the magnetized Vlasov--Poisson system with space-time dependent magnetic fields,
\newblock Preprint arXiv:2510.22753, 2025.

\bibitem{ben2025derivation}
I. Ben-Porat, M. Iacobelli, and A. Rege,
\newblock Derivation of Yudovich solutions of incompressible Euler from the Vlasov--Poisson system,
\newblock {\it SIAM J. Math. Anal.}, {\bf 57(1)} (2025), 886--906.

\bibitem{bostan2023asymptotic}
M. Bostan and A.-T. Vu,
\newblock Asymptotic behavior of the two-dimensional Vlasov--Poisson--Fokker--Planck equation with a strong external magnetic field,
\newblock {\it Kinet. Relat. Models}, {\bf 18(1)} (2025), 101--147.

\bibitem{brenier2000convergence}
Y. Brenier,
\newblock Convergence of the Vlasov--Poisson system to the incompressible Euler equations,
\newblock {\it Commun. Partial Differ. Equs.}, {\bf 25(3--4)} (2000), 737--754.

\bibitem{brenier1994limite}
Y. Brenier and E. Grenier,
\newblock Limite singuli{\`e}re du syst{\`e}me de Vlasov--Poisson dans le r{\'e}gime de quasi neutralit{\'e}: le cas ind{\'e}pendant du temps,
\newblock {\it C. R. Acad. Sci. Paris S{\'e}r. I Math.}, {\bf 318(2)} (1994), 121--124.

\bibitem{brenier2003incompressible}
Y. Brenier, N. Mauser, and M. Puel,
\newblock Incompressible Euler and e-MHD as scaling limits of the Vlasov--Maxwell system,
\newblock {\it Commun. Math. Sci.}, {\bf 1(3)} (2003), 437--447.

\bibitem{bresch2020modulated}
D. Bresch, P.-E. Jabin, and Z. Wang,
\newblock Modulated free energy and mean field limit,
\newblock {\it S{\'e}minaire Laurent Schwartz---EDP et applications}, {\bf 2019--2020} (2020), Exp. No. 2, 1--22.

\bibitem{burby2025hamiltonian}
J. W. Burby, D. A. Kaltsas, P. J. Morrison, E. Tassi, and
G. N. Throumoulopoulos,
\newblock Hamiltonian formulation of the quasineutral
Vlasov--Poisson system,
\newblock Preprint arXiv:2506.21415, 2025.

\bibitem{chen2026vpfp}
L. Chen, J. Jung, P. Pickl, and Z. Wang,
\newblock On the mean-field limit of
Vlasov--Poisson--Fokker--Planck equations,
\newblock {\it J. Math. Phys.}, {\bf 67} (2026), 061503.

\bibitem{chen2015introduction}
F. F. Chen,
\newblock {\em Introduction to Plasma Physics and Controlled Fusion}, 3rd ed.,
\newblock Springer, Cham, 2016.

\bibitem{chen2025meanfield}
X. Chen, S. Shen, and Z. Zhang,
\newblock On the mean-field and semiclassical limit from quantum $N$-body dynamics,
\newblock {\it J. Funct. Anal.}, {\bf 289(10)} (2025), 111100.

\bibitem{gagnebin2025relativistic}
A. Gagnebin, M. Iacobelli, A. Rege, and S. Rossi,
\newblock From relativistic Vlasov--Maxwell to electron-MHD in the quasineutral regime,
\newblock Preprint arXiv:2505.11428, 2025.

\bibitem{gallagher2005pressureless}
I. Gallagher and L. Saint-Raymond,
\newblock On pressureless gases driven by a strong inhomogeneous magnetic field,
\newblock {\it SIAM J. Math. Anal.}, {\bf 36(4)} (2005), 1159--1176.

\bibitem{golse2003vlasov}
F. Golse and L. Saint-Raymond,
\newblock The Vlasov--Poisson system with strong magnetic field in quasineutral regime,
\newblock {\it Math. Models Methods Appl. Sci.}, {\bf 13(5)} (2003), 661--714.

\bibitem{grenier1996oscillations}
E. Grenier,
\newblock Oscillations in quasineutral plasmas,
\newblock {\it Commun. Partial Differ. Equs.},
{\bf 21(3--4)} (1996), 363--394.

\bibitem{grenier1998semiclassical}
E. Grenier,
\newblock Semiclassical limit of the nonlinear Schr{\"o}dinger equation in small time,
\newblock {\it Proc. Amer. Math. Soc.}, {\bf 126(2)} (1998), 523--530.

\bibitem{griffin2021recent}
M. Griffin-Pickering and M. Iacobelli,
\newblock Recent developments on quasineutral limits for Vlasov-type equations, in:
\newblock {\em Recent Advances in Kinetic Equations and Applications}, Springer INdAM Series, {\bf 48}, Springer, Cham, 2022, 211--231.

\bibitem{han2011quasineutral}
D. Han-Kwan,
\newblock Quasineutral limit of the Vlasov--Poisson system with massless electron,
\newblock {\it Commun. Partial Differ. Equs.}, {\bf 36(8)} (2011), 1385--1425.

\bibitem{han2014quasineutral}
D. Han-Kwan and M. Iacobelli,
\newblock The quasineutral limit of the Vlasov--Poisson equation in Wasserstein metric,
\newblock {\it Commun. Math. Sci.}, {\bf 15(2)} (2017), 481--509.

\bibitem{han2021newton}
D. Han-Kwan and M. Iacobelli,
\newblock From Newton's second law to Euler's equations of perfect fluids,
\newblock {\it Proc. Am. Math. Soc.}, {\bf 149(7)} (2021), 3045--3061.

\bibitem{iacobelli2026quasineutral}
M. Iacobelli,
\newblock Quasineutral plasmas and the geometry of kinetic stability,
\newblock Preprint arXiv:2605.28435, 2026.

\bibitem{jiang2026yukawa}
N. Jiang, Z. Qiao, J. Wu, and J. Zhang,
\newblock Commutator estimates uniform in the screening parameter
and mean-field limits for Yukawa interactions,
\newblock Preprint arXiv:2608.22038, 2026.

\bibitem{ju2026zero}
Q. Ju and C. Liu,
\newblock The zero-electron-mass limit in the Euler--Poisson system
with non-constant ion density for all time,
\newblock {\it Discrete Contin. Dyn. Syst. B},
{\bf 41} (2026), 1--23.

\bibitem{jungel2003convergence}
A. J{\"u}ngel and S. Wang,
\newblock Convergence of nonlinear Schr{\"o}dinger--Poisson systems to the compressible Euler equations,
\newblock {\it Commun. Partial Differ. Equs.}, {\bf 28(5--6)} (2003), 1005--1022.

\bibitem{lions1991propagation}
P.-L. Lions and B. Perthame,
\newblock Propagation of moments and regularity for the 3-dimensional Vlasov--Poisson system,
\newblock {\it Invent. Math.}, {\bf 105(1)} (1991), 415--430.
\bibitem{Peng2024} Y.J. Peng and  
C. Liu, 
\newblock
Global non-relativistic quasi-neutral limit for a two-fluid Euler-Maxwell system
\newblock {\it J. Differ. Equ.}, {\bf 385} (2024), 362--394.

\bibitem{masmoudi2001vlasov}
N. Masmoudi,
\newblock From Vlasov--Poisson system to the incompressible
Euler system,
\newblock {\it Commun. Partial Differ. Equs.},
{\bf 26(9--10)} (2001), 1913--1928.

\bibitem{Peng2017} Y.J. Peng and V. Wasiolek, 
\newblock
Global quasi-neutral limit of Euler–Maxwell systems with velocity dissipation
\newblock {\it J. Math. Anal. Appl.}, {\bf 451(1)} (2017), 146--174.

\bibitem{pu2016quasineutral}
X. Pu,
\newblock Quasineutral limit of the Euler--Poisson system
under strong magnetic fields,
\newblock {\it Discrete Contin. Dyn. Syst. Ser. S},
{\bf 9(6)} (2016), 2095--2111.

\bibitem{puel2002convergence}
M. Puel,
\newblock Convergence of the Schr{\"o}dinger--Poisson system to the incompressible Euler equations,
\newblock {\it Commun. Partial Differ. Equs.}, {\bf 27(11--12)} (2002), 2311--2331.

\bibitem{puel2004quasineutral}
M. Puel and L. Saint-Raymond,
\newblock Quasineutral limit for the relativistic Vlasov--Maxwell system,
\newblock {\it Asymptot. Anal.}, {\bf 40(3--4)} (2004), 303--352.

\bibitem{rege2025stability}
A. Rege,
\newblock Stability estimates for magnetized Vlasov equations,
\newblock {\it J. Differ. Equ.}, {\bf 425} (2025), 763--788.

\bibitem{rosenzweig2021quantum}
M. Rosenzweig,
\newblock From quantum many-body systems to ideal fluids,
\newblock Preprint arXiv:2110.04195, 2021.

\bibitem{rosenzweig2023rigorous}
M. Rosenzweig,
\newblock On the rigorous derivation of the incompressible Euler equation from Newton's second law,
\newblock {\it Lett. Math. Phys.}, {\bf 113} (2023), 13.

\bibitem{rosenzweig2025commutators}
M. Rosenzweig,
\newblock Commutators, mean-field, and supercritical mean-field limits for Coulomb/Riesz gases,
\newblock {\it Journ{\'e}es {\'e}quations aux d{\'e}riv{\'e}es partielles} (2025), Talk No. 7, 1--32.

\bibitem{rosenzweig2025lake}
M. Rosenzweig and S. Serfaty,
\newblock The lake equation as a supercritical mean-field limit,
\newblock {\it J. {\'E}c. polytech. Math.}, {\bf 12} (2025), 1019--1068.

\bibitem{duerinckx2020mean}
S. Serfaty,
\newblock Mean field limit for Coulomb-type flows (Appendix with M. Duerinckx),
\newblock {\it Duke Math. J.}, {\bf 169(15)} (2020), 2887--2935.

\end{thebibliography}
\end{document}